\documentclass[11pt]{article}
\usepackage{IEEEtrantools}
\usepackage{latexsym}
\usepackage{amsfonts}
\usepackage{amssymb}
\usepackage{amsmath,amsfonts,amsthm,bbm,mathtools,bm}
\usepackage{mathrsfs}
\usepackage{enumerate}
\usepackage{dsfont}
\usepackage{makecell}
\usepackage{tikz-cd}

\usepackage{multirow}
\usepackage{tikz}
\usetikzlibrary{arrows,decorations.markings}
\usepackage{bookmark}
\usepackage{hyperref}
\hypersetup{
	colorlinks   = true,
	citecolor    = blue
}

\newcommand{\bbbt}{\mathbb{T}}

\newcommand{\etale}{étalé}
\newcommand{\be}{\begin{equation}}
	\newcommand{\ee}{\end{equation}}
\newcommand{\bea}{\begin{eqnarray}}
	\newcommand{\eea}{\end{eqnarray}}
\newcommand{\bean}{\begin{eqnarray*}}
	\newcommand{\eean}{\end{eqnarray*}}
\newcommand{\brray}{\begin{array}}
	\newcommand{\erray}{\end{array}}
\newcommand{\biearray}{\begin{IEEEarray}{rCl}}
	\newcommand{\eiearray}{\end{IEEEarray}}

\newtheorem{dfn}{Definition}[section]
\newtheorem{thm}[dfn]{Theorem}
\newtheorem{lmma}[dfn]{Lemma}
\newtheorem{ppsn}[dfn]{Proposition}
\newtheorem{crlre}[dfn]{Corollary}
\newtheorem{xmpl}[dfn]{Example}
\newtheorem{rmrk}[dfn]{Remark}

\newcommand{\bdfn}{\begin{dfn}\rm}
	\newcommand{\bthm}{\begin{thm}}
		\newcommand{\blmma}{\begin{lmma}}
			\newcommand{\bppsn}{\begin{ppsn}}
				\newcommand{\bcrlre}{\begin{crlre}}
					\newcommand{\bxmpl}{\begin{xmpl}}
						\newcommand{\brmrk}{\begin{rmrk}\rm}
							
							\newcommand{\edfn}{\end{dfn}}
						\newcommand{\ethm}{\end{thm}}
					\newcommand{\elmma}{\end{lmma}}
				\newcommand{\eppsn}{\end{ppsn}}
			\newcommand{\ecrlre}{\end{crlre}}
		\newcommand{\exmpl}{\end{xmpl}}
	\newcommand{\ermrk}{\end{rmrk}}

\newcommand{\bbc}{\mathbb{C}}
\newcommand{\bbz}{\mathbb{Z}}
\newcommand{\bbn}{\mathbb{N}}

\newcommand{\bbone}{\mathbbm{1}}

\newcommand{\scrh}{\mathscr{H}}

\newcommand{\clh}{\mathcal{H}}

\newcommand{\clk}{\mathcal{K}}
\newcommand{\cll}{\mathcal{L}}

\newcommand{\clg}{\mathcal{G}}

\begin{document}

	
	\author{\sc{Shreema Subhash Bhatt, Vinay Deshpande and Bipul Saurabh}}
	\title{$C(SO_q(4)/SO_q(2))$ as a Groupoid $C^*$-algebra}
	\maketitle
	
	\begin{abstract}
		In this paper, we prove that $C(SO_q(4)/SO_q(2))$ is isomorphic to the $C^*$-algebra of the tight groupoid $\mathcal{G}_{\mathrm{tight}}$ associated with the inverse semigroup generated by the standard generators of its crystal limit $C(SO_0(4)/SO_0(2))$. We show that all four orbits of the unit space $\mathcal{G}_{\mathrm{tight}}^{(0)}$ under the natural action of $\mathcal{G}_{\mathrm{tight}}$ are locally closed, and that the associated isotropy groups are isomorphic to $\mathbb{Z}$. Consequently, every irreducible representation of $C^*(\mathcal{G}_{\mathrm{tight}})$ is induced from an irreducible representation of $C^*(\mathbb{Z})$, which are parametrized by $\mathbb{T}$. In this way, we obtain four families of irreducible representations parametrized by $\mathbb{T}$, and we explicitly construct their equivalence with the corresponding Soibelman irreducible representations of $C(SO_q(4)/SO_q(2))$.
	\end{abstract}
	
	\bigskip

	{\bf AMS Subject Classification No.:} {\large 22}A{\large 22},  {\large 46}L{\large 05},
	{\large 46}L{\large 67}. \\

	{\bf Keywords.}  Groupoid $C^*$-algebras,  induced representations,  isotropy groups, amenability.
	\bigskip
	\section{Introduction}
	The construction of the $q$-deformation $G_q$ of a semisimple, simply connected compact Lie group $G$ \cite{KorSoi-1998a}, together with its quotient spaces, provides a rich source of interesting noncommutative spaces that one would naturally like to place within Connes framework of noncommutative geometry \cite{Con-1994a}. To this end, understanding the underlying $C^*$-algebras is crucial; however, many basic questions concerning their topological structure remain unanswered. For instance, although their $K$-groups agree with those of the classical analogues due to $KK$-equivalence, there is still no explicit description of the generators of these groups. Moreover, the question of $q$-invariance of these quotient spaces is settled only in a handful of cases.  One of the main obstacles is that, in the faithful Soibelman representation, the images of the generators of the underlying $C^*$-algebra involve sums of operators, which are difficult to analyze directly.
	One possible way to address this difficulty is to realize the underlying $C^*$-algebras as $C^*$-algebras of combinatorial objects such as directed graphs, groupoids, or inverse semigroups. In many cases, it is easier to study these combinatorial models first, and then use them to understand the structural and topological properties of the associated $C^*$-algebras. This approach has been used successfully in several situations.
	In \cite{HonSzy-2002aa}, Hong and Szymański showed that various quantum spaces, including quantum real projective spaces and quantum complex projective spaces, can be realized as graph $C^*$-algebras. Sheu employed groupoid techniques in \cite{She-1997a} to analyze the structure of $C(SU_q(n))$, and in \cite{She-1997b} proved that the quotient spaces 
	$C(S_q^{2n-1}) := C(SU_q(n)/SU_q(n-1))$ 
	are groupoid $C^*$-algebras.
	An alternative proof of Sheu's work was given by Sundar in \cite{Sun-2013a}, where he used the theory of inverse semigroups developed in \cite{Exe-2008a} to construct the groupoid  associated with $C(S_{q}^{2n-1})$, and proved that the groupoid is isomorphic to Sheu's groupoid obtained in \cite{She-1997b}.

	Let us denote by $B_n^{n-1}$, $C_n^{n-1}$, and $D_n^{n-1}$ the families of quantum homogeneous quotient spaces
	$
	SO_q(2n+1)/SO_q(2n-1),
	SP_q(n)/SP_q(n-1), 
	SO_q(2n)/SO_q(2n-2),
	$
	respectively. 
	A careful analysis of the diagram of the Soibelman faithful representation associated with the longest Weyl word shows a common structural feature in all three families. In each case, the diagram contains a fundamental building block at its core, and the pattern by which the diagram expands on either side of this block is identical across the families $B_n^{n-1}$, $C_n^{n-1}$, and $D_n^{n-1}$. 
	Thus, in order to understand these quotient spaces, it is essential to analyze the underlying basic building blocks. These blocks arise from elementary representations corresponding either to the simple reflection $s_n$ alone or to the pair $s_n$ and $s_{n-1}$, where $n$ denotes the rank. Consequently, one must first examine the cases of small rank, where only these fundamental components appear.
	For type $C$, the $C^*$-algebra $C(H_q^1)$ associated with the basic block is isomorphic to $C(SU_q(2))$. In \cite{Sau-2017b}, it was proved that 
	\[
	C(SP_q(n)/SP_q(n-1)) \cong \Sigma^{2(n-1)} C(SU_q(2)).
	\]
	For type $B$, the $C^*$-algebra $C(SO_q(3))$ was studied in \cite{Lan-1998aa}, and in \cite{BhuBisSau-2024aaa} it was shown that \[ C(SO_q(3)) \cong \Sigma_2^2 C(\mathbb{T}), \] where $\Sigma_2^2$ denotes the quantum $2$-torsioned double suspension. More recently, it has been proven in \cite{Sau-2026a} that \[ C\big(SO_q(2n+1)/SO_q(2n-1)\big) \cong \Sigma^{2(n-1)} \, \Sigma_2^2 \, \Sigma^{2(n-1)} C(\mathbb{T}). \] 
	This naturally raises the following question: does an analogous result hold for the type $D$ family $D_n^{n-1}$? In the present article, we initiate the study of this problem by analyzing the basic building block of the type $D$ family, namely, $C\big(SO_q(4)/SO_q(2)\big)$. The higher-rank cases will be addressed in a subsequent work.
	
	In this paragraph, we briefly summarize the main results obtained. Since $C(SO_{q}(4)/SO_{q}(2))$ and $C(SO_{0}(4)/SO_{0}(2))$ are isomorphic (see \cite{BhuBisSau-2024aaa}), we work with the standard generators of $C(SO_{0}(4)/SO_{0}(2))$, which are partial isometries. The collection of all words formed by these partial isometries constitutes an inverse semigroup. Using Exel's approach, we explicitly construct a groupoid $\mathcal{G}_{\mathrm{tight}}$ generated by this inverse semigroup.
	We show that the groupoid $\mathcal{G}_{\mathrm{tight}}$ is locally compact, Hausdorff, \'etale, and amenable, and that it admits a left Haar system consisting of counting measures. Moreover, its unit space is $\overline{\mathbb{N}}^{2}$, where $\overline{\mathbb{N}}$ denotes the one-point compactification of $\mathbb{N}$. The action of $\mathcal{G}_{\mathrm{tight}}$ on its unit space decomposes it into four locally closed orbits: $\{(\infty,\infty)\}$, $\mathbb{N}\times\{\infty\}$, $\{\infty\}\times\mathbb{N}$, and $\mathbb{N}\times\mathbb{N}$. The isotropy group at each unit turns out to be isomorphic to $\mathbb{Z}$.
	We establish a faithful isomorphism between $C^{*}(\mathcal{G}_{\mathrm{tight}})$ and $C(SO_{q}(4)/SO_{q}(2))$. Since the orbits of the unit space are locally closed, the irreducible representations of $C^*(\mathcal{G}_{\mathrm{tight}})$ can be induced from its isotropy groups $\mathbb{Z}$. The representations of $C^*(\mathbb{Z})$ are parametrized by $\mathbb{T}$; hence, we obtain four families of irreducible representations of $C^{*}(\mathcal{G}_{\mathrm{tight}})$, parametrized by $\mathbb{T}$. Finally, we provide an explicit equivalence between these irreducible representations and those obtained in \cite{BhuBisSau-2024aaa}.
	
	Here is how this article is organized.  In Section \ref{preliminaries}, we recall the preliminaries on groupoids and inverse semigroups, and the process of constructing a tight groupoid from an inverse semigroup. We also explain groupoid $C^*$-algebras and the representations induced from their isotropy groups. In Section \ref{sect_gpoid}, we construct the groupoid $\mathcal{G}_{\mathrm{tight}}$ associated with the inverse semigroup $T$ of $C(SO_{0}(4)/SO_{0}(2))$.  Section \ref{sect_gpoid_prop} deals with the properties of the groupoid $\mathcal{G}_{\mathrm{tight}}$. The $C^*$-algebra $C^*(\clg_{\text {tight}})$ is shown to be isomorphic to $C(SO_{0}(4)/SO_{0}(2))$ in Section \ref{sect_isomism}. In Section \ref{sect_rep}, we give an explicit description of the irreducible representations of $C^*(\clg_{\text {tight}})$ induced from its isotropy groups.
	
	A word about notations: $\mathbb{N}$ is the set of all non-negative integers, and $\mathbb{Z}$ is the set of all integers. Let $\overline{\bbn}$ be the one point compactification of $\bbn$. For a discrete space $X$, let $\left\{e_x:x\in X\right\}$ denote the standard orthonormal basis for $\ell^2(X)$. The symbols $S$ and $\bm{t}$ denote the left shift on $\ell^{2}(\mathbb{N})$ and $\ell^{2}(\mathbb{Z})$, respectively. The symbol $p_{ij}$ denotes the map from $e_{i}$ to $e_{j}$, and the projection $p_{ii}$ will be denoted by $p_{i}$, while the projection $p_{00}$ will be denoted by $p$. The space of bounded operators on the Hilbert space $\scrh$ will be denoted by $\cll(\scrh)$. The symbol $1_{A}$ stands for the characteristic function on $A$. The symbol $X\overset{\phi}\cong Y$ stands for a bijection $\phi$ from the space $X$ to the space $Y$. 	
	
	\section{Preliminaries}\label{preliminaries}
	In this section, we recall the definitions of inverse semigroups, groupoids, and groupoid \( C^{*} \)-algebras from \cite{Pat-1998a, Ren-1980a}. We recall the construction of a groupoid from an inverse semigroup as described in \cite{Sun-2013a}. Further, we recollect from \cite{Wil-2019a}, the construction of the induced representations of the $C^{*}$-algebra of a groupoid from the representations of $C^{*}$-algebra of its isotropy groups.	
	\subsection{Groupoid $C^*$-algebras}	
	\begin{dfn}
		A \emph{groupoid} is a non-empty set \(\mathcal{G}\) equipped with
		\begin{itemize}
			\item a subset \(\mathcal{G}^{2}\subseteq \mathcal{G}\times \mathcal{G}\) of composable pairs,
			\item a multiplication map \(\mathcal{G}^{2}\to \mathcal{G};\,(a,b)\mapsto ab\),
			\item an inverse map \(\mathcal{G}\to \mathcal{G};\,a\mapsto a^{-1}\),
		\end{itemize}
		such that:
		\begin{enumerate}[(i)]
			\item \textbf{Associativity:} If \((a,b),(b,c)\in\mathcal{G}^{2}\), then \((ab,c)$ and $(a,bc)\in\mathcal{G}^{2}\), and \((ab)c=a(bc)\).
			\item \textbf{Involution:} For any \(a\in\mathcal{G}\),\((a^{-1})^{-1}=a\) and  \((a^{-1},a),(a,a^{-1})\in\mathcal{G}^{2}\).
			\item \textbf{Identities:} If \((a,b)\in\mathcal{G}^{2}\), then \(abb^{-1}=a\) and \(a^{-1}ab=b\).
		\end{enumerate}
		A groupoid \(\mathcal{G}\) is a \emph{topological groupoid} if \(\mathcal{G}\) is a topological space and the multiplication and inverse maps are continuous.
	\end{dfn}
	
	\begin{dfn}
		Given a groupoid \(\mathcal{G}\), the \emph{unit space} of \(\mathcal{G}\) is defined as
		\[
		\mathcal{G}^{0}=\{aa^{-1}:a\in\mathcal{G}\}.
		\]
	\end{dfn}
	
	The \emph{range} and \emph{source} maps are defined by
	\[
	r:\mathcal{G}\to\mathcal{G}^{0},\quad a\mapsto aa^{-1}
	\qquad\mbox{and}\quad
	s:\mathcal{G}\to\mathcal{G}^{0},\quad a\mapsto a^{-1}a.
	\]
	
	\begin{dfn}
		A locally compact Hausdorff groupoid \(\mathcal{G}\) is called \emph{r-discrete} if \(\mathcal{G}^{0}\) is open in \(\mathcal{G}\). A locally compact groupoid is called \emph{étale} if the range map is a local homeomorphism.
	\end{dfn}

	\begin{dfn}
		Let $\mathcal{G}$ be a groupoid and let $T \subset \mathcal{G}^{0}$.  
		The \emph{reduction groupoid} $\mathcal{G}_{T}$ is defined by
		\[
		\mathcal{G}_{T} = \{\gamma \in \mathcal{G} : r(\gamma), s(\gamma) \in T\}.
		\]
		The unit space of $\mathcal{G}_{T}$ is $T$.
	\end{dfn}

	\begin{dfn}
		For $u \in \mathcal{G}^{0}$, define:
		\begin{enumerate}[(i)]
			\item $\mathcal{G}^{u} = r^{-1}(u) = \{\gamma \in \mathcal{G} : r(\gamma) = u\}$,
			\item $\mathcal{G}_{u} = s^{-1}(u) = \{\gamma \in \mathcal{G} : s(\gamma) = u\}$,
			\item $\mathcal{G}_{u}^{u} = \mathcal{G}^{u} \cap \mathcal{G}_{u}$, called the \emph{isotropy group at $u$}.
		\end{enumerate}
	\end{dfn}

	\begin{dfn}
		A \emph{Haar system} on a locally compact Hausdorff groupoid $\mathcal{G}$
		is a family of Radon measures $\{\mu^{u}\}_{u \in \mathcal{G}^{0}}$ such that:
		\begin{enumerate}[(i)]
			\item $\operatorname{supp}(\mu^{u}) = \mathcal{G}^{u}$,
			\item for every $f \in C_{c}(\mathcal{G})$, the map  
			\[
			u \mapsto \int_{\mathcal{G}} f(\gamma)\, d\mu^{u}(\gamma)
			\]
			is continuous,
			\item for every $\eta \in \mathcal{G}$ and every $f \in C_{c}(\mathcal{G})$,
			\[
			\int_{\mathcal{G}} f(\gamma)\, d\mu^{r(\eta)}(\gamma)
			= \int_{\mathcal{G}} f(\eta\gamma)\, d\mu^{s(\eta)}(\gamma).
			\]
		\end{enumerate}
	\end{dfn}

	\noindent
	Let $\mathcal{G}$ be a locally compact Hausdorff groupoid equipped with a Haar system
	$\{\mu^{u}\}_{u\in \mathcal{G}^{0}}$.
	The *-algebra $C_{c}(\mathcal{G})$ of compactly supported continuous complex-valued functions
	on $\mathcal{G}$ is equipped with convolution and involution
	\[
	(f * g)(\gamma)
	= \int_{\mathcal{G}} f(\eta)\, g(\eta^{-1}\gamma)\, d\mu^{r(\eta)}(\eta)\,\,\mbox{and}\,\, f^{*}(\gamma) = \overline{f(\gamma^{-1})},
	\]
	respectively. For $f \in C_{c}(\mathcal{G})$, define the norms
	\[
	\|f\|_{I,r}
	= \sup_{u \in \mathcal{G}^{0}}
	\int_{\mathcal{G}} |f(\gamma)|\, d\mu^{u}(\gamma),\,
	\|f\|_{I,s}
	= \sup_{u \in \mathcal{G}^{0}}
	\int_{\mathcal{G}} |f(\gamma^{-1})|,\, d\mu^{u}(\gamma),\|f\|_{I} = \max \{\|f\|_{I,r},\|f\|_{I,s}\}.
	\]

	\begin{dfn}
		A $*$-homomorphism
		\[
		\pi : C_{c}(\mathcal{G}) \to \mathcal{B}(\scrh_{\pi})
		\]
		is called an \emph{$I$-norm bounded representation} if  
		\[
		\|\pi(f)\|_{\mathcal{B}(\scrh_{\pi})} = \|f\|_{\pi} \le \|f\|_{I}
		\quad \text{for all } f \in C_{c}(\mathcal{G}).
		\]
	\end{dfn}

	The \emph{full norm} on $C_{c}(\mathcal{G})$ is defined by
	\[
	\|f\| = \sup \{\|\pi(f)\| :
	\pi \text{ is an $I$-norm bounded representation}\}.
	\]

	\begin{dfn}
		The \emph{groupoid $C^{*}$-algebra} $C^{*}(\mathcal{G})$ is the completion
		of $C_{c}(\mathcal{G})$ with respect to the full norm $\|\cdot\|$.
	\end{dfn}

	\noindent We now define a class of $I$-norm bounded representations, denoted by
	$\operatorname{Ind}_{\delta_u}$, and use them to construct a subalgebra of
	$C^{*}(\mathcal{G})$ called the \emph{reduced groupoid $C^{*}$-algebra}
	$C_{\mathrm{red}}^{*}(\mathcal{G})$.

	\begin{dfn}\label{inddefn}
		Let $\mu$ be a Radon measure on $\mathcal{G}^{0}$.  
		Define measures $\nu$ and $\nu^{-1}$ on $\mathcal{G}$ by
		\[
		\int_{\mathcal{G}} f \, d\nu
		:= \int_{\mathcal{G}^{0}} \left( \int_{\mathcal{G}^{u}}
		f(\gamma)\, d\lambda^{u}(\gamma) \right) d\mu(u),
		\qquad
		\int_{\mathcal{G}} f \, d\nu^{-1}
		:= \int_{\mathcal{G}} f(\gamma^{-1})\, d\nu(\gamma).
		\]
		
		Define a $*$-homomorphism
		$
		\operatorname{Ind}_{\mu} : C_{c}(\mathcal{G})
		\longrightarrow B\big(L^{2}(\mathcal{G},\nu^{-1})\big)
		$
		as 
		\[
		\operatorname{Ind}_{\mu}(f)(\xi)(x)
		:= \int_{\mathcal{G}} f(y)\,\xi(y^{-1}x)\, d\lambda^{r(x)}(y),
		\qquad
		f \in C_{c}(\mathcal{G}),\ \xi \in L^{2}(\mathcal{G},\nu^{-1}).
		\]
		Then $\operatorname{Ind}_{\mu}$ is an $I$-norm bounded representation.
		
		\noindent For the point mass measure $\delta_{u}$ on $\mathcal{G}^{0}$, we obtain a
		representation
		$
		\operatorname{Ind}_{\delta_{u}}
		: C_{c}(\mathcal{G}) \longrightarrow B\big(L^{2}(\mathcal{G}_{u},\nu^{-1})\big)
		$
		given by
		\[
		\operatorname{Ind}_{\delta_{u}}(f)(\xi)(x)
		= \int_{\mathcal{G}} f(y)\,\xi(y^{-1}x)\, d\lambda^{r(x)}(y),
		\qquad
		\xi \in L^{2}(\mathcal{G}_{u},\nu^{-1}).
		\]
		
		\noindent If $\mathcal{G}$ is étale, the integral reduces to the following sum,
		\begin{equation}\label{discrete ind}
			\operatorname{Ind}_{\delta_{u}}(f)(\xi)(x)
			= \sum_{y \in \mathcal{G}^{r(x)}} f(y)\,\xi(y^{-1}x),
			\qquad
			\xi \in \ell^{2}(\mathcal{G}_{u}).
		\end{equation}
		
		\noindent The \emph{reduced norm} on $C_{c}(\mathcal{G})$ is defined by
		\[
		\|f\|_{\mathrm{r}}
		= \sup\big\{\,\|\operatorname{Ind}_{\delta_{u}}(f)\| :
		u \in \mathcal{G}^{0}\,\big\}.
		\]
		The completion of $C_{c}(\mathcal{G})$ with respect to the reduced norm is called the
		\emph{reduced groupoid $C^{*}$-algebra} and is denoted by
		$C_{\mathrm{red}}^{*}(\mathcal{G})$.
	\end{dfn}

	\begin{dfn}
		A locally compact groupoid $\mathcal{G}$ with Haar system
		$\{\lambda^{u}\}_{u \in \mathcal{G}^{0}}$ is called \emph{topologically amenable}
		if there exists a net $\{f_{i}\} \subset C_{c}(\mathcal{G})$ such that:
		\begin{enumerate}[(i)]
			\item the functions
			\[
			u \longmapsto \int_{\mathcal{G}} |f_{i}(\gamma)|^{2}\, d\lambda^{u}(\gamma)
			= (f_{i} * f_{i}^{*})(u)
			\]
			are uniformly bounded on $\mathcal{G}^{0}$,
			
			\item $f_{i} * f_{i}^{*} \longrightarrow 1$ uniformly on compact subsets of
			$\mathcal{G}$.
		\end{enumerate}
	\end{dfn}
	\noindent We will need the following results from \cite{Wil-2019a}.

	\begin{ppsn}\label{faithful}
		Let $U$ be the largest open invariant subset of $\mathcal{G}^{0}$ such that
		$\mu(U)=0$.  
		If $U = \emptyset$, then $\operatorname{Ind}_{\mu}$ is faithful on
		$C_{\mathrm{red}}^{*}(\mathcal{G})$.
	\end{ppsn}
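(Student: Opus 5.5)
The plan is to show that $\ker(\operatorname{Ind}_\mu)$ is trivial in $C^*_{\mathrm{red}}(\mathcal{G})$, using the fact that the reduced norm is the supremum over the representations $\operatorname{Ind}_{\delta_u}$. I will also use the standard fact that $\operatorname{Ind}_\mu$ is weakly equivalent to the direct sum of the $\operatorname{Ind}_{\delta_u}$ over $u$ in the support of $\mu$. Concretely, I will first show
\[
\|\operatorname{Ind}_{\mu}(f)\| = \sup\{\|\operatorname{Ind}_{\delta_u}(f)\| : u \in \operatorname{supp}\mu\} \quad (f\in C_c(\mathcal{G})).
\]
Then I will show that the right-hand side equals $\|f\|_{\mathrm r}$ when $U=\emptyset$.

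\textbf{Step 1: disintegration.} Disintegrate $\nu^{-1}$ along the source map. For $\nu^{-1}$-a.e. $x$ we have $s(x)=u$, and the measure $\nu^{-1}$ is $\int \lambda_u\,d\mu(u)$ with $\lambda_u$ the image of $\lambda^u$ under inversion. Hence
\[
L^2(\mathcal{G},\nu^{-1})\cong \int^{\oplus} L^2(\mathcal{G}_u,\lambda_u)\,d\mu(u),
\]
and the formula for $\operatorname{Ind}_\mu(f)$ only involves $y$ with $s(y)=r(x)$. So $y^{-1}x$ has the same source as $x$, and $\operatorname{Ind}_\mu(f)$ decomposes as $\int^\oplus \operatorname{Ind}_{\delta_u}(f)\,d\mu(u)$. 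The norm of a direct integral is the $\mu$-essential supremum of the fibre norms.

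\textbf{Step 2: from essential supremum to supremum over the support.} Using continuity of the Haar system, the map $u\mapsto\|\operatorname{Ind}_{\delta_u}(f)\|$ is lower semicontinuous. Lower semicontinuity follows because the matrix coefficients $\langle \operatorname{Ind}_{\delta_u}(f)\xi,\eta\rangle$ can be approximated by $\langle\operatorname{Ind}_{\delta_u}(f)\,\xi_u,\eta_u\rangle$ with $\xi_u$ obtained by restricting fixed functions in $C_c(\mathcal{G})$ to $\mathcal{G}_u$. Therefore the set where the norm exceeds $c$ is open, and if it meets $\operatorname{supp}\mu$ it has positive measure. This gives $\operatorname{ess\,sup} = \sup_{\operatorname{supp}\mu}$.

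\textbf{Step 3: use invariance.} The hypothesis $U=\emptyset$ means that every nonempty open invariant set has positive $\mu$-measure. Equivalently, the saturation $[\operatorname{supp}\mu]$ is dense in $\mathcal{G}^0$: otherwise the complement of its closure is an open invariant set of measure zero. If $v=r(\gamma)$ with $s(\gamma)=u$, right multiplication by $\gamma$ gives a unitary $\ell^2(\mathcal{G}_{v})\to \ell^2(\mathcal{G}_u)$ (or $L^2$ with the transported measures, by left invariance of the Haar system). This unitary intertwines $\operatorname{Ind}_{\delta_v}$ and $\operatorname{Ind}_{\delta_u}$, so the norm is constant on orbits. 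By lower semicontinuity and density of the saturation, $\sup_{u\in\mathcal{G}^0}\|\operatorname{Ind}_{\delta_u}(f)\|$ is attained approximately on $[\operatorname{supp}\mu]$, hence on $\operatorname{supp}\mu$. Thus $\|\operatorname{Ind}_\mu(f)\|=\|f\|_{\mathrm r}$, so $\operatorname{Ind}_\mu$ is isometric on $C_c(\mathcal{G})$ for the reduced norm and extends to a faithful representation of $C^*_{\mathrm{red}}(\mathcal{G})$.

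The main obstacle is Step 2, the lower semicontinuity of $u\mapsto\|\operatorname{Ind}_{\delta_u}(f)\|$ in the non-étale setting. In the étale case it is simpler, since vectors supported on a bisection vary continuously with the unit. In the étale case I would first try to prove it using bisections and partitions of unity, and then appeal to \cite{Wil-2019a} for the general measurable case.
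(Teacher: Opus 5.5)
Your proposal is correct. The paper gives no proof of its own here; it imports the proposition directly from \cite{Wil-2019a}. Your route is the standard one: decompose $\operatorname{Ind}_\mu$ as $\int^\oplus \operatorname{Ind}_{\delta_u}\,d\mu(u)$ over the source fibres, use lower semicontinuity of $u\mapsto\|\operatorname{Ind}_{\delta_u}(f)\|$ to replace the essential supremum by the supremum over $\operatorname{supp}\mu$, and then use orbit-invariance of that norm together with density of the saturation of $\operatorname{supp}\mu$, which is exactly what $U=\emptyset$ provides.

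Small remarks:
\begin{itemize}
\item In Step 1 the integration variable satisfies $r(y)=r(x)$, not $s(y)=r(x)$.
\item In Step 3, the invariance of $\mathcal{G}^0\setminus\overline{[\operatorname{supp}\mu]}$ uses that $r$ and $s$ are open maps. This holds automatically because $\mathcal{G}$ has a Haar system.
\item The lower semicontinuity you flag as the main obstacle is already fully settled by your Step 2 sketch, without étaleness or bisections. Any $\xi,\eta\in C_c(\mathcal{G}_u)$ extend to $C_c(\mathcal{G})$. Then
\[
\langle \operatorname{Ind}_{\delta_v}(f)\,\xi|_{\mathcal{G}_v},\eta|_{\mathcal{G}_v}\rangle=(\eta^* * f*\xi)(v)
\quad\text{and}\quad
\|\xi|_{\mathcal{G}_v}\|^2=(\xi^* * \xi)(v)
\]
are both continuous in $v$.
\end{itemize}
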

	\begin{thm} 
		If a groupoid $\mathcal{G}$ is topologically amenable, then the reduced norm on
		$C_{c}(\mathcal{G})$ coincides with the universal $C^{*}$-norm.
	\end{thm}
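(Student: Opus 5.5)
This is the standard fact that for a topologically amenable groupoid the canonical surjection $C^*(\mathcal{G})\to C^*_{\mathrm{red}}(\mathcal{G})$ is injective. The plan is to show $\|f\|\le\|f\|_r$ for every $f\in C_c(\mathcal{G})$. The reverse inequality $\|f\|_r\le\|f\|$ is immediate, because each $\operatorname{Ind}_{\delta_u}$ is an $I$-norm bounded representation, as noted in Definition \ref{inddefn}.

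Fix an $I$-norm bounded representation $\pi$ of $C_c(\mathcal{G})$ on $\scrh_\pi$. By Renault's disintegration theorem, $\pi$ is the integrated form of a unitary representation $(\mu,\mathcal{G}^0*\scrh,L)$. Here $\mu$ is a quasi-invariant measure on $\mathcal{G}^0$, $\scrh$ is a Borel Hilbert bundle, and $L$ is a $\mathcal{G}$-action by unitaries. I will then use the amenability net $\{f_i\}$ to build approximately intertwining isometries
\[
V_i:\scrh_\pi\to L^2(\mathcal{G},\nu^{-1})\otimes_{\mathcal{G}^0}\scrh ,
\]
roughly $(V_i\xi)(\gamma)=f_i(\gamma)\,L_\gamma\xi(s(\gamma))$, suitably normalized. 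This is a Fell-absorption type construction: the target space carries the representation $\operatorname{Ind}_\mu\otimes 1$, which is a direct integral of multiples of the regular representations $\operatorname{Ind}_{\delta_u}$. Its norm is therefore dominated by $\|\cdot\|_r$, since $\operatorname{Ind}_\mu$ is a direct integral over $u$ of the $\operatorname{Ind}_{\delta_u}$.

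The key computation is the matrix coefficient identity
\[
\langle (\operatorname{Ind}_\mu\otimes1)(f)V_i\xi,V_i\eta\rangle=\int f(\gamma)\,(f_i*f_i^*)(\gamma)\,\langle L_\gamma\xi(s\gamma),\eta(r\gamma)\rangle\,d\nu_0(\gamma).
\]
Here $\nu_0$ is the symmetrized measure. Conditions (i) and (ii) of topological amenability give two things. Uniform boundedness makes the $V_i$ uniformly bounded, and after normalizing, $V_i^*V_i\to 1$. Uniform convergence $f_i*f_i^*\to1$ on the compact set $\operatorname{supp}f$, together with dominated convergence, gives $V_i^*(\operatorname{Ind}_\mu\otimes1)(f)V_i\to\pi(f)$ weakly. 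Since the weak limit of compressions is bounded by their norms, we get $\|\pi(f)\|\le\limsup\|(\operatorname{Ind}_\mu\otimes1)(f)\|\le\|f\|_r$. Taking the supremum over $\pi$ yields $\|f\|\le\|f\|_r$.

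I expect the main obstacle to be the measure-theoretic bookkeeping. One part is invoking the disintegration theorem and handling the modular function of the quasi-invariant measure $\mu$ correctly in the definition of $V_i$. The other is verifying that $\operatorname{Ind}_\mu\otimes 1$ is really bounded by the reduced norm. In the étale case relevant to this paper the Haar system consists of counting measures, so sums replace integrals and these checks become elementary. The general statement itself is a theorem we would cite from \cite{Wil-2019a}; the outline above is the argument I would reproduce if needed.
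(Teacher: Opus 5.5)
The paper gives no argument for this statement. It is quoted from \cite{Wil-2019a} and used as a black box, so there is no proof in the paper to compare yours with. Your outline is the standard proof behind that citation: disintegrate an arbitrary $I$-norm bounded representation as $(\mu,\mathcal{G}^0*\scrh,L)$, then use the amenability net to show it is weakly contained in a direct integral of the $\operatorname{Ind}_{\delta_u}$. It goes through, but the definition of $V_i$ needs correcting.

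As written, $(V_i\xi)(\gamma)=f_i(\gamma)L_\gamma\xi(s(\gamma))$ takes values in $\scrh(r(\gamma))$. However, $\operatorname{Ind}_\mu$ acts by left convolution, which moves $\gamma$ within $\mathcal{G}_{s(\gamma)}$. So the target must be the $\nu^{-1}$-square-integrable sections of $s^*\scrh$, i.e. $\int^\oplus_{\mathcal{G}^0}L^2(\mathcal{G}_u,\lambda_u)\otimes\scrh(u)\,d\mu(u)$, and you should set
\[
(V_i\xi)(\gamma)=\overline{f_i(\gamma)}\,\Delta(\gamma)^{1/2}\,L_\gamma^{-1}\xi(r(\gamma)),\qquad \Delta=d\nu/d\nu^{-1}.
\]
With this choice,
\[
\|V_i\xi\|^2=\int_{\mathcal{G}^0}(f_i*f_i^*)(u)\,\|\xi(u)\|^2\,d\mu(u),
\]
so $V_i^*V_i$ is multiplication by $(f_i*f_i^*)|_{\mathcal{G}^0}$. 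Condition (i) bounds these operators uniformly. Condition (ii), together with inner regularity of $\mu$, gives $V_i^*V_i\to 1$ strongly, so no normalization is needed.

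Your matrix-coefficient identity then holds exactly with $d\nu_0=\Delta^{-1/2}d\nu$, using that $\Delta$ may be taken to be a homomorphism. Convergence to $\langle\pi(f)\xi,\eta\rangle$ follows from uniform convergence of $f_i*f_i^*$ on $\operatorname{supp}f$ alone, with no dominated convergence argument. The reason is that $\gamma\mapsto f(\gamma)\langle L_\gamma\xi(s\gamma),\eta(r\gamma)\rangle$ is $\nu_0$-integrable, and by Cauchy--Schwarz the integral of its modulus is at most $\|f\|_I\|\xi\|\|\eta\|$.

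Finally, the disintegration step needs the standing hypotheses of \cite{Wil-2019a}: second countable, locally compact Hausdorff, with a Haar system. It also needs a reduction to nondegenerate representations on separable spaces. The paper's statement leaves these implicit, and they do hold for $\mathcal{G}_{\mathrm{tight}}$.
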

	\subsection{Groupoid associated to an Inverse semigroup}
	In this subsection, we recall from \cite{Sun-2013a} the process of associating a groupoid to an inverse semigroup.

	\begin{dfn}
		A semigroup $T$ is an \emph{inverse semigroup} if every $a \in T$ has a unique associated element $a^{*} \in T$ satisfying $aa^{*}a = a$ and $a^{*}aa^{*} = a^{*}$. If there exists an element $\bm{0} \in T$ such that $\bm{0}s = s\bm{0}=\bm{0}$ for every $s \in T$, then $T$ is said to have a \emph{zero element}. If there exists an element $\bm{1} \in T$ such that $\bm{1}s = s\bm{1} = s$ for every $s \in T$, then $T$ is said to have a \emph{multiplicative identity}.
	\end{dfn}

	\noindent Observe that the map $\ast : T \to T$, defined by $a \mapsto a^{*}$, is an involution on $T$; that is, $(xy)^{*} = y^{*}x^{*}$ and $x^{**} = x$ for all $x, y \in T$.

	\begin{dfn}
		An element $e \in T$ is an \emph{idempotent} if $e^{2} = e$. An idempotent $e$ is called a \emph{projection} if $e^{2} = e^{*} = e$.
	\end{dfn}

	\noindent Let $E$ denote the set of all projections in $T$. Then $E$ is a commutative semigroup, and each idempotent is a projection in an inverse semigroup. Any inverse semigroup $S$ has a natural partial order defined by $x \le y$ if $x = yx^{*}x$. The partial order on $E$ inherited from $T$ can be written as $x \le y$ if $x = ye$ for some $e \in E$.
	
	\begin{dfn}
		A nonzero semigroup homomorphism $x : E \to \{0,1\}$ satisfying $x(0) = 0$ is called a \emph{character} on $E$.
	\end{dfn}

	\noindent Let $\widehat{E}_{0}$ be the set of all characters on $E$. Equipped with the subspace topology inherited from the product topology on $\{0,1\}^{E}$, the space $\widehat{E}_{0}$ becomes locally compact and Hausdorff.

	\begin{dfn}\label{Filter}
		Let $T$ be an inverse semigroup and $E$ its set of projections. A \emph{filter} in $E$ is a subset $A \subseteq E$ such that:
		\begin{enumerate}[(i)]
			\item $0 \notin A$,
			\item if $e \in A$ and $f \in E$ with $f \ge e$, then $f \in A$,
			\item if $e, f \in A$, then $ef \in A$.
		\end{enumerate}
		A maximal filter is called an \emph{ultrafilter}.
	\end{dfn}
	
	\begin{rmrk}
		Throughout this article, we consider only those inverse semigroups $T$ which contain a $\bm{1}$ element. Then by property (ii) of Definition \ref{Filter}, $\bm{1}$ belongs to every filter.
	\end{rmrk}

	\noindent If $x \in \widehat{E}_{0}$, then $A_{x} = \{e \in E : x(e) = 1\} = x^{-1}(\{1\})$ is a filter. Conversely, if $A$ is a filter, its characteristic function $1_{A}$ is a character. Thus, the correspondence $A \mapsto 1_{A}$ defines a bijection between the set of filters and the set of characters. Under this bijection, \emph{maximal characters} correspond exactly to ultrafilters.

	\noindent Define the set of maximal characters as 
	\[
	\widehat{E}_{\infty} = \{x \in \widehat{E}_{0} : A_{x} \text{ is an ultrafilter}\}.
	\]
	Let $\widehat{E}_{\mathrm{tight}}$ denote the closure of $\widehat{E}_{\infty}$ in $\widehat{E}_{0}$. Elements of $\widehat{E}_{\mathrm{tight}}$ are called \emph{tight characters} of $E$.

	\noindent For $x \in \widehat{E}_{0}$ and $s \in T$, define the map 
	\[
	x\cdot s : E \to \{0,1\}, \qquad (x\cdot s)(e) = x(ses^{*}).
	\]
	Then $x\cdot s(0) = 0$, and $x.s$ is a semigroup homomorphism. Moreover, $x\cdot s$ is a character if and only if $x(ss^{*}) = 1$.

	\noindent For $s \in T$, the domain and range sets are defined as 
	\[
	D_{s} = \{x \in \widehat{E}_{0} : x(ss^{*}) = 1\}, \qquad R_{s} = \{x \in \widehat{E}_{0} : x(s^{*}s) = 1\}.
	\]
	If $e \in E$, then $D_{e} = \{x \in \widehat{E}_{0} : x(e) = 1\}$. Both $D_s$ and $R_s$ are compact and open in $\widehat{E}_{0}$. The map $\alpha_s : D_{s} \to R_{s}$ defined by $\alpha_s(x) = x\cdot s$ is a homeomorphism with inverse $\alpha_{s^{*}}$.

	\noindent Consider the \emph{transformation groupoid}
	\[
	\Sigma = \{(x,s) : x \in D_s,\ s \in T\},
	\]
	where the elements of $\Sigma^{2}$ are precisely the pairs $(x,s)(x.s,t)$, and the groupoid operations are
	\[
	(x,s)(x.s,t) = (x,st), \qquad (x,s)^{-1} = (x\cdot s, s^{*}).
	\]
	Define an equivalence relation $\sim$ on $\Sigma$ by
	\[
	(x,s) \sim (x,t) \iff x(e) = x(ee^{*}) = 1 \text{ and } es = et \text{ for some } e \in E.
	\]
	The quotient $\mathcal{G}' = \Sigma / \sim$ becomes a groupoid with operations
	\[
	[(x,s)][(x\cdot s,t)] = [(x,st)],
	\qquad [(x,s)]^{-1} = [(x\cdot s, s^{*})],
	\]
	whenever $([(x,s)], [(x\cdot s,t)]) \in (\mathcal{G}')^{2}$.

	\noindent For $s \in T$ and open $U \subseteq D_{s}$, define
	$\Theta(U,s) := \{[(x,s)] : x \in U\},$
	and write $\Theta_{s}$ for $\Theta(D_{s},s)$.

	\begin{ppsn}\label{gpoid basis}
		The collection $\{\Theta(U,s) : s \in T,\ U \subseteq D_{s} \text{ open}\}$ forms a basis for a topology on $\mathcal{G}'$. With this topology, $\mathcal{G}'$ becomes a topological groupoid with unit space $\mathcal{G}'^{0} \cong \widehat{E}_{0}$ under the identification $[(x,1)] \mapsto x$. The following statements hold.
		\begin{enumerate}[(i)]
			\item For $s,t \in T$, one has $\Theta_{st} = \Theta_{s} \Theta_{t}$.
			\item For $s \in T$, $\Theta_{s}^{-1} = \Theta_{s^{*}}$.
			\item The algebra $C^{*}(\mathcal{G}')$ is generated by the set $\{1_{\Theta_{s}} : s \in T\}$.
		\end{enumerate}
	\end{ppsn}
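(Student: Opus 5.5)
We need to prove Proposition \ref{gpoid basis}, a standard result from Exel's construction. The plan is to verify the basis axioms directly from the definitions, then the topological groupoid properties, the identification of the unit space, and finally (i)--(iii).

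\emph{Basis.} Every germ $[(x,s)]$ lies in $\Theta(D_s,s)$, so the sets cover $\mathcal{G}'$. For the intersection property, suppose $[(x,s)]=[(x,t)]$ lies in $\Theta(U,s)\cap\Theta(V,t)$. Then there is $e\in E$ with $x(e)=1$ and $es=et$. Put $W=U\cap V\cap D_e$, which is open and contains $x$. For any $y\in W$ we have $y(e)=1$, so $[(y,s)]=[(y,t)]$. Hence $\Theta(W,s)\subseteq\Theta(U,s)\cap\Theta(V,t)$ is a basic neighbourhood of $[(x,s)]$.

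\emph{Continuity of inversion.} Inversion maps $\Theta(U,s)$ onto $\Theta(\alpha_s(U),s^*)$, and $\alpha_s$ is a homeomorphism $D_s\to R_s=D_{s^*}$. So inversion carries basic open sets to basic open sets and is its own inverse, hence continuous.

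\emph{Continuity of multiplication.} Take a product $[(x,s)][(x\cdot s,t)]=[(x,st)]$ and a basic neighbourhood $\Theta(W,st)$ of it. Set $U=W\cap D_s$ and $V=\alpha_s(U)\cap D_t$. Then $(\Theta(U,s)\times\Theta(V,t))\cap(\mathcal{G}')^2$ is a neighbourhood of the pair whose products lie in $\Theta(W,st)$. Here one uses that $y\in D_{st}$ exactly when $y\in D_s$ and $y\cdot s\in D_t$, which follows from $(st)(st)^*=s(tt^*)s^*$.

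\emph{Well-definedness.} I would also check that the operations respect $\sim$. If $es=es'$ with $x(e)=1$, then $[(x,st)]=[(x,s't)]$ using the same $e$. For the second factor, if $ft=ft'$ with $(x\cdot s)(f)=1$, then $e'=sfs^*\in E$ satisfies $x(e')=1$ and $e'st=sfs^*st=sft=sft'=e'st'$, using $s^*sf=fs^*s$ (idempotents commute).

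\emph{Unit space.} Units are the classes $[(x,s)(x,s)^{-1}]=[(x,ss^*)]$. Since $ss^*\in E$ with $x(ss^*)=1$ and $ss^*\cdot ss^*=ss^*\cdot\bm{1}$, we get $[(x,ss^*)]=[(x,\bm{1})]$. Thus $x\mapsto[(x,\bm{1})]$ is surjective onto $\mathcal{G}'^0$. It is injective because the first coordinate is an invariant of the class. It is a homeomorphism because $\Theta(U,\bm{1})$ corresponds exactly to $U$, and $D_{\bm 1}=\widehat E_0$.

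\emph{Statement (i).} The inclusion $\Theta_s\Theta_t\subseteq\Theta_{st}$ is immediate. Conversely, if $x\in D_{st}$ then $x(ss^*)\ge x(stt^*s^*)=1$, so $x\in D_s$, and $(x\cdot s)(tt^*)=x(stt^*s^*)=1$, so $x\cdot s\in D_t$. Hence $[(x,st)]=[(x,s)][(x\cdot s,t)]$.

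\emph{Statement (ii).} This follows since inversion maps $\Theta_s$ onto $\Theta(R_s,s^*)=\Theta_{s^*}$.

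\emph{Statement (iii).} This is the main obstacle. Here I would cite Exel's and Paterson's results: $\mathcal{G}'$ is étale and each $\Theta_s$ is a compact open bisection, being homeomorphic to $D_s$ via the source map. In the Hausdorff case, every $f\in C_c(\mathcal{G}')$ is a uniform limit, with supports in a fixed compact set, of linear combinations of functions $1_{\Theta(U,s)}$ with $U$ compact open. Such a compact open $U\subseteq D_s$ is a finite Boolean combination of sets $D_e$, since the topology on $\widehat E_0$ is generated by the sets $D_e$ and their complements. By (i), $1_{\Theta(D_e,s)}=1_{\Theta_e}*1_{\Theta_s}$. Complements are handled via $1_{\Theta_s}-1_{\Theta_{es}}$, and intersections via products of the commuting idempotents $1_{\Theta_e}$. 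Density in the $I$-norm, and hence in the full norm, then follows because the supports are uniformly controlled. This yields (iii).
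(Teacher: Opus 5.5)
Your proof is correct, and it takes a genuinely different route from the paper, which gives no argument for this proposition: it is simply recalled, (iii) included, from Sundar's and Exel's construction of the groupoid of germs. You instead verify everything from the definitions, and these steps are sound:
\begin{itemize}
\item the basis property, by shrinking to $U\cap V\cap D_e$;
\item continuity of inversion via $\alpha_s$;
\item continuity of multiplication via $D_{st}=\{x\in D_s : x\cdot s\in D_t\}$;
\item well-definedness of the product using $e'=sfs^*$;
\item the identification of units through $[(x,ss^*)]=[(x,\bm{1})]$;
\item your proofs of (i) and (ii).
\end{itemize}
The citation buys generality, covering arbitrary inverse semigroups and non-Hausdorff germ groupoids. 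Your route buys a self-contained account of identities the paper later uses without comment, such as $1_{\Theta_{st}}=1_{\Theta_s}*1_{\Theta_t}$ and $\Theta_{es}=\Theta(D_e\cap D_s,s)$. These underlie the reduction to the four generators $1_{\Theta_{s_i^0}}$ in Theorem~\ref{the isomism}.

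Two corrections. The first is minor: $\Theta_s$ is identified with $D_s$ by the range map $[(x,s)]\mapsto x$. The source map sends it onto $R_s=D_{s^*}$, so compactness is unaffected.

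The second is more substantive: your proof of (iii) assumes $\mathcal{G}'$ is Hausdorff without saying why. Within the paper this is harmless, for three reasons.
\begin{itemize}
\item $C^*(\mathcal{G})$ is only defined there for Hausdorff groupoids.
\item (iii) is actually applied to $\mathcal{G}_{\mathrm{tight}}$, which is proved Hausdorff, and your density argument transfers verbatim to that reduction.
\item For this particular $T$ even $\mathcal{G}'$ is Hausdorff. $T$ is $E^*$-unitary: a nonzero idempotent $e\le B_i(r,\bm n,\bm m)$ forces a fixed basis vector, hence $r=0$ and $\bm n=\bm m$. Consequently two germs $[(z,s)]$, $[(z,t)]$ that agree at one point of $D_s\cap D_t$ agree on all of it, so distinct germs at the same point have disjoint neighbourhoods $\Theta_s$, $\Theta_t$.
\end{itemize}

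The restriction can in fact be removed by working one bisection at a time.
\begin{itemize}
\item A function supported in $\Theta_s$ equals $g*1_{\Theta_s}$ with $g\in C(D_s)$.
\item By Stone--Weierstrass on $D_s$, using the point-separating family $\{1_{D_e}\}$, $g$ is a uniform limit of linear combinations of the $1_{\Theta_e}$ with $e\le ss^*$.
\item For functions supported in a single bisection, $\|\cdot\|_I\le\|\cdot\|_\infty$, so the uniform approximation is also an approximation in the full norm.
\end{itemize}
Since $C_c(\mathcal{G}')$ is spanned by such functions via partitions of unity (taken in Connes' sense when $\mathcal{G}'$ is not Hausdorff), (iii) follows in general.
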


	\noindent As a subspace of $\mathcal{G}'$, one can identify $\widehat{E_{0}}$ with the open set $\Theta(1,\widehat{E}_{0})$ under the map $[(x,1)] \leftrightarrow x$. Hence $\widehat{E}_{0}$ is open in $\mathcal{G}'$ and so $\mathcal{G}'$ is $r$-discrete (see \cite[Page 11]{Wil-2019a}).	\begin{dfn}[Orbit Space of $\clg^0$] 
		$\clg$ acts on $\clg^0\cong\widehat{E_{0}}$ as $$[x,t]\cdot s([x,t])\coloneq r([x,t]),\quad [x,t]\in\clg,$$ i.e. $[x,t]\cdot t=x\cdot t$. Thus, the orbit of $x$ is $$G(x)=\{x\cdot t:t\in T\}.$$ We denote the orbit space of $\clg^0$ by $\clg\backslash\clg^0$. We give $\clg\backslash\clg^0$ the quotient topology.
	\end{dfn}	
	The following is a useful criterion for the amenability of a groupoid.
	\begin{thm}[\cite{Wil-2019a}]\label{crit_amenable}
		Let $\clg$ be a second countable locally compact groupoid with a Haar system such that the orbit space $\clg\backslash\clg^0$ is	 $T_0$. Then $\clg$ is amenable if and only if every isotropy group $\clg_u^u$ is amenable.
	\end{thm}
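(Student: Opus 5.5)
The plan is to prove the two implications separately. The direction ``amenable $\Rightarrow$ isotropy amenable'' is soft; the converse is where the hypothesis that $\clg\backslash\clg^0$ is $T_0$ is used. Throughout I will use the equivalence, due to Anantharaman-Delaroche and Renault, between topological amenability and measurewise amenability for second countable locally compact groupoids with Haar system. This lets me work with quasi-invariant measures instead of approximating nets in $C_c(\clg)$.

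\emph{Necessity.} Suppose $\clg$ is topologically amenable with net $\{f_i\}$. Fix $u\in\clg^0$.
\begin{enumerate}[(i)]
\item The orbit $\clg\cdot u$ is invariant. Under the $T_0$ hypothesis, the Glimm--Ramsay--Effros dichotomy makes it locally closed, so $\clg|_{\clg\cdot u}$ is a locally compact groupoid with Haar system.
\item Amenability passes to reductions to locally closed invariant sets: restrict the $f_i$ to $\clg|_{\clg\cdot u}$.
\item The transitive groupoid $\clg|_{\clg\cdot u}$ is Renault-equivalent to $\clg_u^u$ via the $(\clg|_{\clg\cdot u},\clg_u^u)$-equivalence $\clg_u$. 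Amenability is invariant under equivalence, so $\clg_u^u$ is amenable.
\end{enumerate}
Alternatively, one can push the functions $f_i|_{\clg_u}$ forward directly to obtain a Reiter-type net on the group $\clg_u^u$.

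\emph{Sufficiency.} Assume every $\clg_u^u$ is amenable. It suffices to show $\clg$ is measurewise amenable, that is, $(\clg,\mu)$ is amenable for every quasi-invariant Radon measure $\mu$ on $\clg^0$.
\begin{enumerate}[(i)]
\item Decompose $\mu$ into ergodic components; this is possible by second countability.
\item Use the fact that $(\clg,\mu)$ is amenable iff almost every ergodic component is amenable.
\item Since $\clg\backslash\clg^0$ is $T_0$, the orbit space is countably separated (Ramsay/Effros). Hence every ergodic quasi-invariant measure is concentrated on a single orbit $\clg\cdot u$.
\item On that orbit, $(\clg,\mu)$ is a transitive measured groupoid, equivalent to the group $\clg_u^u$. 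It is therefore amenable because $\clg_u^u$ is.
\end{enumerate}
This gives measurewise amenability, and hence topological amenability.

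\emph{Expected obstacle.} The delicate step is the passage from ergodic components to the whole measured groupoid. One needs a measurable selection of the approximating functions or invariant means across components, and it is exactly the countable separation coming from the $T_0$ hypothesis that makes this work. A second delicate step is the final appeal to ``measurewise amenable $\Rightarrow$ topologically amenable.'' If that route proves awkward, I would try a topological alternative. By Glimm's theorem, a $T_0$ orbit space admits an ordinal-indexed increasing family of open invariant sets $U_\alpha$ whose successive differences have Hausdorff orbit spaces. I would then use that amenability is preserved under extensions $0\to C^*(\clg|_U)\to C^*(\clg)\to C^*(\clg|_{\clg^0\setminus U})\to 0$ and under increasing unions, and reduce to the case of Hausdorff orbit space, where the orbitwise equivalences with $\clg_u^u$ can be glued.
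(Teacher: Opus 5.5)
The paper gives no proof of this statement. It is quoted from \cite{Wil-2019a}, which traces it to Anantharaman-Delaroche and Renault, so your proposal has to stand on its own, and most of it does.

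\textbf{What works.} The necessity direction is fine, with one technical correction. You cannot literally restrict $f_i$ to the reduction over the open part of a locally closed orbit, because the restriction need not be compactly supported there. Multiply instead by $\varphi\circ r$, where $\varphi$ is compactly supported in that open set and equals $1$ on the relevant compact set. This changes $f_i*f_i^*(\gamma)$ only by the factor $\varphi(r(\gamma))\overline{\varphi(s(\gamma))}$. Your sufficiency argument is the standard proof that $\mathcal{G}$ is \emph{measurewise} amenable: ergodic decomposition, countable separation of $\mathcal{G}\backslash\mathcal{G}^0$ forcing each ergodic quasi-invariant measure onto a single orbit, and reduction of a transitive measured groupoid to its isotropy group by equivalence.

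\textbf{The gap.} The problem is the final upgrade to topological amenability. That is the notion defined in the paper's preliminaries, and the paper feeds it into ``topologically amenable $\Rightarrow$ reduced norm $=$ full norm''.
\begin{itemize}
\item There is no general equivalence between measurewise and topological amenability for second countable locally compact groupoids with Haar system that you can invoke. Topological $\Rightarrow$ measurewise holds in general. The converse is established only under extra hypotheses, the \'etale case being the main one, and is not available in general. So your main route proves a weaker statement.
\item Your fallback does not close the gap. An exact sequence $0\to C^*(\mathcal{G}|_U)\to C^*(\mathcal{G})\to C^*(\mathcal{G}|_{\mathcal{G}^0\setminus U})\to 0$ transports nuclearity, not groupoid amenability. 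Outside the \'etale setting these differ: a noncompact semisimple Lie group has a type I, hence nuclear, $C^*$-algebra but is not amenable.
\item The groupoid-level extension statement would have to be proved by patching approximately invariant functions across the boundary of $U$, which is not automatic.
\item On a layer with Hausdorff orbit space, ``gluing the orbitwise equivalences'' means choosing approximately invariant functions for the groups $\mathcal{G}_u^u$ continuously in $u$. Already for a group bundle this is the entire difficulty, and the outline does not address it.
\end{itemize}

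\textbf{Two ways out.}
\begin{itemize}
\item State and use the measurewise version. It already gives $C^*(\mathcal{G})=C^*_{\mathrm{red}}(\mathcal{G})$, which is the only use the paper makes of the theorem. For the \'etale groupoid $\mathcal{G}_{\mathrm{tight}}$ it also upgrades to topological amenability by the \'etale case.
\item For the general topological statement, use Renault's theorem that topological amenability is equivalent to Borel amenability. Borel amenability means Borel systems of approximately invariant probability measures, with convergence required only for each $\gamma$ separately. Pointwise convergence lets you glue over Borel invariant partitions. Countable separation of the orbit space gives a Borel transversal. What remains is a Borel choice of Reiter functions for the amenable groups $\mathcal{G}_u^u$ along that transversal, which is a genuine measurable-selection argument you would still have to supply.
\end{itemize}
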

	\begin{dfn}
		The groupoid $\mathcal{G}_{\mathrm{tight}}$ is defined to be the reduction of $\mathcal{G}'$ to the subset $\widehat{E}_{\mathrm{tight}}$.
	\end{dfn}

	\noindent Since $\widehat{E}_{0}$ is open in $\mathcal{G}'$, the intersection $\widehat{E}_{\mathrm{tight}} = \widehat{E}_{0} \cap \mathcal{G}_{\mathrm{tight}}$ is open in $\mathcal{G}_{\mathrm{tight}}$. Therefore, $\mathcal{G}_{\mathrm{tight}}$ is also $r$-discrete (see \cite[Page 11]{Wil-2019a}).
	\subsection{Induced Representations}
	\noindent Given a subgroupoid $\clk$ of a groupoid $\mathcal{G}$, we construct the induced representations of $C^{*}(\mathcal{G})$ from the representations of $C^{*}(\clk)$. Let $\mathcal{G}$ be a locally compact, second countable Hausdorff groupoid and let $\clk$ be a subgroupoid of $\mathcal{G}$. Given a representation $L$ of $C^{*}(\clk)$ on the Hilbert space $\scrh_{L}$, our aim is to define a Hilbert space $\scrh_{\operatorname{Ind} L}$ and to further construct a representation $\operatorname{Ind}_{\clk}^{\mathcal{G}}L$ of $C^{*}(\mathcal{G})$ on $\scrh_{\operatorname{Ind}L}$.

	\noindent Let $\mathcal{G}_{\clk^{0}}:=s^{-1}(\clk^{0})$. Define a $C_{c}(\clk)$-valued inner product on $C_{c}(\mathcal{G}_{\clk^{0}})$,
	\begin{equation}\label{pre ip}\langle \phi,\psi\rangle_{*}(k)=\int_{\mathcal{G}}\overline{\phi(y)}\psi(yk)d\lambda_{r(k)}(y),\quad \phi,\psi\in C_{c}(\mathcal{G}_{\clk^{0}})\,,\,k\in \clk.\end{equation}
	This inner product makes $C_c(\clg_{\clk^0})$ into a pre-Hilbert $C^*(\clk)$-module. Also, $\scrh_L$, being a Hilbert space, is a Hilbert $\bbc$-module, and it has a $C^*(\clk)$ action via the representation $L$.
	$$\begin{tikzcd}[row sep=3em, column sep=3em]
		C_c(\clg) & \scrh_L \\
		C^*(\clk) \arrow[u, "{\substack{\text{pre-Hilbert}\\\text{module}}}"] \arrow[ur, bend left=20, "L"] 
		& \bbc \arrow[u, "{\substack{\text{Hilbert}\\\text{module}}}"']
	\end{tikzcd}$$
	On the algebraic tensor product $C_c(\clg_{\clk^0})\odot \scrh_L$, consider the inner product 
	\begin{equation}\label{ip ind}
		\langle \phi\otimes h_{1},\psi\otimes h_{2}\rangle=\langle L(\langle \phi,\psi\rangle_{*})h_{1},h_{2}\rangle,\quad \phi,\psi\in C_{c}(\mathcal{G}_{\clk^{0}})\,,\,h_{1},h_{2}\in \scrh_{L}.
	\end{equation} 
	Upon completing $C_c(\clg_{\clk^0})$ with respect to this inner product, we get the internal tensor product $C_c(\clg_{\clk^0})\otimes_{C^*(\clk)}\scrh_L$. It is a Hilbert $\bbc$-module, i.e. a Hilbert space, which we denote by $\scrh_{\operatorname{Ind}L}$. Finally, the representation $\operatorname{Ind}_{\clk}^{\mathcal{G}}L$ induced on $C^{*}(\mathcal{G})$ by the representation $L$ on $C^{*}(\clk)$ is given by 
	\begin{equation}\label{ind rep}
		\begin{gathered}
			\operatorname{Ind}_{\clk}^{\mathcal{G}}L:C^*(\clg)\rightarrow \mathcal{L}(\scrh_{\operatorname{Ind}L}), \\\operatorname{Ind}_{\clk}^{\mathcal{G}}L(a)(\phi\otimes h)=a\ast\phi\otimes h,\quad a\in C_{c}(\mathcal{G})\,,\,\phi\in C_{c}(\mathcal{G}_{\clk^{0}})\,,\,h\in\scrh_{L}.
		\end{gathered}
	\end{equation}\\
	We refer to \cite{Wil-2019a} for the following result on induced representations. It relates the irreducible representations of a groupoid to those of its isotropy groups.
	\begin{thm}
		Let $\clg$ be a second countable locally compact Hausdorff groupoid with a Haar system. Suppose the orbits of $\clg^0$ are locally closed. Then we have the following results.
		\begin{enumerate}[(i)]
			\item Every irreducible representation of $C^*(\clg)$ is induced from an isotropy group.
			\item Let $u,v\in\clg^0$ be from the same orbit. Then, up to equivalence, there is a one - one correspondence between the representations induced from $\clg_u^u$ and those induced from $\clg_v^v$.
			\item Let $u$ and $v$ be from distinct orbits. If $L_1$ and $L_2$ are unitary representations of $\clg^u_u$ and $\clg_v^v$ respectively, then the induced representations $\operatorname{Ind}_{\clg_u^u}^{\clg}L_1$ and $\operatorname{Ind}_{\clg_u^u}^{\clg}L_2$ are not equivalent.
			\item If $L$ is an irreducible representation of $\clg^u_u$, then $\operatorname{Ind}_{\clg_u^u}^{\clg}L$ is an irreducible representation of $C^*(\clg)$. 
		\end{enumerate}
	\end{thm}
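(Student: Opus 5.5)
The statement is a version of the Mackey machine for groupoids, and I would prove it by reducing to transitive groupoids and then using Renault's equivalence theorem. Throughout I use that $\clg$ is second countable, locally compact, Hausdorff, with a Haar system $\{\lambda^u\}$.

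\emph{Step 1: localizing an irreducible representation to one orbit.} Let $\pi$ be an irreducible representation of $C^*(\clg)$. Every open invariant set $U\subseteq\clg^0$ gives an ideal $C^*(\clg|_U)$ of $C^*(\clg)$, with quotient $C^*(\clg|_{\clg^0\setminus U})$. Let $U_\pi$ be the largest open invariant set whose ideal lies in $\ker\pi$, and put $F=\clg^0\setminus U_\pi$. Then $\pi$ factors through $C^*(\clg|_F)$. The closed invariant set $F$ is ``irreducible'': it is not the union of two proper closed invariant subsets, since otherwise the two corresponding ideals would have product in $\ker\pi$ while neither lies in $\ker\pi$, contradicting primeness of $\ker\pi$. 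Second countability and a Baire category argument then show that $F=\overline{\clg\cdot u}$ for some $u$. Since the orbits are locally closed, $O:=\clg\cdot u$ is open in its closure $F$. Hence $C^*(\clg|_O)$ is an ideal of $C^*(\clg|_F)$. By maximality of $U_\pi$, $\pi$ does not vanish on this ideal, so $\pi$ restricts to an irreducible representation of $C^*(\clg|_O)$ and is the canonical extension of that restriction.

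\emph{Step 2: transitive groupoids.} The groupoid $\clg|_O$ is transitive. Local closedness makes $O$ locally compact and Polish, and an open-mapping argument in the style of Effros--Ramsay shows that $\clg_u\to O$, $\gamma\mapsto r(\gamma)$, is open. Therefore $\clg_u$ is a $(\clg|_O,\clg_u^u)$-equivalence, and Renault's equivalence theorem (Muhly--Renault--Williams) makes $\overline{C_c(\clg_u)}$ an imprimitivity bimodule between $C^*(\clg|_O)$ and $C^*(\clg_u^u)$. The pre-inner product \eqref{pre ip} restricted to $\clg_u$ is exactly the right inner product of this bimodule. So the induction functor $L\mapsto \operatorname{Ind}_{\clg_u^u}^{\clg}L$ of \eqref{ind rep}, composed with restriction to the ideal, is Rieffel induction across a Morita equivalence.

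\emph{Step 3: deducing (i)--(iv).} Rieffel induction across an imprimitivity bimodule preserves irreducibility and is a bijection on equivalence classes. Combined with Step 1, this gives (i) and (iv). For (ii), $v=r(\gamma)$ with $s(\gamma)=u$ gives the conjugation isomorphism $\clg_u^u\cong\clg_v^v$, $\eta\mapsto\gamma\eta\gamma^{-1}$. Translating sections by $\gamma$ shows that induction from $L$ and from $L\circ\operatorname{Ad}\gamma^{-1}$ give equivalent representations. For (iii), the closed set $F$ recovered in Step 1 from $\operatorname{Ind}L$ is the orbit closure of $u$. If $u$ and $v$ lie in different locally closed orbits, the pairs (orbit closure, orbit) differ, so the kernels differ in their restriction to the ideals $C^*(\clg|_U)$. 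Hence the induced representations are inequivalent.

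\emph{Main obstacle.} The hardest step is the topological input in Steps 1--2. One must show that irreducibility of $F$ forces $F$ to be an orbit closure, and that local closedness gives openness of $r\colon\clg_u\to O$. I would try the Glimm--Ramsay dichotomy for second countable groupoids first. Once these are in place, the rest is formal Morita theory.
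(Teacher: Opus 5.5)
The paper does not prove this theorem. It quotes it from Williams' Tool Kit \cite{Wil-2019a} and uses it as a black box in Section~\ref{sect_rep}, so there is no argument of the paper's own to compare with.

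Your outline is the standard argument behind the cited result, and it is correct modulo the inputs you name:
\begin{itemize}
\item exactness of $0\to C^*(\clg|_U)\to C^*(\clg)\to C^*(\clg|_{\clg^0\setminus U})\to 0$ for full groupoid $C^*$-algebras;
\item openness of $r$ and $s$, coming from the Haar system, which your Baire argument needs so that saturations of open sets are open and closures of invariant sets are invariant;
\item the Glimm--Ramsay fact that $r\colon\clg_u\to[u]$ is open when $[u]$ is locally closed;
\item the Muhly--Renault--Williams equivalence theorem.
\end{itemize}
The overall route is: localize an irreducible representation to an orbit closure via primeness and Baire category, make $\clg_u$ a $(\clg|_{[u]},\clg_u^u)$-equivalence, then read off (i)--(iv) from Rieffel induction.

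One point in (iii) should be made explicit. The invariant you extract from $\ker(\operatorname{Ind}L)$ is only the largest open invariant set killed, namely $\clg^0\setminus\overline{[u]}$. So the kernel sees the orbit closure, not the pair (closure, orbit). You therefore need that distinct locally closed orbits have distinct closures. This holds: if $\overline{[u]}=\overline{[v]}=F$, then $[u]$ and $[v]$ are both dense, relatively open subsets of the Baire space $F$, so they intersect and coincide.

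Also, (iii) requires $L_1,L_2\neq 0$. The second induced representation in the statement should read $\operatorname{Ind}_{\clg_v^v}^{\clg}L_2$; you implicitly corrected this typo.
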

	\section{Inverse Semigroup $T$ and the associated groupoid $\mathcal{G}_{\mathrm{tight}}$}\label{sect_gpoid}
	
	This section constructs the inverse semigroup $T$ generated by the standard generators of $C(SO_{0}(4)/SO_{0}(2))$, which are partial isometries and explicitly computes its tight groupoid  $\mathcal{G}_{\mathrm{tight}}$. \\
	
    \noindent Let $D_q \subset \cll(\ell^2(\bbn) \otimes \ell^2(\bbn) \otimes \ell^2(\bbz))$ be the $C^*$-subalgebra of $C(SO_{q}(4)/SO_{q}(2))$ generated by
	\begin{equation*}
		\begin{aligned}
			s_{1}^q &= \sqrt{1-q^{2N}}S^* \otimes \sqrt{1-q^{2N}}S^*\otimes \bm{t}, & s_{2}^q &= -q^N\otimes \sqrt{1-q^{2N}}S^*\otimes \bm{t}, \\
			s_{3}^q &= -\sqrt{1-q^{2N}}S^*\otimes q^N\otimes \bm{t}, & s_{4}^q &= -q^N\otimes q^N\otimes \bm{t}.
		\end{aligned}
	\end{equation*}
	Defining $s_{i}^{0} = \lim_{q \rightarrow 0} s_{i}^{q}$ for $1\leq i\leq 4$, a direct computation gives
	\begin{equation*}
		s_{1}^{0}= S^*\otimes S^*\otimes \bm{t}, \quad s_{2}^{0}= -p\otimes S^*\otimes \bm{t}, \quad s_{3}^{0}= -S^*\otimes p\otimes \bm{t}, \quad s_{4}^{0}= -p\otimes p\otimes \bm{t}.
	\end{equation*}
	Let $D_0$ be the $C^*$-subalgebra of $\cll(\ell^2(\bbn)\otimes \ell^2(\bbn)\otimes\ell^2(\bbz))$ generated by $\{s_{i}^{0}: 1\leq i \leq 4\}$. 
	
	\begin{ppsn}[\cite{BhuBisSau-2024aaa}]
		For all $q\in(0,1)$, we have $D_{q}=D_{0}$. 
	\end{ppsn}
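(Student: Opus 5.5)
We prove the equality $D_q=D_0$ by two inclusions, working inside the $C^*$-algebra $\mathcal{T}\otimes\mathcal{T}\otimes C^*(\bm{t})$, where $\mathcal{T}$ is the Toeplitz algebra generated by $S$.

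The basic input is the one-variable fact that, for $q\in(0,1)$, the $C^*$-algebra generated by $\{\sqrt{1-q^{2N}}S^*,\,q^N\}$ (equivalently, by the $SU_q(2)$-type pair) coincides with the one generated by $\{S^*,p\}$. Both equal $\mathcal{T}$. Indeed:
\begin{itemize}
\item $q^N$ is compact and diagonal with distinct eigenvalues $q^n$, so $C^*(q^N)$ contains every $p_n$, in particular $p$.
\item $\sqrt{1-q^{2N}}$ is invertible, and $\sqrt{1-q^{2N}}-1$ is compact and lies in $C^*(q^N)$ by continuous functional calculus (since $1\in$ the algebra).
\item Hence $S^*=\big(\sqrt{1-q^{2N}}\big)^{-1}\sqrt{1-q^{2N}}S^*$ lies in the $q$-algebra, and conversely.
\end{itemize}
The difficulty is that we have tensor products of such elements with a common factor $\bm{t}$, not the individual tensor legs. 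So we cannot simply factor. Instead we first identify the diagonal and off-diagonal parts available in each algebra.

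\textbf{Inclusion $D_q\subseteq D_0$.}
\begin{itemize}
\item In $D_0$, $s_1^{0}{}^*s_1^0=1\otimes 1\otimes 1$ and $s_4^{0*}s_4^0=p\otimes p\otimes 1$. Similarly $s_2^{0*}s_2^0=p\otimes 1\otimes 1$ and $s_3^{0*}s_3^0=1\otimes p\otimes 1$.
\item Products such as $s_1^{0*k}(p\otimes 1\otimes 1)s_1^{0k}$ give $p_k\otimes 1\otimes1$. Together with the $s$'s, $D_0$ contains $\mathcal{T}\otimes\mathcal{T}\otimes 1$-type diagonal elements: any $f(N)\otimes g(N)\otimes 1$ with $f,g$ continuous on $\overline{\mathbb{N}}$. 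This holds because the $p_k$ and $1$ generate $C(\overline{\mathbb{N}})$ in each leg.
\item Then $s_1^q=\big(\sqrt{1-q^{2N}}\otimes\sqrt{1-q^{2N}}\otimes1\big)\,s_1^0$, since the diagonal factor commutes past after adjusting $N\mapsto N-1$ shifts appropriately.
\item Likewise $s_2^q=(q^N\otimes\sqrt{1-q^{2N}}\otimes 1)(p\otimes S^*\otimes\bm{t})$-type expressions, rewritten using $q^N=\sum q^n p_n$ as a norm-convergent sum of $(p_n\otimes 1)\cdot$ (words in $s_i^0$). For instance, $(p_n\otimes S^*)\otimes\bm t$ is obtained from $s_1^{0*}$, $s_1^0$ and $s_2^0$ by shifting the first leg.
\end{itemize}
The key technical step is to check carefully that $S^{*k}pS^k$-type conjugations by $s_1^0$ produce $p_n\otimes S^*\otimes\bm t$ without leftover $\bm t$ powers. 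Note that $s_1^{0*}(\cdot)s_1^0$ conjugation cancels $\bm t$, which makes this work.

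\textbf{Inclusion $D_0\subseteq D_q$.}
\begin{itemize}
\item Symmetrically, in $D_q$ we have $s_1^{q*}s_1^q+s_2^{q*}s_2^q+s_3^{q*}s_3^q+s_4^{q*}s_4^q=1$ (a sphere relation), so $1\in D_q$.
\item $s_4^{q*}s_4^q=q^{2N}\otimes q^{2N}\otimes1$ and $s_2^{q*}s_2^q=q^{2N}\otimes(1-q^{2N}\circ\text{shift})\otimes1$. Sums and products of these give $q^{2N}\otimes 1\otimes 1$ and $1\otimes q^{2N}\otimes1$.
\item By functional calculus, $D_q$ then contains $C(\overline{\mathbb N})\otimes C(\overline{\mathbb N})\otimes 1$, since $q^{2N}$ separates points of $\overline{\mathbb N}$.
\item Dividing $s_1^q$ by the invertible diagonal $\sqrt{1-q^{2N}}\otimes\sqrt{1-q^{2N}}\otimes 1$ (invertibility after the shift, with inverse in $C(\overline{\mathbb N})^{\otimes2}$) yields $s_1^0$.
\item Multiplying $s_2^q$ by $(p\otimes1\otimes1)$ and by the inverse diagonal yields $s_2^0$, and the same for $s_3^0$ and $s_4^0$.
\end{itemize}

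\textbf{Main obstacle.} The main obstacle is the bookkeeping in the first inclusion: showing that $D_0$ contains the full commutative diagonal $C(\overline{\mathbb N})^{\otimes 2}\otimes 1$ and the mixed terms $p_n\otimes S^*\otimes\bm t$. I will try explicit words $s_1^{0*n}s_2^{0}s_1^{0n}$-style, tracking the $\bm t$-degree so that it is exactly one.
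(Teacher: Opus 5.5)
The paper does not prove this statement; it quotes it from \cite{BhuBisSau-2024aaa}. So your argument has to stand on its own. Your inclusion $D_0\subseteq D_q$ is essentially right, but $D_q\subseteq D_0$ has a genuine gap.

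Conjugation by powers of $s_1^0$ moves both tensor legs at once, so it cannot isolate one leg the way you need. Concretely, take $S^*$ to be the isometry, as in your identity $s_1^{0*}s_1^0=1$.
\begin{itemize}
\item $(s_1^{0*})^k(p\otimes 1\otimes 1)(s_1^0)^k=S^kpS^{*k}\otimes 1\otimes 1=0$ for $k\geq 1$.
\item In the other order, $(s_1^0)^k(p\otimes 1\otimes 1)(s_1^{0*})^k=p_k\otimes S^{*k}S^k\otimes 1$, which is truncated in the second leg.
\item The word $(s_1^{0*})^n s_2^0 (s_1^0)^n$ you propose is $0$.
\item In the other order, $(s_1^0)^n s_2^0 (s_1^{0*})^n=-p_n\otimes S^*S^{*n}S^n\otimes\bm{t}$, again truncated.
\end{itemize}
The obstruction is this truncation, not the $\bm{t}$-degree. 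For $n\geq 1$ no word in the generators equals $\pm p_n\otimes S^*\otimes\bm{t}$; this is the constraint $m_1\leq m_2$ in the description of $B_3$. So searching for explicit words cannot succeed. You also never treat $s_4^q=-\sum_{a,b}q^{a+b}\,p_a\otimes p_b\otimes\bm{t}$, which needs every $p_a\otimes p_b\otimes\bm{t}$ to lie in $D_0$.

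The missing ingredient is the compact part of $D_0$: every $p_{ab}\otimes p_{cd}\otimes\bm{t}^r$ with $a,b,c,d\in\mathbb{N}$ and $r\in\mathbb{Z}$ lies in $D_0$, i.e.\ $\mathcal{K}\otimes\mathcal{K}\otimes C^*(\bm{t})\subseteq D_0$. To see this, note that $(s_4^0)^r=(-1)^r p\otimes p\otimes\bm{t}^r$, its adjoints, and $s_4^{0*}s_4^0=p\otimes p\otimes 1$ give $p\otimes p\otimes\bm{t}^r$ for every $r\in\mathbb{Z}$. Multiplying on the left by $(s_1^0)^i(s_2^0)^j$ or $(s_1^0)^i(s_3^0)^j$, and on the right by adjoints of such words, moves the two matrix indices independently. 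These are the elements $B_4(r,\bm{n},\bm{m})$ of Section 3, with $r$ unrestricted. Then
\begin{align*}
p_k\otimes 1\otimes 1&=(s_1^0)^k(p\otimes 1\otimes 1)(s_1^{0*})^k+\sum_{j<k}p_k\otimes p_j\otimes 1,\\
p_n\otimes S^*\otimes\bm{t}&=-(s_1^0)^n s_2^0(s_1^{0*})^n+\sum_{j<n}p_n\otimes S^*p_j\otimes\bm{t}.
\end{align*}
Symmetrically, $S^*\otimes p_n\otimes\bm{t}\in D_0$ via $s_3^0$, and $s_4^q$ is a norm-convergent sum of elements of $D_0$. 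With these finite-rank corrections your scheme for $D_q\subseteq D_0$ goes through.

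In the other inclusion, correct two slips. First, the sphere relation is $\sum_i s_i^q s_i^{q*}=1$; the sum $\sum_i s_i^{q*}s_i^q$ is not $1$. Second, $\sqrt{1-q^{2N}}$ is not invertible, since it vanishes on $e_0$. What you actually use is $\sqrt{1-q^{2N}}\,S^*=S^*\sqrt{1-q^{2N+2}}$, whose second factor has its inverse in $C(\overline{\mathbb{N}})$.
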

	
	\noindent	Let  $\bm{n}=(n_1, n_2), \bm{m}=(m_1, m_2) \in {\mathbb N}^2$ and $r \in {\mathbb Z}$. Define
	\begin{IEEEeqnarray*}{rCll}
		B_1(r,\bm{n},\bm{m}) & = & S^*{^{n_{1}}}S^{m_{1}} \otimes S^*{^{n_{1}}}S^{m_{1}} \otimes \bm{t}^{r}\,\,;& r=m_{1}-n_{1},\\
		B_2(r,\bm{n},\bm{m}) & = & S^*{^{n_{1}}}S^{m_{1}} \otimes S^*{^{n_{2}}}pS^{m_{2}} \otimes \bm{t}^{r}\,\,;&  r=m_{1}-n_{1},n_{1}\geq n_{2},m_{1}\geq m_{2},\\
		B_3(r,\bm{n},\bm{m}) & = & S^*{^{n_{1}}}pS^{m_{1}} \otimes S^*{^{n_{2}}}S^{m_{2}} \otimes \bm{t}^{r}\,\,;& r=m_{2}-n_{2},n_{1}\leq n_{2},m_{1}\leq m_{2},\\
		B_4(r,\bm{n},\bm{m}) & = & S^*{^{n_{1}}}pS^{m_{1}} \otimes S^*{^{n_{2}}}pS^{m_{2}} \otimes \bm{t}^{r}\,\,;& r\in\mathbb{Z}.
	\end{IEEEeqnarray*}

	\noindent Each element $B_{i}(r,\bm{n},\bm{m})$ is a word from the set $\{s_{i}^{0},{s_{j}^{0}}^{*}:1\leq i<j\leq 4\}$.

	\begin{ppsn}\label{Semigroup U}
		Define
		$T=\{B_i(r,n,m) : 1 \leq i \leq 4, n, m \in {\mathbb N}^2, r \in {\mathbb Z}\}\cup\{0\}.$ Then $T$ is an inverse semigroup with respect to the operator multiplication and adjoint operation inherited from the $C^{*}$-algebra $\cll(\ell^2(\bbz)\otimes\ell^2(\bbn)\otimes \ell^2(\bbn) )$. 
	\end{ppsn}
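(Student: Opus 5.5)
Every element of $T$ is a tensor product of three partial isometries, one on each leg. On $\ell^2(\bbn)$ the relevant legs have the form $S^{*n}S^m$ or $S^{*n}pS^m=p_{mn}$, and on $\ell^2(\bbz)$ the leg is a power of $\bm{t}$. The plan is to prove two things: $T$ is closed under multiplication and adjoint, and each element $w$ has $w^*$ as its unique generalized inverse.

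\textbf{Adjoint closure.} $(S^{*n}S^m)^*=S^{*m}S^n$, $(S^{*n}pS^m)^*=S^{*m}pS^n$ and $(\bm{t}^r)^*=\bm{t}^{-r}$. Therefore $B_i(r,\bm n,\bm m)^*=B_i(-r,\bm m,\bm n)$. The side conditions are preserved: $r=m_1-n_1$ becomes $-r=n_1-m_1$, and the inequalities $n_1\ge n_2$, $m_1\ge m_2$ (or their reverses for $B_3$) are simply swapped between $\bm n$ and $\bm m$.

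\textbf{Closure under products.} I would first record the one-leg multiplication rules on $\ell^2(\bbn)$:
\[
S^{*a}S^{b}\,S^{*c}S^{d}=S^{*(a+c-\min(b,c))}S^{(b+d-\min(b,c))},\qquad
S^{*a}S^b\,S^{*c}pS^d=\begin{cases}S^{*(a+c-b)}pS^d,& c\ge b,\\ 0,& c<b,\end{cases}
\]
\[
S^{*a}pS^b\,S^{*c}pS^d=\delta_{bc}\,S^{*a}pS^d .
\]
These follow from $SS^*=1$, $S^*S=1-p$ (with $S$ the left shift), $S^kS^{*k}=1$, and $pS^{*k}=0$ for $k>0$. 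The mirror case $S^{*a}pS^b\,S^{*c}S^d$ is handled by taking adjoints. On the $\bbz$ leg the powers simply add.

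I would then run through the $16$ ordered pairs $(B_i,B_j)$ and show that each product is either $0$ or one of the $B_k$. For example, $B_1B_1$ stays of type $1$. The two identical legs give the same pair $(n,m)$, and the new exponent of $\bm t$ is
\[
(m_1-n_1)+(m_1'-n_1')=(b+d-\min)-(a+c-\min),
\]
so the constraint survives. Any product containing a $p$-leg in both slots is of type $4$, for which $r$ is unrestricted, so only the legs need checking. For mixed products such as $B_1B_2$ or $B_2B_1$ the result is of type $2$ (or $0$), and I must verify the inequalities $n_1\ge n_2$, $m_1\ge m_2$ and the relation $r=m_1-n_1$.

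\textbf{Main obstacle: the inequality constraints in mixed products.} I would first use the observation that the constraints are exactly what makes $B_2$ a product of words in the $s_i^0$: up to sign, $B_2=(S^{*}\otimes S^*\otimes\bm t)^{k}(1\otimes S^{*n}pS^m\otimes 1)(S\otimes S\otimes \bm t^{-1})^{l}$. This suggests an alternative, cleaner route. Let $T'$ be the inverse semigroup of all words in $\{s_i^0,s_i^{0*}\}$, including the identity. First show $T\subseteq T'$, which the paper states. Then show $T'\subseteq T$: it suffices that $T$ contains the generators and is stable under multiplication by a single generator or its adjoint. That reduces the $16$-case check to $4\times 8$ elementary one-step computations, each governed by the rules above.

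\textbf{Inverse semigroup property.} Each element is a partial isometry, because each leg is one and tensor products preserve this; hence $ww^*w=w$. Uniqueness of the inverse follows once idempotents commute. The idempotents in $T$ are tensor products of diagonal projections (such as $S^{*n}S^n$ and $p_n$) with $\bm t^0$, and these commute. Alternatively, one can cite the standard fact that a $*$-closed semigroup of partial isometries whose source and range projections commute is an inverse semigroup. The elements $0$ and $\bm 1=B_1(0,\bm 0,\bm 0)$ lie in $T$.
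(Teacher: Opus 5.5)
Your direct strategy (one-leg rules, the sixteen products, $B_i(r,\bm n,\bm m)^*=B_i(-r,\bm m,\bm n)$, $ww^*w=w$ for partial isometries) is the paper's argument. Your one-leg rules are the correct ones: the paper's displayed formula counts $S^{*n_1}S^{m_2}$ twice when $k_1=k_2=0$, $m_1=n_2$, and returns $0$ when $k_1=k_2=1$. Your commuting-idempotents step also supplies the uniqueness of inverses, which the paper never checks.

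\textbf{The gap.} It lies in the ``cleaner route'' you adopt for the step you call the main obstacle: the identification $T=T'$ fails in both directions for $T$ as defined. The generators are not in $T$. First, $s_2^0,s_3^0,s_4^0$ carry a minus sign, while every $B_i(r,\bm n,\bm m)$ has matrix entries in $\{0,1\}$. Second, $s_1^0=S^*\otimes S^*\otimes\bm t$ could only be of type $1$ with $n_1=1$, $m_1=0$, and that forces the exponent $m_1-n_1=-1$. Conversely, $S^*\otimes S^*\otimes\bm t^{-1}=B_1(-1,(1,1),(0,0))\in T$ is not a word. Any letter other than $s_1^0,s_1^{0*}$ puts a factor $S^{*c}pS^d$ (or $0$) into some leg, and words in $s_1^0,s_1^{0*}$ alone give $S^{*a}S^b\otimes S^{*a}S^b\otimes\bm t^{\,a-b}$. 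So the paper's remark that each $B_i$ is a word cannot be invoked. Your factorization of $B_2$ is also wrong. Indeed $(S^*\otimes S^*\otimes\bm t)^k(1\otimes S^{*n}pS^m\otimes 1)(S\otimes S\otimes\bm t^{-1})^l=S^{*k}S^l\otimes S^{*(k+n)}pS^{l+m}\otimes\bm t^{\,k-l}$, which satisfies $n_2\ge n_1$, $m_2\ge m_1$, the reverse of the $B_2$ constraints. The $p$ must be paired with $S^{*(n_1-n_2)}S^{m_1-m_2}$ in the first leg.

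\textbf{The remedy.} Finish the direct check, which takes a few lines. Write $B_1=S^{*a}S^b\otimes S^{*a}S^b\otimes\bm t^{\,b-a}$.

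In $B_1B_2$, the second leg $S^{*a}S^b\,S^{*n_2}pS^{m_2}$ is nonzero only if $n_2\ge b$. Then $n_1\ge n_2\ge b$, so $\min(b,n_1)=b$, and the product is $S^{*(a+n_1-b)}S^{m_1}\otimes S^{*(a+n_2-b)}pS^{m_2}\otimes\bm t^{\,(b-a)+(m_1-n_1)}$. Both $n$-indices shift by $a-b$, so $n_1\ge n_2$, $m_1\ge m_2$ and $r=m_1-n_1$ all survive.

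In $B_2B_2'$, the second leg is nonzero iff $m_2=n_2'$. With $\mu=\min(m_1,n_1')$, the new first-leg indices are $N_1=n_1+n_1'-\mu$ and $M_1=m_1+m_1'-\mu$. They satisfy $N_1\ge n_1\ge n_2$ and $M_1\ge m_1'\ge m_2'$, and the exponent is $M_1-N_1$.

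The case $B_2B_1=(B_1^*B_2^*)^*$ reduces to $B_1B_2$ by $*$-closure, and type $3$ is symmetric. Products mixing types $2$ and $3$, or involving type $4$, have a factor $S^{*c}pS^d$ in both legs, so they are of type $4$ or $0$, where $r$ is unconstrained. Note also that this direct check works regardless of the sign convention for $r$.

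For uniqueness, add one line: an idempotent partial isometry is an orthogonal projection, so the idempotents of $T$ are exactly the diagonal $P_i(\bm n)$, and these commute. With the direct check replacing the $T'$ detour, your proof is complete.
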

	\begin{proof}
		Let $\bm{n} = (n_1, n_2), \bm{m} = (m_1, m_2) \in {\mathbb N}^2$ and $k_1, k_2 \in \{0, 1\}$. A routine computation shows that, 	\[
		({S^{*}}^{n_1}p^{k_1}S^{m_1})
		({S^{*}}^{n_2}p^{k_2}S^{m_2})
		=
		\bbone_{\{k_1=0,\; m_1\le n_2\}}
		{S^{*}}^{\,n_1+n_2-m_1}p^{k_2}S^{m_2}
		+
		\bbone_{\{k_2=0,\; m_1\ge n_2\}}
		{S^{*}}^{\,n_1}p^{k_1}S^{m_1-n_2+m_2}.
		\]
		
		\noindent Thus, $T$ is a semigroup. For $B_i(r, \bm{n}, \bm{m}) \in T$, we have
		\begin{eqnarray*}
			B_i(r,\bm{n}, \bm{m})^*&=&B_i(-r, \bm{m}, \bm{n}),\\
			B_i(r, \bm{n}, \bm{m})B_i(r, \bm{n}, \bm{m})^*B_i(r, \bm{n}, \bm{m})  & = & B_i(r, \bm{n}, \bm{m}),\\
			B_i(r, \bm{n}, \bm{m})^*B_i(r, \bm{n}, \bm{m})B_i(r, \bm{n}, \bm{m})^*  & = & B_i(r, \bm{n}, \bm{m}).
		\end{eqnarray*}
		Thus, $T$ is an inverse semigroup.
	\end{proof}
	\begin{lmma}\label{Projections in T}
		For $1 \leq i \leq 4$, the element $B_i(r, \bm{n}, \bm{m})$ is a projection in $T$ iff $r = 0$ and $\bm{n} = \bm{m}$.
	\end{lmma}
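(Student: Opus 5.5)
The plan is to use the fact that in an inverse semigroup the projections are exactly the self-adjoint idempotents, and to read off the constraints from the tensor-product form of $B_i(r,\bm{n},\bm{m})$.

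For the easy direction, suppose $r=0$ and $\bm{n}=\bm{m}$. Then each tensor factor has the form ${S^*}^{n}S^{n}$ or ${S^*}^{n}pS^{n}$. Since $S^n{S^*}^n=1$ for the left shift, each of these is a self-adjoint idempotent, and in fact an orthogonal projection onto $\overline{\operatorname{span}}\{e_k:k\ge n\}$ or onto $\bbc e_n$ respectively. The third factor is $\bm{t}^0=1$. Their tensor product is therefore a projection, and it lies in $T$ by definition; the side conditions ($r=m_1-n_1$ etc.) are automatically met when $r=0$ and $\bm{n}=\bm{m}$.

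For the converse, suppose $B=B_i(r,\bm{n},\bm{m})$ satisfies $B=B^*=B^2$.
\begin{itemize}
\item I will use the formula $B_i(r,\bm{n},\bm{m})^*=B_i(-r,\bm{m},\bm{n})$ from the proof of Proposition \ref{Semigroup U}, together with the fact that a nonzero elementary tensor $a\otimes b\otimes c$ determines its factors up to scalars whose product is $1$.
\item All factors here are partial isometries with matrix entries in $\{0,1\}$, and the third factor is the unitary $\bm{t}^r$. So $B=B^*$ forces $\bm{t}^r=\lambda\bm{t}^{-r}$, hence $\bm{t}^{2r}=\lambda$. Since $\bm{t}^{2r}$ is a scalar only when $r=0$, we get $r=0$ and $\lambda=1$.
\item The scalars attached to the first two factors then multiply to $1$. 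Because all entries are $0$ or $1$, those scalars are $1$, so each $\ell^2(\bbn)$ factor is self-adjoint.
\item It remains to check that ${S^*}^{n}S^{m}$ (which maps $e_k\mapsto e_{k-m+n}$ for $k\ge m$) and ${S^*}^{n}pS^{m}=p_{mn}$ are self-adjoint only when $n=m$. For ${S^*}^nS^m$, its adjoint is ${S^*}^mS^n$, and comparing the images of $e_k$ for large $k$ gives $n=m$. For $p_{mn}$, we have $p_{mn}^*=p_{nm}$, so $m=n$.
\item Applying this to both first two factors yields $\bm{n}=\bm{m}$.
\end{itemize}

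The main obstacle is the elementary-tensor scalar bookkeeping in the converse, i.e. making sure that $B=B^*$ really splits factorwise without hidden scalar ambiguity. This is handled by noting that all matrix entries are nonnegative, so every scalar is positive, and the unitary factor pins $r=0$. Alternatively, evaluating $\langle Be_{(k_1,k_2,j)},e_{(k_1',k_2',j')}\rangle$ on basis vectors gives the same conclusion directly: self-adjointness forces $j'-j=-(j-j')$ shifts to vanish and the index shifts to vanish, i.e. $r=0$ and $\bm{n}=\bm{m}$.
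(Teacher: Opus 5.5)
Your proposal is correct and is essentially the paper's argument: compare $B_i(r,\bm{n},\bm{m})$ with its adjoint $B_i(-r,\bm{m},\bm{n})$ and read off $r=0$ and $\bm{n}=\bm{m}$. Your factorwise comparison of elementary tensors (the unitary factor $\bm{t}^r$ forces $r=0$, and the $0$--$1$ matrix entries force the scalars to be $1$) supplies the justification that the paper leaves implicit.

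One caveat, which the paper shares: as $B_1$ is written, both $\ell^2(\mathbb{N})$ factors carry the exponents $n_1,m_1$. So for $i=1$ your argument only yields $n_1=m_1$, and concluding $\bm{n}=\bm{m}$ needs the convention $n_1=n_2$, $m_1=m_2$ for $B_1$, which the paper uses tacitly (e.g.\ in the description of $E_1$).
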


	\begin{proof}
		Each $B_{i}(0,\bm{n},\bm{n})$ is clearly a projection in $T$. If $B_i(r, \bm{n}, \bm{m})$ is a projection, the identity $B_i(r, \bm{n}, \bm{m}) = B_i(r, \bm{n},\bm{m})^*=B_i(-r,\bm{m},\bm{n})$ implies $r = 0$ and $\bm{n}=\bm{m}$.
	\end{proof}
	\noindent Let $P_i(\bm{n}) = B_i(0,\bm{n},\bm{n})$ for $1 \leq i \leq 4$ and $\bm{n} \in \mathbb{N}^2$. The set of idempotents in $T$ is $E = \bigcup_{i=1}^4 E_i$, where
	\begin{align*}
		E_1 &= \{P_1(\bm{n}) : n_1 = n_2\}, & E_2 &= \{P_2(\bm{n}) : n_1 \geq n_2\},\\
		E_3 &= \{P_3(\bm{n}) : n_1 \leq n_2\}, & E_4 &= \{P_4(\bm{n}) : \bm{n} \in \mathbb{N}^2\}.
	\end{align*}
	For a subsemigroup $\Lambda \subseteq E \setminus \{0\}$, define the filter 
	$$ A_{\Lambda} = \{f \in E : fe = e \text{ for some } e \in \Lambda\}. $$
	Thus $\chi_{_{A_{\Lambda}}}$ is a character on $E$. Moreover, $\Lambda_1 \subseteq \Lambda_2$ implies $A_{\Lambda_1} \subseteq A_{\Lambda_2}$.
	\begin{dfn}
		For $\bm{k}= (k_1, k_2) \in \overline{\mathbb N}^2$, define the subsemigroups $\Lambda(\bm{k})$ of $E$ as follows:
		\begin{equation}\label{LambdaK}
			\begin{aligned}	
				\Lambda(\bm{k}):=\Lambda(k_1, k_2):=&\{P_4(\bm{k})\}\,\,& & \text{if } k_1, k_2 \in {\mathbb N}, \\
				\Lambda(k_1, \infty):=&\{P_3((k_1, l_2)) : k_1 \leq l_2 \in {\mathbb N}\}\,\,& &\text{if } k_1 \in {\mathbb N}, k_2 = \infty,\\
				\Lambda(\infty, k_2):=& \{P_2((l_1, k_2)) : k_2 \leq l_1 \in {\mathbb N}\}\,\,& &\text{if } k_1 = \infty, k_2 \in {\mathbb N},\\
				\Lambda(\infty, \infty):= &\{P_1((l_1, l_2)) : l_1 = l_2 \in {\mathbb N}\} = E_1 \,\, & & \text{if } k_1 =  k_2 = \infty.
			\end{aligned}	
		\end{equation}
	\end{dfn}

	\begin{equation}\label{ALambdaK}
		\begin{aligned}	
			A_{\Lambda(k_1, k_2)} & = \{P_1(\bm{n}) : n_1 = n_2 \leq \min\{k_1, k_2\}\} \cup \{P_2(\bm{n}) : k_2 = n_2 \leq n_1 \leq k_1\},& \\
			&\cup \{P_3(\bm{n}) : k_1 = n_1 \leq n_2 \leq k_2\} \cup \Lambda(k_1, k_2),&\\
			A_{\Lambda(k_1, \infty)} & =\{P_1(\bm{n}) : n_1 = n_2 \leq k_1\} \cup \Lambda(k_1, \infty) \subset E_1 \cup E_3,& \\
			A_{\Lambda(\infty, k_2)} & =\{P_1(\bm{n}) : n_1 = n_2 \leq k_2\} \cup \Lambda(\infty, k_2) \subset E_1 \cup E_2,& \\
			A_{\Lambda(\infty, \infty)} & =\Lambda(\infty, \infty) = E_1.& 
		\end{aligned}	
	\end{equation}
	\begin{rmrk}\label{Minimum Projections}
		For $k_1, k_2 \in \mathbb{N}$, the order on $E$ implies:
		\begin{enumerate}[(i)]
			\item $P_4(\bm{k})$ is the minimum element of $A_{\Lambda(\bm{k})}$ since  $P_i(\bm{n})P_4(\bm{k}) = P_4(\bm{k})$ for all $P_i(\bm{n}) \in A_{\Lambda(\bm{k})}$.
			\item $A_{\Lambda(k_1, \infty)}$, $A_{\Lambda(\infty, k_2)}$, and $A_{\Lambda(\infty, \infty)}$ possess no minimum elements.
		\end{enumerate}
		\end{rmrk}
	\begin{ppsn}\label{Image}
		The collection of all ultra filters in $E$ is precisely the set $\{A_{\Lambda(\bm{k})}:\bm{k}\in\overline{\mathbb N}^2\}$.
	\end{ppsn}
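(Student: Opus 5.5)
The strategy is to make the order on $E$ completely explicit and then read off the ultrafilters. Each projection $P_i(\bm{n})$ is a tensor product of two projections on $\ell^2(\bbn)$, tensored with the identity. Each factor is either the tail projection $S^{*n}S^{n}=1_{[n,\infty)}$ or the rank-one projection $S^{*n}pS^{n}=p_n$. So $P_i(\bm{n})$ is multiplication by the characteristic function of a subset $X_i(\bm{n})\subseteq\bbn^2$:
\begin{itemize}
\item $X_1(\bm{n})=[n_1,\infty)\times[n_1,\infty)$ for $n_1=n_2$;
\item $X_2(\bm{n})=[n_1,\infty)\times\{n_2\}$ for $n_1\ge n_2$;
\item $X_3(\bm{n})=\{n_1\}\times[n_2,\infty)$ for $n_1\le n_2$;
\item $X_4(\bm{n})=\{(n_1,n_2)\}$.
\end{itemize}
The product of projections is intersection of sets, and $e\le f$ means $X_e\subseteq X_f$. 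I will first record this dictionary. It reduces every question about filters in $E$ to questions about this explicit family of subsets of $\bbn^2$, closed under intersection together with $\emptyset$.

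\textbf{Step 1: the listed filters are ultrafilters.} First I verify the description \eqref{ALambdaK} of $A_{\Lambda(\bm{k})}$ by checking which sets $X_f$ contain some $X_e$ with $e\in\Lambda(\bm{k})$.

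For $\bm{k}\in\bbn^2$, the filter $A_{\Lambda(\bm{k})}$ is the principal filter of the minimal nonzero projection $P_4(\bm{k})$ (Remark \ref{Minimum Projections}). Any filter containing it consists of sets meeting $\{\bm{k}\}$, i.e.\ sets containing $\bm{k}$, so it is already all of them. Hence this filter is maximal.

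For $\bm{k}=(k_1,\infty)$, suppose a filter $B\supseteq A_{\Lambda(k_1,\infty)}$ contains some $f\notin A_{\Lambda(k_1,\infty)}$. Then $f\cdot P_3((k_1,l_2))\ne 0$ for all $l_2$, so $X_f$ meets every $\{k_1\}\times[l_2,\infty)$. Going through the four shapes, this forces $f$ to be either $P_1(\bm{n})$ with $n_1\le k_1$, or $P_3((k_1,n_2))$; both already lie in $A_{\Lambda(k_1,\infty)}$. This is a contradiction, so the filter is maximal. The case $(\infty,k_2)$ is symmetric. For $(\infty,\infty)$, any $f$ meeting all the diagonal tails must be of type $1$, since a single row, column or point eventually misses them. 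So $E_1$ is maximal.

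\textbf{Step 2: every ultrafilter is one of these.} Let $A$ be an ultrafilter, and set $k_1=\sup\{n_1: \text{some } P_i(\bm{n})\in A \text{ forces first coordinate}\ge n_1\}$, and $k_2$ similarly, both in $\overline{\bbn}$. I argue by cases on which types occur in $A$.

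If $A$ contains some $P_4(\bm{k})$, then $A\subseteq A_{\Lambda(\bm{k})}$, since every member must intersect $\{\bm{k}\}$. Maximality then gives equality.

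Otherwise, suppose $A$ contains some $P_3$. Two $P_3$'s with different first index have product $0$, so all of them share a first index $k_1$. If the set of $n_2$ occurring is bounded, then $A$ cannot contain a $P_2$: a $P_2$ times a $P_3$ is a $P_4$ or $0$, and $A$ contains no $P_4$. Then $A\cup\{P_3((k_1,l_2)):l_2\}$ generates a strictly larger filter, contradicting maximality. So $A\subseteq A_{\Lambda(k_1,\infty)}$, and maximality gives equality. The $P_2$ case is symmetric.

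If $A\subseteq E_1$, then $A\subseteq E_1=A_{\Lambda(\infty,\infty)}$, so $A=E_1$.

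A cleaner uniform route is to observe that every nonzero filter $A$ is contained in some $A_{\Lambda(\bm{k})}$, and then use maximality. I expect this containment step to be the main obstacle: it requires careful bookkeeping of which products are nonzero, in particular the facts that $P_2\cdot P_3$ is a $P_4$ or $0$ and that two $P_3$'s with distinct first index multiply to $0$. I will organize it through the set dictionary, where it becomes the statement that a family of these sets with the finite intersection property either has nonempty total intersection, which gives a point $\bm{k}$, or escapes to infinity along a row, a column, or the diagonal.
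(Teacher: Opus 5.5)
Your proposal is correct and follows essentially the same route as the paper: you first show each $A_{\Lambda(\bm{k})}$ is maximal by producing, for any projection outside it, an element of $A_{\Lambda(\bm{k})}$ with zero product, and you then sort an arbitrary ultrafilter according to whether it meets $E_4$, $E_3$, $E_2$, or only $E_1$; identifying $P_i(\bm{n})$ with the indicator of $X_i(\bm{n})\subseteq\bbn^2$ simply makes the zero-product checks transparent. In the $E_3$ case, the direct inclusion $A\subseteq A_{\Lambda(k_1,\infty)}$ together with maximality of $A$ already finishes the argument (this inclusion holds because $A$ contains no $P_2$ since $P_2P_3\in\{P_4,0\}$, all $P_3$'s in $A$ share the first index $k_1$, and every $P_1(\bm{n})\in A$ has $n_1\le k_1$), so you can drop the bounded-$n_2$ detour and the unused $\sup$ definition of $k_1$.
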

	\begin{proof}
		We begin by demonstrating that $A_{\Lambda(\bm{k})}$ is an ultrafilter for every $\bm{k} \in \overline{\mathbb{N}}^2$. 
	  Let $\mathcal{B}$ be a filter on $E$ such that $A_{\Lambda(\bm{k})} \subseteq \mathcal{B}$. Suppose, toward a contradiction, that there exists an element $P_i(\bm{m}) \in \mathcal{B} \setminus A_{\Lambda(\bm{k})}$. We prove that the product of $P_i(\bm{m}) \in \mathcal{B}$ and some element of $A_{\Lambda(\bm{k})} \subseteq \mathcal{B}$ is zero. Thus, $0 \in \mathcal{B}$, which contradicts the definition of a filter. Consider the case $k_1 \in \mathbb{N},k_2=\infty$. Hence, $\Lambda(\bm{k}) = \{P_3(k_1, l_2) : k_1 \leq l_2 \in \mathbb{N}\}$. Let $P_i(\bm{m}) \in \mathcal{B} \setminus A_{\Lambda(\bm{k})}$. Suppose $i=1$. The condition $m_1 = m_2 > k_1$ implies that $P_3(k_1, k_1) \in A_{\Lambda(\mathbf{k})} \subseteq \mathcal{B}$. Hence, $P_1(\bm{m})P_3(k_1, k_1) = 0$, as required. The other cases follow similarly as outlined in the following. Suppose $k_1, k_2 \in \mathbb{N}$. Then $P_{i}(\mathbf{m})P_{4}(\bm{k}) = 0$ for $P_4(\bm{k})\in A_{\Lambda(\bm{k})}$. Suppose $k_1 \in \mathbb{N}$, $k_2 = \infty$. Then $P_i(\bm{m})P_3(k_1, k_1) = 0$  for $i=1,3,\, P_3(k_1, k_1)\in A_{\Lambda(\bm{k})}$; and $P_i(\bm{m})P_3(k_1, l_2) = 0$  for $i=2,4,\,l_2>m_2,\, P_3(k_1, l_2)\in A_{\Lambda(\bm{k})}$. The case $k_1 = \infty$, $k_2 \in \mathbb{N}$ is similar to the previous case. Suppose $k_1=k_2=\infty$. Then $P_1(\mathbf{m})P_i(\mathbf{n}) = 0$ for $i\in\{1,2,3\},\,m_1 = m_2 > \max\{n_1, n_2\},\,P_1(\bm{m}) \in A_{\Lambda(\infty, \infty)}$; while the case $i=4$ is not possible. Hence, we conclude that $\phi(\overline{\mathbb{N}}^2) \subseteq \widehat{E}_{\infty}$. 
	  
 	  For the converse, let $y \in \widehat{E}_{\infty}$ and $A_y = y^{-1}(\{1\})$. As $A_y$ is an ultrafilter on $E$, we check four cases:   (i) $A_y \cap E_4 \neq \emptyset$; (ii) $A_y \cap E_4 = \emptyset$, $A_y \cap E_3 \neq \emptyset$; (iii) $A_y \cap (E_4 \cup E_3) = \emptyset$, $A_y \cap E_2 \neq \emptyset$; and (iv) $A_y \cap E_4 = A_y \cap E_3 = A_y \cap E_2 = \emptyset$. In each case, we prove that $y=\phi(\bm{k})$ for some $\bm{k}\in\overline{\bbn}^2$. In case (ii), observe that $A_y \subseteq E_1 \cup E_3$. Thus, either $E_1 \subseteq A_y$ or $A_{\Lambda(k_1, \infty)} \subseteq A_y$ for some $k_1 \in \mathbb{N}$. If $E_1 \subseteq A_y$, then $A_{\Lambda(\infty, \infty)} = E_1 \subseteq A_y$; by maximality, $A_y = A_{\Lambda(\infty, \infty)}$ and $y = \phi(\infty, \infty)$. Otherwise, there exists $k_1 \in \mathbb{N}$ such that $A_{\Lambda(k_1, \infty)} \subseteq A_y$. Since $A_{\Lambda(k_1, \infty)}$ is an ultrafilter, $A_y = A_{\Lambda(k_1, \infty)}$, which gives $\phi(k_1, \infty) = 1_{A_{\Lambda(k_1, \infty)}} = 1_{A_y} = y$, as required. For cases (i), (iii) and (iv), one can verify that $y=\phi(\bm{k}),\,\phi(\infty,k_2)$ and $\phi(\infty,\infty)$ work respectively, where $k_1,k_2\in\bbn$.				\end{proof}
	\begin{ppsn}\label{Homeomorphism}
		The map $\phi:\overline{\mathbb N}^2 \longrightarrow \widehat{E}_{\infty}\,;\,\phi(\bm{k}) = 1_{A_{\Lambda(\bm{k})}}$ is a homeomorphism.
	\end{ppsn}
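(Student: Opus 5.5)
By Proposition \ref{Image}, $\phi$ maps $\overline{\mathbb N}^2$ onto $\widehat{E}_{\infty}$. It is injective because distinct $\bm{k}$ give distinct filters $A_{\Lambda(\bm{k})}$, as one reads off from (\ref{ALambdaK}). For example, the set of $P_1$'s in $A_{\Lambda(\bm{k})}$ determines $\min\{k_1,k_2\}$. The presence of a $P_4$, $P_3$ or $P_2$ element then determines which of the four types $\bm{k}$ is, and the indices of these elements recover $\bm{k}$ itself. So $\phi$ is a bijection.

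The space $\overline{\mathbb N}^2$ is compact and $\widehat{E}_0$ is Hausdorff. Hence it suffices to prove that $\phi$ is continuous. Then $\phi$ is a closed continuous bijection onto its image, hence a homeomorphism onto $\widehat{E}_\infty$. As a byproduct, $\widehat{E}_\infty$ is compact, hence closed, so $\widehat{E}_{\mathrm{tight}}=\widehat{E}_\infty$.

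The topology on $\widehat{E}_0$ is the product topology from $\{0,1\}^E$. Continuity of $\phi$ therefore amounts to continuity, for each fixed projection $e=P_i(\bm{n})\in E$, of the map
\[
\bm{k}\longmapsto \phi(\bm{k})(e)=1_{A_{\Lambda(\bm{k})}}(e).
\]
Equivalently, the set $U_e=\{\bm{k}\in\overline{\mathbb N}^2 : e\in A_{\Lambda(\bm{k})}\}$ must be clopen. Using (\ref{ALambdaK}), I would compute $U_e$ for each type of projection.
\begin{itemize}
\item $e=P_4(\bm{n})$: $U_e=\{\bm{n}\}$, an isolated point since $n_1,n_2\in\mathbb N$.
\item $e=P_3(\bm{n})$ with $n_1\le n_2$: $U_e=\{n_1\}\times\{k_2: k_2\ge n_2\}$, with $k_2\in\overline{\mathbb N}$. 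This is $\{n_1\}\times[n_2,\infty]$, which is clopen.
\item $e=P_2(\bm{n})$ with $n_1\ge n_2$: symmetrically, $U_e=[n_1,\infty]\times\{n_2\}$.
\item $e=P_1(\bm{n})$ with $n_1=n_2=n$: $U_e=\{\bm{k}:\min\{k_1,k_2\}\ge n\}=[n,\infty]^2$, again clopen.
\end{itemize}
In each case one checks that the infinite-coordinate filters in (\ref{ALambdaK}) match the limits. For instance, $P_3(n_1,n_2)\in A_{\Lambda(n_1,\infty)}$ precisely because $P_3(n_1,l_2)$ lies in $\Lambda(n_1,\infty)$ for $l_2\ge n_2$.

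The main obstacle is the bookkeeping of the case $e=P_3(\bm{n})$ or $e=P_2(\bm{n})$ when $\bm{k}$ has an infinite coordinate. Here one must verify from (\ref{ALambdaK}), together with the product formula in Proposition \ref{Semigroup U}, that membership is exactly as listed. In particular, one must check that $P_3(\bm{n})\notin A_{\Lambda(\infty,\infty)}$ and that $P_3(\bm{n})\notin A_{\Lambda(k_1,k_2)}$ when $k_2<n_2$. These facts follow directly from the explicit lists in (\ref{ALambdaK}). Once all sets $U_e$ are shown to be clopen, continuity follows, completing the proof.
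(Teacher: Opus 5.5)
Your proposal is correct and follows essentially the paper's route: surjectivity from Proposition~\ref{Image}, the compact-to-Hausdorff reduction, and coordinatewise continuity into $\{0,1\}^E$, checked projection by projection using (\ref{ALambdaK}). Computing the clopen sets $U_e$ exactly is just the open-set form of the paper's net argument, and it is a bit more careful: for $P_2(\bm{n})$ and $P_4(\bm{n})$ near $(k_1,\infty)$ the paper's requirement $k_{\alpha 2}\ge n_2$ really needs to be strict. Your explicit injectivity check also fills in a point the paper leaves implicit.
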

		\begin{proof}
		By Proposition~\ref{Image}, $\phi$ is a bijection. Since $\overline{\mathbb{N}}^2$ is compact and $\widehat{E}_{\infty}$ is Hausdorff, it suffices to show $\phi$ is continuous. Let $\{\bm{k}_{\alpha}\}$ be a net in $\overline{\mathbb{N}}^2$ with $\bm{k}_{\alpha} \to \bm{k}$. Continuity follows if $\phi(\bm{k}_{\alpha})(P_{i}(n)) \to \phi(\bm{k})(P_{i}(n))$ for any projection $P_{i}(n) \in E$. 
				\begin{enumerate}[(i)]
			\item If $\bm{k} \in \mathbb{N}^2$, the net is eventually constant, so the result is immediate.
			\item If $\bm{k} = (k_1, \infty)$, then
			there exists $\beta \in \Lambda$ such that  
			$k_{\alpha 1} = k_1$ and $k_{\alpha 2} \ge n_2$ for all $\alpha \ge \beta$. Hence  
			$P_i(\bm{n}) \in A_{\Lambda(k_1, \infty)}$ if and only if  
			$P_i(\bm{n}) \in A_{\Lambda(k_1, k_{\alpha 2})}$ for all $\alpha \ge \beta$; that is,  
			\[
			\phi(\bm{k}_{\alpha})(P_i(\bm{n})) \longrightarrow \phi(\bm{k})(P_i(\bm{n})).
			\]
			\end{enumerate}
		The cases $\bm{k} = (\infty, k_2)$ and $\bm{k} = (\infty, \infty)$ are analogous. Thus, $\phi$ is a homeomorphism.
		\end{proof}

	\begin{ppsn}
		$\widehat{E}_{\mathrm{tight}} = \overline{\widehat{E}}_{\infty}=\widehat{E}_{\infty}$.
	\end{ppsn}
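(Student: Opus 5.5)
The first equality $\widehat{E}_{\mathrm{tight}}=\overline{\widehat{E}}_{\infty}$ is just the definition of tight characters, so the only thing to prove is that $\widehat{E}_{\infty}$ is already closed in $\widehat{E}_{0}$. I will get this from compactness rather than from a direct description of limits of characters.

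By Proposition~\ref{Homeomorphism}, the map $\phi:\overline{\mathbb N}^2\to\widehat{E}_{\infty}$ is a homeomorphism onto $\widehat{E}_\infty$ with its subspace topology inherited from $\widehat{E}_0\subseteq\{0,1\}^{E}$. Since $\overline{\mathbb N}$ is compact, $\overline{\mathbb N}^2$ is compact. Hence $\widehat{E}_{\infty}=\phi(\overline{\mathbb N}^2)$ is a compact subset of $\widehat{E}_0$.

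Next I use that $\widehat{E}_0$ is Hausdorff. The product topology on $\{0,1\}^E$ is Hausdorff, so its subspace $\widehat{E}_0$ is Hausdorff too. A compact subset of a Hausdorff space is closed, so $\widehat{E}_{\infty}$ is closed in $\widehat{E}_0$. Therefore $\overline{\widehat{E}}_{\infty}=\widehat{E}_{\infty}$, and combining with the definition gives $\widehat{E}_{\mathrm{tight}}=\overline{\widehat{E}}_{\infty}=\widehat{E}_{\infty}$.

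The only real input is the continuity of $\phi$, which is already established in Proposition~\ref{Homeomorphism}, so no step here is a genuine obstacle. The one point to be careful about is the topology: the closure defining $\widehat{E}_{\mathrm{tight}}$ is taken in $\widehat{E}_0$, and the continuity in Proposition~\ref{Homeomorphism} must be with respect to that same pointwise-convergence topology on $E$. That proposition checks convergence on each projection $P_i(\bm n)$, which is exactly pointwise convergence, so the two topologies agree.
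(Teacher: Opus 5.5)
Your proof is correct and is essentially the paper's argument. The paper also gets compactness of $\widehat{E}_{\infty}=\phi(\overline{\mathbb N}^2)$ from Proposition~\ref{Homeomorphism} and then concludes closedness because $\widehat{E}_0$ is Hausdorff. Your remark that the continuity checked there is exactly pointwise convergence on $E$, the topology in which the closure is taken, is a worthwhile check that the paper leaves implicit.
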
	
	\begin{proof}
		By Proposition~\ref{Homeomorphism}, $\widehat{E}_{\infty} = \phi(\overline{\mathbb{N}}^{2})$ is compact. Since $\widehat{E}_0$ is Hausdorff, $\widehat{E}_{\infty}$ is closed in $\widehat{E}_0$, and the claim follows.
	\end{proof}

	\noindent We now determine the transformation groupoid $\Sigma|_{\widehat{E}_{\mathrm{tight}}}$.
	\begin{lmma}\label{Size of X}
		Let  $1\leq i \leq 4$, $B_i(r,\bm{n},\bm{m}) \in T$, and $\bm{k} = (k_1, k_2) \in  \overline{\mathbb{N}}^2$. Then
		\begin{enumerate}[(i)]
			\item $(\phi(\bm{k}), B_1(r,\bm{n},\bm{m})) \in \Sigma$ iff $n_1 = n_2 \leq \min\{k_1, k_2\}$.
			\item $(\phi(\bm{k}), B_2(r,\bm{n},\bm{ m})) \in \Sigma$ iff $k_2 = n_2 \leq n_1 \leq  k_1$.
			\item $(\phi(\bm{k}), B_3(r,\bm{n},\bm{m})) \in \Sigma$ iff $k_1 = n_1 \leq n_2 \leq  k_2$.
			\item $(\phi(\bm{k}), B_4(r,\bm{n},\bm{m})) \in \Sigma$ iff $n = k$.
		\end{enumerate}
	\end{lmma}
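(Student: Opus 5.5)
By definition, $(\phi(\bm{k}),s)\in\Sigma$ exactly when $\phi(\bm{k})\in D_s$, that is, when $\phi(\bm{k})(ss^*)=1$. Since $\phi(\bm{k})=1_{A_{\Lambda(\bm{k})}}$, this is the condition $ss^*\in A_{\Lambda(\bm{k})}$. The plan therefore has two steps. First compute $ss^*$ for $s=B_i(r,\bm{n},\bm{m})$. Then read off membership from the explicit description \eqref{ALambdaK} of the ultrafilters.

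For the first step we use the adjoint formula $B_i(r,\bm{n},\bm{m})^*=B_i(-r,\bm{m},\bm{n})$ from Proposition~\ref{Semigroup U} together with the product rule for ${S^*}^{a}p^{k}S^{b}$ proved there. Working in each tensor factor, we have ${S^*}^{n}S^{m}\,{S^*}^{m}S^{n}={S^*}^{n}S^{n}$ and ${S^*}^{n}pS^{m}\,{S^*}^{m}pS^{n}={S^*}^{n}pS^{n}$, while the $\bm{t}$-factors give $\bm{t}^r\bm{t}^{-r}=1$. Hence
\[
B_i(r,\bm{n},\bm{m})B_i(r,\bm{n},\bm{m})^*=B_i(0,\bm{n},\bm{n})=P_i(\bm{n}).
\]
The constraints built into the definition of $B_i$ ensure that $P_i(\bm{n})$ lies in $E_i$. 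For $i=1$ the first index appears in both slots, so $P_1(\bm{n})$ is $P_1((n_1,n_1))$; in the notation of $E_1$ this is the case $n_1=n_2$. For $i=2$ we have $n_1\ge n_2$, and for $i=3$ we have $n_1\le n_2$.

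For the second step, the question becomes whether $P_i(\bm{n})\in A_{\Lambda(\bm{k})}$. We split into the four cases $\bm{k}\in\mathbb{N}^2$, $(k_1,\infty)$, $(\infty,k_2)$ and $(\infty,\infty)$, and read off membership from \eqref{ALambdaK}:
\begin{enumerate}[(i)]
\item For $i=1$, membership holds exactly when $n_1=n_2\le\min\{k_1,k_2\}$. When a $k_j$ is $\infty$ we use the convention $\min\{k,\infty\}=k$, and all of $E_1$ is allowed when $\bm{k}=(\infty,\infty)$.
\item For $i=2$, elements of $E_2$ appear only when $k_2\in\mathbb{N}$, and then exactly those with $n_2=k_2\le n_1\le k_1$. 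For $k_1=\infty$ this set is $\Lambda(\infty,k_2)$.
\item For $i=3$, the situation is symmetric to $i=2$: membership holds exactly when $k_1=n_1\le n_2\le k_2$.
\item For $i=4$, elements of $E_4$ occur only when $\bm{k}\in\mathbb{N}^2$, and then only $P_4(\bm{k})$ itself. So the condition is $\bm{n}=\bm{k}$, which is automatically false when some $k_j=\infty$, consistent with the statement.
\end{enumerate}

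The main obstacle is bookkeeping at the boundary cases involving $\infty$. We must check that the stated inequalities, with $\infty$ allowed, reproduce exactly the listed sets. For example, when $k_2=\infty$ the condition "$k_2=n_2$" in (ii) is unsatisfiable, matching the fact that $A_{\Lambda(k_1,\infty)}\cap E_2=\emptyset$. We must also make sure that \eqref{ALambdaK} really lists all of $A_{\Lambda(\bm{k})}$. This requires checking from the product formula that $P_j(\bm{l})P_i(\bm{n})=P_i(\bm{n})$ holds for $P_i(\bm{n})\in\Lambda(\bm{k})$ exactly when $P_j(\bm{l})$ is in the listed set. We do this by computing the products factorwise: ${S^*}^aS^a$ dominates ${S^*}^bS^b$ and ${S^*}^bpS^b$ iff $a\le b$, while ${S^*}^apS^a$ dominates ${S^*}^bpS^b$ only when $a=b$.
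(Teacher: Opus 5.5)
Your proposal is correct and is essentially the paper's proof: both reduce $(\phi(\bm{k}),B_i(r,\bm{n},\bm{m}))\in\Sigma$ to the condition $B_i(r,\bm{n},\bm{m})B_i(r,\bm{n},\bm{m})^*=P_i(\bm{n})\in A_{\Lambda(\bm{k})}$, and then read the conditions off from \eqref{ALambdaK}. Your factorwise check of \eqref{ALambdaK} and of the boundary cases with $\infty$ (including the normalization $n_2=n_1$ for $B_1$) fills in details the paper leaves implicit.
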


	\begin{proof}
	Let $1\leq i \leq 4$, $B_i(r,\bm{n},\bm{m}) \in T$, and $\bm{k} = (k_1, k_2) \in \overline{\mathbb{N}}^{2}$. Then,
$$
(\phi(\bm{k}), B_i(r,\bm{n},\bm{m})) \in \Sigma\text{ iff }P_i(\bm{n}) \in A_{\Lambda(\bm{k})},\quad (\phi(\bm{k}), P_i(\bm{n})) \in \Sigma\text{ iff }P_i(\bm{n}) \in A_{\Lambda(\bm{k})}.
$$		Hence,  $(\phi(\bm{k}), B_i(r,\bm{n},\bm{m})) \in\Sigma \text{ iff } P_i(\bm{n}) \in A_{\Lambda(\bm{k})}.$ The claim now follows from equation~(\ref{ALambdaK}). 
	\end{proof} 
	\begin{lmma}\label{Lemma 1}
		Let $1 \leq i \leq 4$ and $B_i(r,\textbf{n},\textbf{m}) \in T$. Then,
		\begin{enumerate}[(i)]
			\item $D_{B_1(r,\textbf{n},\textbf{m})} = D_{P_1(\textbf{n})} = \{\phi(\textbf{k}) : \textbf{k} \in  \overline{\mathbb{N}}^2, n_1 = n_2 \leq \min\{k_1, k_2\} \}$,
			\item  $D_{B_2(r,\textbf{n},\textbf{m})} = D_{P_2(\textbf{n})} = \{\phi(k_1, n_2) : k_1 \in  \overline{\mathbb{N}}, k_1 \geq n_1\geq n_{2}\}$,
			\item  $D_{B_3(r,\textbf{n},\textbf{m})} = D_{P_3(\textbf{n})} = \{\phi(n_1, k_2) : k_2 \in  \overline{\mathbb{N}}, n_{1}\leq n_2 \leq k_2\}$,
			\item  $D_{B_4(r,\textbf{n},\textbf{m})} = D_{P_4(\textbf{n})} = \{\phi(\textbf{n})\}$.
		\end{enumerate}
	\end{lmma}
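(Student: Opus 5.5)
Since $\widehat{E}_{\mathrm{tight}}=\widehat{E}_\infty=\phi(\overline{\mathbb N}^2)$, I would regard $D_s$ throughout as the set of tight characters with $x(ss^*)=1$; this is the set relevant for $\Sigma|_{\widehat{E}_{\mathrm{tight}}}$. The proof then has two steps.

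\textbf{Step 1: reduce $D_s$ to $D_{P_i(\bm{n})}$.} By definition, $D_s=\{x: x(ss^*)=1\}$. So I would compute $B_i(r,\bm{n},\bm{m})B_i(r,\bm{n},\bm{m})^*$. Using $B_i(r,\bm{n},\bm{m})^*=B_i(-r,\bm{m},\bm{n})$ (from Proposition~\ref{Semigroup U}), this is a tensor product of three factors:
\begin{enumerate}[(i)]
\item in each $\ell^2(\mathbb N)$ slot we get ${S^*}^{n}p^{k}S^{m}{S^*}^{m}p^{k}S^{n}$, which by the product formula in Proposition~\ref{Semigroup U} equals ${S^*}^{n}p^{k}S^{n}$ (using $S^mS^{*m}=1$ and $p^2=p$);
\item in the $\ell^2(\mathbb Z)$ slot we get $\bm{t}^r\bm{t}^{-r}=1$.
\end{enumerate}
Hence $ss^*=P_i(\bm{n})$, and therefore $D_{B_i(r,\bm{n},\bm{m})}=D_{P_i(\bm{n})}$. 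This also agrees with Lemma~\ref{Size of X}.

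\textbf{Step 2: describe $D_{P_i(\bm{n})}$ in terms of $\bm{k}$.} We have $D_{P_i(\bm{n})}=\{\phi(\bm{k}) : P_i(\bm{n})\in A_{\Lambda(\bm{k})}\}$. I would read this off equation~(\ref{ALambdaK}), or equivalently from Lemma~\ref{Size of X}, case by case.
\begin{enumerate}[(i)]
\item For $i=1$: the condition is $n_1=n_2\le\min\{k_1,k_2\}$. When some $k_j=\infty$, I would check that the $P_1$-part of $A_{\Lambda(k_1,\infty)}$, $A_{\Lambda(\infty,k_2)}$, and $E_1$ has the same description, with the convention $\min\{k,\infty\}=k$.
\item For $i=2$: $P_2(\bm{n})$ with $n_1\ge n_2$ lies in $A_{\Lambda(\bm{k})}$ exactly when $k_2=n_2$ and $n_1\le k_1$. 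This happens either for finite $k_1$ (from the $P_2$-part of $A_{\Lambda(k_1,k_2)}$) or for $k_1=\infty$ (since $P_2(\bm{n})\in\Lambda(\infty,n_2)$). It fails for $k_2=\infty$, because $A_{\Lambda(k_1,\infty)}$ and $E_1$ contain no $P_2$. This gives $\{\phi(k_1,n_2): k_1\ge n_1\ge n_2\}$.
\item For $i=3$: symmetric to $i=2$.
\item For $i=4$: $P_4(\bm{n})$ belongs only to $A_{\Lambda(\bm{n})}$, since the filters with an infinite coordinate contain no $E_4$-element and $A_{\Lambda(\bm{k})}\cap E_4=\{P_4(\bm{k})\}$ for finite $\bm{k}$.
\end{enumerate}

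\textbf{Main obstacle.} The only delicate point is the bookkeeping of boundary cases involving $\infty$ in (ii) and (iii). I would verify directly from the definition $A_\Lambda=\{f: fe=e \text{ for some } e\in\Lambda\}$ that $P_2(\bm{n})\in A_{\Lambda(\infty,k_2)}$ iff $n_2=k_2$. The reason is that $P_2(\bm{n})P_2(l_1,k_2)=P_2(l_1,k_2)$ for $l_1\ge n_1$ requires the second tensor factors ${S^*}^{n_2}pS^{n_2}$ and ${S^*}^{k_2}pS^{k_2}$ to coincide.
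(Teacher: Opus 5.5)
Your argument is correct and is essentially the paper's own. The paper states this lemma without proof, but its intended reasoning is that of the proof of Lemma~\ref{Size of X}: $B_i(r,\bm{n},\bm{m})B_i(r,\bm{n},\bm{m})^*=P_i(\bm{n})$ gives $D_{B_i(r,\bm{n},\bm{m})}=D_{P_i(\bm{n})}$, which is then read off from equation~(\ref{ALambdaK}). Your reading of $D_s$ inside $\widehat{E}_{\mathrm{tight}}$ is the right one, since in all of $\widehat{E}_0$ non-tight characters such as $1_{\{\bm{1}\}}$ also lie in $D_{P_1(0,0)}$.

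One small point: justify $ss^*=P_i(\bm{n})$ by the direct identities $S^m{S^*}^m=1$ and $p^2=p$, not by the product formula of Proposition~\ref{Semigroup U}. As printed, that formula double-counts when $k_1=k_2=0$ and returns $0$ when $k_1=k_2=1$, precisely in the boundary case $m_1=n_2$ that you are using.
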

	\begin{lmma}\label{FiniteFinite} The equivalence relation $\sim$ on $\Sigma$ is explicitly given below.
		
		\begin{enumerate}[(i)]
		\item Let $\bm{k}, \bm{m} \in \mathbb{N}^2$, $r \in \mathbb{Z}$. If $(\phi(\bm{k}), B_4(r, \bm{k}, \bm{m})) \in \Sigma$, then \\$[(\phi(\bm{k}), B_4(r, \bm{k}, \bm{m}))] = \bm{A}_1$, where
	    \begin{eqnarray*}
	    	&\bm{A}_{1} := & \{(\phi(\bm{k}), B_1(r',\bm{n'},\bm{m'})) \in \Sigma : r = r', \; n_1' = n_2' \leq \min\{k_1, k_2\}, \bm{m} = \bm{k}-\bm{n'}+\bm{m'}\}\\
	    	& & \cup \{(\phi(\bm{k}), B_2(r',\bm{n'},\bm{m'})) \in \Sigma : r = r', \; k_2 = n_2' \leq n_1' \leq k_1, \bm{m} = \bm{k}-\bm{n'}+\bm{m'}\}\\
	    	& & \cup \{(\phi(\bm{k}), B_3(r',\bm{n'},\bm{m'})) \in \Sigma : r = r', \; k_1 = n_1' \leq n_2' \leq k_2, \bm{m} = \bm{k}-\bm{n'}+\bm{m'}\}\\
	    	& & \cup \{(\phi(\bm{k}), B_4(r,\bm{k},\bm {m}))\}.
	    \end{eqnarray*} 
	    \item Let $\bm{k} = (k_1, n_2),\bm{m}\in\mathbb{N}^{2}$ and let $(\phi(k_1, \infty), B_3(r,\bm{k},\bm{m})) \in T$. Then $r = m_2 - n_2 \in {\mathbb Z}$. If $(\phi(k_1, \infty), B_3(r,\bm{k},\bm{m})) \in \Sigma$, then $k_1\leq n_2$ and $[(\phi(k_1, \infty), B_3(r,\bm{k},\bm{m}))]=\bm{A}_{2}$, where  \begin{eqnarray*}
	    	&\bm{A}_{2} := & \{(\phi(k_1, \infty), B_1(r',\bm n',\bm m')) \in \Sigma : r = r', \, n_1' = n_2' \leq k_1 , \, \bm{m} = \bm{k}-\bm{n'}+\bm{m'}\}\\
	    	&  & \cup \{(\phi(k_1, \infty), B_3(r',\bm{n'}, \bm{m'})) \in \Sigma : r = r', \, k_1 = n_1' \leq n_2', \, \bm{m} = \bm{k}-\bm{n'}+\bm{m'}\}.
	    \end{eqnarray*}
	    \item Let $\bm{k} = (n_1, k_2),\bm{m}\in\mathbb{N}^{2}$ and let $(\phi(\infty, k_2), B_2(r,\bm k,\bm m)) \in T$. Then $r = m_1 - n_1 \in {\mathbb Z}$. If $(\phi(\infty, k_2), B_2(r,\bm k,\bm m)) \in \Sigma$, then $k_2\leq n_1$  and \\$[(\phi(\infty, k_2), B_2(r,\bm{k},\bm{m}))]= [(\phi(\infty, k_2), B_2(r, (n_1, k_2),\bm{ m}))]=\bm{A}_{3}$, where  \begin{eqnarray*}
	    	&\bm{A}_{3} := & \{(\phi(\infty, k_2), B_1(r',\bm{n'},\bm {m'})) \in \Sigma : r = r', n_1' \leq n_2' \leq k_2 , \bm{m} = (k_1, l_2)-\bm{n'}+\bm{m'}\}\\
	    	&  & \cup \{(\phi(\infty, k_2), B_2(r',\bm{n'}, \bm{m'})) \in \Sigma : r = r', k_2 = n_2' \leq n_1', \bm{m} = (k_1, l_2)-\bm{n'}+\bm{m'}\}.
	    \end{eqnarray*}
	  \item Let $\bm{m},\bm{n}\in\mathbb{N}^{2}$. Let $(\phi(\infty, \infty), B_1(r,\bm{n},\bm{m})) \in T$. Then $r = m_1-n_1$, $n_1 = n_2, m_1 = m_2$. If $(\phi(\infty, \infty), B_1(r,\bm{n},\bm{m})) \in \Sigma$, then $[(\phi(\infty, \infty), B_1(r,\bm{n},\bm{m}))] =\bm{A}_{4}$, where   $$\bm{A}_{4}:= \{(\phi(\infty, \infty), B_1(r',\bm{n'},\bm{m'})) \in\Sigma : r = r'\}.$$
	    \end{enumerate}
	\end{lmma}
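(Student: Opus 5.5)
The plan is to compute each equivalence class directly from the definition: $(x,s)\sim(x,t)$ iff there is $e\in E$ with $x(e)=1$ and $es=et$. Fix $x=\phi(\bm{k})$. By Proposition~\ref{Image} and \eqref{ALambdaK}, the projections $e$ with $x(e)=1$ are exactly the elements of $A_{\Lambda(\bm{k})}$. So the class of $(x,s)$ is the set of all $(x,t)\in\Sigma$ for which $es=et$ for some $e\in A_{\Lambda(\bm{k})}$. Since $A_{\Lambda(\bm{k})}$ is a filter, it is closed under products. Moreover, $e\le f$ and $fs=ft$ imply $es=et$. Hence it suffices to let $e$ run over a cofinal (downward) family in $A_{\Lambda(\bm{k})}$.

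For case (i), with $\bm{k}\in\mathbb{N}^2$, Remark~\ref{Minimum Projections} gives that $P_4(\bm{k})$ is the minimum of $A_{\Lambda(\bm{k})}$. The relation therefore reduces to the single test $P_4(\bm{k})s=P_4(\bm{k})t$. I will compute $P_4(\bm{k})B_i(r',\bm{n'},\bm{m'})$ with the product formula from the proof of Proposition~\ref{Semigroup U}, applied tensor factor by tensor factor. Writing $P_4(\bm{k})=S^{*k_1}pS^{k_1}\otimes S^{*k_2}pS^{k_2}\otimes 1$, each factor has the form
\[
S^{*k}pS^{k}\,S^{*n}p^{\epsilon}S^{m}=S^{*k}pS^{k-n+m}
\]
when $k\ge n$ and the compatibility condition (needed when $\epsilon=1$) holds. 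Otherwise the factor is $0$. The product is nonzero exactly when $(x,B_i)\in\Sigma$, i.e.\ under the conditions of Lemma~\ref{Size of X}. In that case the product equals $B_4(r',\bm{k},\bm{k}-\bm{n'}+\bm{m'})$. Comparing this with $P_4(\bm{k})B_4(r,\bm{k},\bm{m})=B_4(r,\bm{k},\bm{m})$ gives $r=r'$ and $\bm{m}=\bm{k}-\bm{n'}+\bm{m'}$, which is exactly $\bm{A}_1$.

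For cases (ii)--(iv) there is no minimum element, so I will use the cofinal chains $P_3((k_1,l_2))$ with $l_2\to\infty$, $P_2((l_1,k_2))$ with $l_1\to\infty$, and $P_1((l,l))$ with $l\to\infty$, respectively. In case (ii), take $s=B_3(r,\bm{k},\bm{m})$, where $\bm{k}=(k_1,n_2)$. For $l_2$ large, $P_3((k_1,l_2))\,s$ is $S^{*k_1}pS^{k_1}\cdot S^{*k_1}pS^{m_1}$ in the first factor, and in the second factor a truncated $S^{*l_2}S^{l_2-n_2+m_2}$. Performing the same computation for $t=B_1$ or $t=B_3$, and recalling that $S^{*a}S^{b}$ determines $(a,b)$, equality for some large $l_2$ holds exactly when $r=r'$ and $\bm{m}=\bm{k}-\bm{n'}+\bm{m'}$. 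Elements of type $B_2$ or $B_4$ cannot lie in $\Sigma$ over $\phi(k_1,\infty)$ by Lemma~\ref{Size of X}. Case (iii) is symmetric. In case (iv), over $\phi(\infty,\infty)$ only type $B_1$ is allowed. Multiplying $B_1(r',\bm{n'},\bm{m'})$ by $P_1((l,l))$ with $l\ge n_1',n_1$ gives $S^{*l}S^{l-n'_1+m'_1}\otimes S^{*l}S^{l-n'_1+m'_1}\otimes\bm{t}^{r'}$. This depends only on $m'_1-n'_1=r'$, so the class consists of all $B_1$ with the same $r$, which is $\bm{A}_4$.

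The main obstacle is the bookkeeping in cases (ii) and (iii). There the first (respectively second) tensor factor carries a fixed $p$ at level $k_1$, while the other factor is only stabilized asymptotically. One must check carefully that the product formula's indicator conditions, together with the constraints $n_1'\le n_2'$ (or $n_1'\ge n_2'$) coming from $\Sigma$, yield precisely the identity $\bm{m}=\bm{k}-\bm{n'}+\bm{m'}$ in both coordinates and nothing weaker. I would handle each tensor factor separately and use the cofinality reduction to choose $l$ larger than all indices involved.
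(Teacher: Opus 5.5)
Your proposal is correct and follows essentially the same route as the paper: you identify the projections with $x(e)=1$ as the elements of $A_{\Lambda(\bm{k})}$, use the minimum $P_4(\bm{k})$ in case (i) and sufficiently small projections $P_3((k_1,l_2))$, $P_2((l_1,k_2))$, $P_1((l,l))$ in the remaining cases, and then compare the resulting products, such as $B_4(r',\bm{k},\bm{k}-\bm{n'}+\bm{m'})$. Your explicit cofinality reduction, taking $l_2$ larger than all the indices involved, is slightly more careful than the paper's write-up: the paper's specific choices $P_3(k_1,n_2')$ and $P_3(k_1,n_2)$ in case (ii) need not lie in $E$ (the first one), or need not be small enough to force equality (the second one).
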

	
	\begin{proof}
		\begin{enumerate}[(i)]
		\item Note that $\phi(\bm{k})(P_4(\bm{k})) = 1$ and $P_4(\bm{k}) B_4(r, \bm{k}, \bm{m}) = B_4(r, \bm{k}, \bm{m})$. Observe that, for any $(\phi(\bm{k}), B_i(r',\bm{n'},\bm{m'})) \in \bm{A}_{1},$  we have, $$P_4(k)B_i(r',\bm{n'},\bm{m'}) =B_4(r',\bm{k},\bm{k}-\bm{n'}+\bm{m'}) = B_4(r,\bm{k},\bm{m}) = P_4(\bm{k})B_4(r, \bm{k}, \bm{m}),$$ thus establishing $\bm{A}_1 \subseteq [(\phi(\bm{k}), B_4(r, \bm{k}, \bm{m}))]$. Conversely, if $(\phi(\bm{k}), B_i(r', \bm{n}', \bm{m}'))\sim$\\$(\phi(\bm{k}), B_4(r, \bm{k}, \bm{m}))$, 
		then $\bm{l} = \bm{k}$ and there exists $P_j(\bm{h}) \in E$ such that
		$\phi(\bm{k})(P_j(\bm{h})) = 1$ and $ P_j(\bm{h})B_i(r',\bm n',\bm m') = P_j(\bm{h})B_4(r,\bm{k},\bm{m}).$ 
		Since $\phi(\bm{k})(P_j(\bm{h})) = 1$, we must have  $P_j(\bm{h}) \in A_{\Lambda(\bm{k})}$. Since $P_4(\bm{k})$ is the minimum element in $A_{\Lambda(\bm{k})}$, $P_j(\bm{h})P_4(\bm{k}) = P_4(\bm{k})$. Thus, replacing $P_j(\bm{h})$ by a larger projection $P_4(\bm{k})$ in the above identity,
		\begin{eqnarray}\label{B}
			\phi(\bm{k})(P_4(\bm{k})) = 1 \quad \textrm{ and } \quad P_4(\bm{k})B_i(r',\bm n',\bm m') = P_4(\bm{k})B_4(r,\bm{k},\bm{m}) = B_4(r,\bm{k},\bm{m}).
		\end{eqnarray}
		Observe that $P_4(\bm{k})B_i(r',\bm{n'},\bm{m'}) \neq 0$ since $B_4(r,\bm{k},\bm{m}) \neq 0$. But
		\begin{eqnarray*}
			P_4(\bm{k})B_i(r',\bm{n'},\bm{m'}) \neq 0 \Longleftrightarrow \left\{
			\begin{array}{ll}
				n_1' = n_2' \leq \min\{k_1, k_2\},\,\,&\mbox{if}\, i=1, \\
				k_2 = n_2' \leq n_1' \leq k_1,\,\,&\mbox{if}\, i=2, \\
				k_1 = n_1', \leq n_2' \leq k_2,\,\,&\mbox{if}\,i=3 \\
				k = n',\,\,&\mbox{if}\,i=4.
			\end{array}
			\right.
		\end{eqnarray*}
		In all the above cases,
		\begin{eqnarray}\label{C}
			P_4(\bm{k})B_i(r',\bm{n'},\bm{m'}) = B_4(r', \bm{k}, \bm{k}-\bm{n'}+\bm{m'}).
		\end{eqnarray}
		Equations \eqref{B} and \eqref{C} imply $r' = r$ and $\bm{m} = \bm{k} - \bm{n}' + \bm{m}'$, establishing the reverse containment and the claim.
		\item The elements $(\phi(k_1, \infty), B_1(r',\bm{n}',\bm{m}'))$ and $(\phi(k_1, \infty), B_3(r',\bm{n}',\bm{m}'))\in \bm{A}_2$, are equivalent to $(\phi(k_1,\infty),B_3(r,\bm{k},\bm{m}))$ via the projections $P_3(k_1, n_2')$ and $P_3(k_1, n_2)$, respectively. Conversely, let $(\phi(l), B_i(r',\bm{n'}, \bm{m'})) \sim (\phi(k_1, \infty), B_3(r,\bm{k}, \bm{m}))$ via some $P_j(\bm{h}) \in A_{\Lambda(k_1, \infty)}$. Hence, $l = (k_1, \infty)$. By equation~\eqref{ALambdaK}, $i,j \in \{1, 3\}$. As before, assume the larger projection, i.e., $j=3$ and  $h_2 \geq \max\{n_2, n_2'\}$. For $i\in\{1,3\}$, we have
			$$
				P_3(\bm{h})B_i(r',\bm{n'},\bm{m'}) = B_3(r', \bm{h}, \bm{h}-\bm{n'}+\bm{m'}).
			$$
			Comparing both sides, $r' = r$, $\bm{h} = \bm{k}$, and $\bm{h}-\bm{k}+\bm{m} = \bm{h}-\bm{n'}+\bm{m'}$ which implies $\bm{m} = \bm{k}-\bm{n'}+\bm{m'}$.
		\item 		The proof follows analogously to the earlier case.
		\item If $(\phi(\infty, \infty), B_1(r,\bm{n'},\bm{m'})) \in \bm{A}_{4}$, then $m_1' - n_1' = r' = r = m_1 - n_1$ and thus, $\bm{m'}-\bm{n'} = \bm{m}-\bm{n}$. Letting $\bm{l} = (l_1, l_2)$, $l_1 = l_2 \geq \max\{n_1, m_1, n_1', m_1'\}$, we obtain $\phi(\infty, \infty)(P_1(\bm{l})) = 1$ and
		\begin{eqnarray*}
			P_1(\bm{l})B_1(r,\bm{n'},\bm{m'})  =  B_1(r, \bm{l}, \bm{l}-\bm{n'}+\bm{m'}) = B_1(r,\bm{l}, \bm{l}-\bm{n}+\bm{m})  =  P_1(\bm{l})B_1(r,\bm{n},\bm{m}),
		\end{eqnarray*}
		since $\bm{m'}-\bm{n'} =\bm{m}-\bm{n}$. Hence, $\{(\phi(\infty, \infty), B_1(r',\bm{n'},\bm {m'})) \in\Sigma : r = r'\} \subseteq [\phi(\infty, \infty), B_1(r,\bm{n},\bm{m})]$.
		
		\noindent  Conversely, if $(\phi(k), B_i(r',\bm{n'}, \bm{m'})) \sim (\phi(\infty, \infty), B_1(r,\bm{n}, \bm{m}))$ via some $P_j(\bm{l})$, then $i=j=1$. Since $P_1(\bm{l})P_1(\bm{l'}) = P_1(\max\{\bm{l}, \bm{l'}\})$ and $A_{\Lambda(\infty, \infty)} = \Lambda(\infty, \infty)$ has no minimum element, we may enlarge $\bm{l}$, i.e. assume $l_1 = l_2 \geq \max\{n_1, m_1, n_1', m_1'\}$. Then,
		$$ B_1(r', \bm{l}, \bm{l}-\bm{n'}+\bm{m'}) = P_1(\bm{l})B_1(r',\bm{n'},\bm{m'}) = P_1(\bm{l})B_1(r,\bm{n},\bm{m}) = B_1(r,\bm{l}, \bm{l}-\bm{n}+\bm{m}).$$
		Hence we must have $r = r'$ which gives the reverse inclusion, completing the proof.
		\end{enumerate}
		\end{proof}
	\section{Properties of $\mathcal{G}_{\mathrm{tight}}$}\label{sect_gpoid_prop}
	In this section, we investigate the properties of the groupoid $\mathcal{G}_{\mathrm{tight}}$ obtained in the previous section. We compute the isotropy group of $\mathcal{G}_{\mathrm{tight}}$ and prove that $\mathcal{G}_{\mathrm{tight}}$ is Hausdorff.  Finally, we discuss that $\mathcal{G}_{\mathrm{tight}}$ has a left Haar system consisting of counting measures.

	\begin{lmma}\label{Unitspace}
		The unit space $\mathcal{G}_{\mathrm{tight}}^{0}$ of $\mathcal{G}_{\mathrm{tight}}$ is
		$$\mathcal{G}_{\mathrm{tight}}^{0} \cong \{\phi(k_{1},k_{2}), \, \phi(k_{1},\infty), \, \phi(\infty,k_{2}), \, \phi(\infty,\infty) : k_1, k_2 \in {\mathbb N}\} \cong \widehat{E}_{\infty}.  $$
	\end{lmma}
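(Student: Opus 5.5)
The plan is to unwind the definitions. By construction, $\mathcal{G}_{\mathrm{tight}}$ is the reduction of $\mathcal{G}'=\Sigma/\sim$ to $\widehat{E}_{\mathrm{tight}}$. By Proposition~\ref{gpoid basis}, the unit space of $\mathcal{G}'$ is identified with $\widehat{E}_0$ via $[(x,\bm{1})]\mapsto x$. For a reduction $\mathcal{G}'_{X}$ with $X\subseteq \mathcal{G}'^{0}$, the unit space is exactly $X$. Hence
$$\mathcal{G}_{\mathrm{tight}}^{0}=\{[(x,\bm{1})]: x\in \widehat{E}_{\mathrm{tight}}\}\cong \widehat{E}_{\mathrm{tight}}.$$

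To check this directly, I will compute $r([(x,s)])=[(x,s)][(x,s)]^{-1}=[(x,s)][(x\cdot s,s^*)]=[(x,ss^*)]$. Since $x(ss^*)=1$, taking $e=ss^*$ in the definition of $\sim$ gives $e\,ss^*=e\,\bm{1}$, so $[(x,ss^*)]=[(x,\bm{1})]$. Thus every unit has the form $[(x,\bm{1})]$ with $x\in\widehat{E}_{\mathrm{tight}}$. Conversely, each such $[(x,\bm{1})]$ lies in $\mathcal{G}_{\mathrm{tight}}$, since its range and source both equal $x$, and it is its own range. The map $x\mapsto[(x,\bm{1})]$ is injective because $\sim$ only relates pairs with the same first coordinate. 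It is a homeomorphism onto its image, because the basic open sets $\Theta(U,\bm{1})$ correspond exactly to the open sets $U\subseteq\widehat{E}_0$.

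Next I will invoke the proposition $\widehat{E}_{\mathrm{tight}}=\widehat{E}_\infty$. Then Proposition~\ref{Image} and Proposition~\ref{Homeomorphism} give $\widehat{E}_\infty=\phi(\overline{\mathbb{N}}^2)$, with $\phi$ a homeomorphism. Splitting $\overline{\mathbb{N}}^2$ into the four pieces $\mathbb{N}^2$, $\mathbb{N}\times\{\infty\}$, $\{\infty\}\times\mathbb{N}$ and $\{(\infty,\infty)\}$ yields the displayed description $\{\phi(k_1,k_2),\phi(k_1,\infty),\phi(\infty,k_2),\phi(\infty,\infty)\}$.

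The only point needing care, and the main potential obstacle, is the presence of $\bm{1}$ in $T$. The remark preceding Definition~\ref{Filter} assumes $T$ contains $\bm{1}$, but $T$ as defined consists of the $B_i$ and $0$. I would handle this by adjoining the identity operator to $T$, so that $\bm{1}$ is a projection lying in every filter, and noting that this changes none of the earlier computations. Alternatively, I could avoid $\bm{1}$ altogether by working with $[(x,ss^*)]$ and using $[(x,e)]=[(x,f)]$ for any projections $e,f$ with $x(e)=x(f)=1$ (take the witness $ef$ in the definition of $\sim$). Either route identifies each unit with its character $x$.
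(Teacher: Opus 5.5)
Your argument is correct and is essentially the paper's: compute $r([(x,s)])=[(x,s)][(x\cdot s,s^*)]=[(x,ss^*)]$, identify this class with $[(x,\bm{1})]\simeq x$, and then use $\widehat{E}_{\mathrm{tight}}=\widehat{E}_\infty=\phi(\overline{\mathbb{N}}^2)$ to obtain the four-piece description. Your worry about $\bm{1}$ is unnecessary, because $P_1(\bm{0})=B_1(0,\bm{0},\bm{0})$ is the identity operator and already lies in $T$; your alternative argument using the witness $ef$ would also work.
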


	\begin{proof}
		Let $\bm{k} = (k_1, k_2) \in \overline{\mathbb N}^2$. Then, by Lemma \ref{FiniteFinite}, as well as the fact that\\ $B_{i}(r,\bm{n},\bm{m})B_{i}(r,\bm{n},\bm{m})^* =P_{i}(\bm{n})$ we have,   
		$
			[(\phi(\bm{k}),B_{i}(r,\bm{n},\bm{m}))][(\phi(\bm{k}),B_{i}(r,\bm{n}, \bm{m}))]^{-1}=[(\phi(\bm{k}),P_{i}(0))] \simeq \phi(\bm{k})$.
			This completes the proof.
	\end{proof}
	\begin{lmma} \label{fibres 1}Let $\bm{k}=(k_1,k_2)\in\bbn^2$.
		\begin{enumerate}[(a)]\item The fibres $\clg_{\mathrm{tight}}^u$, for $u\in\clg_{\mathrm{tight}}^0$, are given by the following.
			\begin{enumerate}[(i)]
				\item $\begin{aligned}[t]\clg_{\mathrm{tight}}^{\phi(\infty,\infty)}&=\left\{[\phi(\infty,\infty),B_{1}(r,\bm{0},(r,r))]:r\in\bbz_+\right\}\\&\phantom{{}={}}\cup \{[\phi(\infty,\infty),B_{1}(r,(-r,-r),\bm{0})]:r\in\bbz_-\}\\&\cong\bbz.\end{aligned}$
				\item $\begin{aligned}[t]\clg_{\mathrm{tight}}^{\phi(\infty,k_2)}&=\{[\phi(\infty,k_{2}),B_{2}(r,(k_{2}+m,k_{2}),(r+k_{2}+m,m))]:m\in\bbn,r\in\bbz_+\}\\ &\phantom{{}={}}\cup\{[\phi(\infty,k_{2}),B_{2}(r,(k_{2}+m-r,k_{2}),(k_{2}+m,m))]:m\in\bbn,r\in\bbz_-\}\\&\cong\bbn\times\bbz.\end{aligned}$
				\item $\begin{aligned}[t]\clg_{\mathrm{tight}}^{\phi(k_1,\infty)}&=\{[\phi(k_{1},\infty),B_{3}(r,(k_{1},k_{1}+m),(m,r+k_{1}+m))]:m\in\bbn,r\in\bbz_+\}\\&\phantom{{}={}}\cup\{[\phi(k_{1},\infty),B_{3}(r,(k_{1},k_{1}+m-r),(m,k_{1}+m))]:m\in\bbn,r\in\bbz_-\}\\&\cong\bbn\times\bbz.\end{aligned}$
				\item $\begin{aligned}[t]\mathcal{G}_{\mathrm{tight}}^{\phi(\bm{k})}&=\{[(\phi(\bm{k}),B_{4}(r,(k_{1},n),(k_{2},m)))]:m=n,m\in\mathbb{N},r\in\mathbb{Z}\}\\&\cong\bbn\times\bbn\times\bbz\end{aligned}$
			\end{enumerate}
			\item The fibres $\clg_{\mathrm{tight},u}$, for $u\in\clg_{\mathrm{tight}}^0$, are given by the following.
			\begin{enumerate}[(i)]\item $\begin{aligned}[t]\clg_{\mathrm{tight},{\phi(\infty,\infty)}}&=\left\{\left[\phi(\infty,\infty),B_1(r, \bm{0}, (r,r)) \right] :r\in\bbz_+\right\}\\&\phantom{{}={}}\cup\left\{\left[\phi(\infty,\infty),B_1(r, (-r,-r),\bm{0}) \right] :r\in\bbz_-\right\}\\&\cong\bbz\end{aligned}.$
				\item $\begin{aligned}[t] \clg_{\mathrm{tight},\phi(\infty,k_2)}&=\left\{\left[\phi(\infty,n),B_2(r, (n+k_{2},n), (r+n+k_{2},k_{2}))\right] :n\in\bbn,r\in\bbz_+\right\}\\
					&\phantom{{}={}}\cup\{[\phi(\infty,n),B_2(r,(n+k_{2}-r,n)),(n+k_{2},k_{2})] :n\in\bbn,r\in\bbz_-\}\\&\cong\bbn\times\bbz.\end{aligned}$
				\item$\begin{aligned}[t] \clg_{\mathrm{tight},\phi(k_1,\infty)}&=\left\{\left[\phi(n,\infty),B_3(r, (n,n+k_{1}), (k_{1},r+n+k_{1}))\right] :n\in\bbn,r\in\bbz_+\right\}\\
				&\phantom{{}={}}\cup\{[\phi(n,\infty),B_3(r,(n,n+k_{1}-r)),(k_{1},n+k_{1})] :n\in\bbn,r\in\bbz_-\}\\&\cong\bbn\times\bbz.\end{aligned}$
				\item $\begin{aligned}[t]\mathcal{G}_{\mathrm{tight},\phi(\bm{k})}&=\{[(\phi(\bm n),B_{4}(r,\bm n,\bm{k}))]:\bm n\in\bbn\times\bbn,r\in\bbz\}\\&\cong\bbn\times\bbn\times\bbz\end{aligned}$
			\end{enumerate}
			\item The isotropy groups $\clg_{\mathrm{tight},u}^u$, for $u\in\clg_{\mathrm{tight}}^0$, are given by the following.
			\begin{enumerate}[(i)]
				\item $\begin{aligned}[t]\clg_{\mathrm{tight},\phi(\infty,\infty)}^{\phi(\infty,\infty)}&=\left\{[\phi(\infty,\infty),B_{1}(r,\bm{0},(r,r))]:r\in\bbz_+\right\}\\&\phantom{{}={}}\cup \{[\phi(\infty,\infty),B_{1}(r,(-r,-r),\bm{0})]:r\in\bbz_-\}\\&\cong\bbz.\end{aligned}$
				\item  $\begin{aligned}[t]\clg_{\mathrm{tight},\phi(\infty,k_2)}^{\phi(\infty,k_2)}&=\{[\phi(\infty,k_2),B_2(r, (2k_{2},k_{2}), (r+2k_{2},k_{2}))] :r\in \bbz_+\}\\&\phantom{{}={}}\cup\{[\phi(\infty,k_2),B_2(r,(2k_{2}-r,k_{2}),(2k_{2},k_{2}))] :r\in\bbz_-\}\\&\cong\bbz.\end{aligned}$
				\item $\begin{aligned}[t]\clg_{\mathrm{tight},\phi(k_1,\infty)}^{\phi(k_1,\infty)}&=\{[\phi(k_{1},\infty),B_3(r, (k_{1},2k_{1}), (k_{1},r+2k_{1}))] :r\in \bbz_+\}\\&\phantom{{}={}}\cup\{[\phi(k_{1},\infty),B_3(r,(k_{1},2k_{1}-r),(k_{1},2k_{1}))] :r\in\bbz_-\}\\&\cong\bbz.\end{aligned}$
				\item $\{[(\phi(k),B_{4}(r,\bm{k},\bm{k}))]:r\in\mathbb{Z}\}\cong\bbz$.
			\end{enumerate}
			\end{enumerate}
	\end{lmma}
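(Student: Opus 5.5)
The proof is a direct computation from Lemma \ref{Size of X}, Lemma \ref{Lemma 1} and Lemma \ref{FiniteFinite}. In $\mathcal{G}_{\mathrm{tight}}$ an arrow is a class $[(\phi(\bm{k}),s)]$ with $\phi(\bm{k})\in D_s$. Its range is $\phi(\bm{k})$ and its source is $\phi(\bm{k})\cdot s$. So the plan has three steps:
\begin{enumerate}[(1)]
\item Use Lemma \ref{FiniteFinite} to pick one canonical representative in each equivalence class $[(\phi(\bm{k}),s)]$ with $\phi(\bm{k})\in D_s$.
\item Compute the source $\phi(\bm{k})\cdot s$ of each representative explicitly as $\phi(\bm{k}')$ for some $\bm{k}'\in\overline{\mathbb{N}}^2$.
\item Read off the fibres $\mathcal{G}^u$ and $\mathcal{G}_u$, and intersect them to get the isotropy groups.
\end{enumerate}

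\textbf{Range fibres (a).} Fix $\bm{k}$ and split into the four types of units.
\begin{itemize}
\item For $\bm{k}\in\mathbb{N}^2$: by Lemma \ref{FiniteFinite}(i), every class with range $\phi(\bm{k})$ is represented uniquely by some $B_4(r,\bm{k},\bm{m})$. These are parametrized by $(\bm{m},r)\in\mathbb{N}^2\times\mathbb{Z}$.
\item For $(k_1,\infty)$: by Lemma \ref{FiniteFinite}(ii), a class is determined by $r$ together with the invariant $\bm{m}-\bm{n}$. Since $n_1=k_1$ is forced, the first coordinate of $\bm{m}$ is a free $m\in\mathbb{N}$. Because $r=m_2-n_2$, one normalizes $n_2$ minimally: $n_2=k_1+m$ if $r\ge 0$, and the shifted choice if $r<0$. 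This gives the listed representatives, parametrized by $\mathbb{N}\times\mathbb{Z}$.
\item For $(\infty,k_2)$: this is symmetric to the previous case.
\item For $(\infty,\infty)$: by Lemma \ref{FiniteFinite}(iv), only $r$ survives, and one takes the minimal representatives $B_1(r,\bm{0},(r,r))$ or $B_1(r,(-r,-r),\bm{0})$.
\end{itemize}

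\textbf{Source fibres (b).} Compute $\phi(\bm{k})\cdot B_i(r,\bm{n},\bm{m})$ by evaluating on projections:
\[
(\phi(\bm{k})\cdot s)(e)=\phi(\bm{k})(ses^*).
\]
Using the product formula from Proposition \ref{Semigroup U}, conjugating $P_j(\bm{h})$ by $B_i(r,\bm{n},\bm{m})$ shifts indices by $\bm{n}-\bm{m}$. Hence the source is $\phi(\bm{k}-\bm{n}+\bm{m})$, with the convention $\infty\pm c=\infty$. The identification is then justified by checking against the filters in \eqref{ALambdaK} and using that $\phi$ is a bijection (Proposition \ref{Image}).
\begin{itemize}
\item For $B_4$ this gives source $\phi(\bm{m})$.
\item For $B_3$ at $(k_1,\infty)$ it gives $\phi(m_1,\infty)$.
\item Similarly for $B_2$.
\item For $B_1$ at $(\infty,\infty)$ the source is again $\phi(\infty,\infty)$.
\end{itemize}
Inverting these relations (equivalently, applying (a) to inverse arrows $[(x\cdot s,s^*)]$ via Proposition \ref{gpoid basis}(ii)) produces the listed descriptions of $\mathcal{G}_{\mathrm{tight},u}$.

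\textbf{Isotropy (c).} Intersect the range and source conditions.
\begin{itemize}
\item For finite $\bm{k}$: this forces $\bm{m}=\bm{k}$, leaving $r\in\mathbb{Z}$.
\item For $(k_1,\infty)$: this forces $m_1=k_1$, which gives the displayed elements indexed by $r$.
\item For $(\infty,k_2)$: symmetrically, $m_2=k_2$, again indexed by $r$.
\item For $(\infty,\infty)$: the isotropy group is the whole fibre.
\end{itemize}
To see that each isotropy group is $\mathbb{Z}$ as a group, not just as a set, show that $r$ is additive under composition. The composite is $[(x,st)]$, and the $\bm{t}$-component of $st$ is $\bm{t}^{r+r'}$. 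Moreover $r$ is a class invariant by Lemma \ref{FiniteFinite}. So the map $[(x,s)]\mapsto r$ is an injective homomorphism. It is surjective because the representatives are available for every $r\in\mathbb{Z}$.

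\textbf{Main obstacle.} The hard part is the bookkeeping in (b) for the boundary units $(k_1,\infty)$ and $(\infty,k_2)$. There the constraints $n_1\le n_2$, $m_1\le m_2$ and $r=m_2-n_2$, together with the normalization of representatives according to the sign of $r$, have to be reconciled carefully to get the exact index shifts in the statement. I would first verify the formula $\phi(\bm{k})\cdot s=\phi(\bm{k}-\bm{n}+\bm{m})$ on the generating projections $P_3(k_1,l_2)$, and only then specialize.
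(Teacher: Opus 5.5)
Your proposal is correct and follows the same route as the paper: the paper's proof only observes that $[(\phi(\bm{l}),s)]$ has range $\phi(\bm{l})$, reads off the classes from Lemma \ref{FiniteFinite}, and calls (b) and (c) similar, while your source formula $\phi(\bm{k})\cdot B_i(r,\bm{n},\bm{m})=\phi(\bm{k}-\bm{n}+\bm{m})$ (which the paper records only later, in Lemma \ref{rmrkunitspace}) and your check that $r$ is additive under composition just supply details the paper leaves implicit. Your parametrization of (a)(iv) by $(\bm{m},r)\in\mathbb{N}^2\times\mathbb{Z}$, with representatives $B_4(r,\bm{k},\bm{m})$, is the correct reading of that item, whose displayed index set is garbled.
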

	\begin{proof}
		\begin{enumerate}[(a)]\item
		Let $\bm{k} = (k_{1},k_{2}) \in \overline{\mathbb{N}}^2$ and $u = \phi(\bm{k}) \in \mathcal{G}_{\mathrm{tight}}^{0}$. Then, as in the proof of Lemma~\ref{Unitspace},
		\begin{eqnarray*}
			[(\phi(\bm{l}),B_{i}(r,\bm{n},\bm{m}))]\in \mathcal{G}_{\mathrm{tight}}^u& \Longleftrightarrow & r([(\phi(\bm{l}),B_{i}(r,\bm{n},\bm{m}))]) = u\\
			& \Longleftrightarrow & [(\phi(\bm{l}),B_{i}(r,\bm{n},\bm{m}))][(\phi(\bm{l}),B_{i}(r,\bm{n},\bm{m}))]^{-1} =\phi(\bm{k})\\
			& \Longleftrightarrow & [(\phi(\bm{l}),B_{i}(r,\bm{n},\bm{m}))][(\phi(\bm{l}),B_{i}(r,\bm{n},\bm{m}))]^{-1} = [\phi(\bm{k}), P_1(0)]\\
			& \Longleftrightarrow & [(\phi(\bm{l}),P_{i}(\bm{n}))] = [\phi(\bm{k}), P_i(\bm{n})]\\
			& \Longleftrightarrow & \bm{l}=\bm{k}.
		\end{eqnarray*}
		The claim now follows from Lemma \ref{FiniteFinite}. The proofs for part (b) and (c) are similar.
	\end{enumerate}
		\end{proof}
	\begin{lmma}\label{rmrkunitspace}
The action of $\clg_{\mathrm{tight}}$ on $\clg_{\mathrm{tight}}^0$ is described below.
		\begin{enumerate}[(i)]\label{relations}
			\item For $(\phi(\bm k), B_{4}(r,\bm k,\bm n))\in \bm{A_1}$, $\phi(\bm k)\cdot B_{4}(r,\bm k,\bm n)=\phi(\bm n)$.
			\item For $(\phi(k_{1},\infty), B_{3}(r,(k_{1},n_{2}),\bm m))\in\bm{A_2}$, $\phi(k_{1},\infty)\cdot B_{3}(r,(k_{1},n_{2}),\bm m)=\phi(m_1,\infty)$.
			\item For $(\phi(\infty,k_{2}), B_{2}(r,(n_{1},k_{2}),\bm m))\in\bm{A_3}$, $\phi(\infty,k_{2})\cdot B_{2}(r,(n_{1},k_{2}),\bm m)=\phi(\infty,m_2)$.
			\item For $(\phi(\infty,\infty), B_{1}(r,\bm{n},\bm{m}))\in \bm{A_4}$, $\phi(\infty,\infty)\cdot B_{1}(r,\bm{n},\bm{m})=\phi(\infty,\infty)$.
		\end{enumerate}
	\end{lmma}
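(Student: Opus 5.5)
The plan is to compute $x\cdot s$ directly from its definition $(x\cdot s)(e)=x(ses^{*})$, here applied with $s=B_i(\cdot)^{*}$, the convention under which the groupoid inverse $[(x,s)]^{-1}=[(x\cdot s,s^{*})]$ has source $x$. Then we identify the resulting character with some $\phi(\bm{l})$ via Proposition~\ref{Image}. Since $x\cdot s$ is again a character lying in $\widehat{E}_{\mathrm{tight}}=\widehat{E}_\infty$, Proposition~\ref{Homeomorphism} says it equals $\phi(\bm{l})$ for a unique $\bm{l}\in\overline{\bbn}^2$. So it suffices to exhibit one projection, or a cofinal family of projections, that $x\cdot s$ evaluates to $1$ on, and which pins down $\bm{l}$.

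The key algebraic fact is that for $B=B_i(r,\bm{n},\bm{m})$ we have $B^{*}B=P_i(\bm{m})$ and $BB^{*}=P_i(\bm{n})$. More generally, for $e\in E$ with $e\le P_i(\bm{n})$,
\[
B^{*}eB=P_j(\bm{m}-\bm{n}+\bm{h})
\]
whenever $e=P_j(\bm{h})$. This follows from the product formula in the proof of Proposition~\ref{Semigroup U}: conjugation by $S^{*n}p^kS^{m}$ shifts indices by $\bm{m}-\bm{n}$. We apply this case by case.

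\textbf{Case (i).} Here $x=\phi(\bm{k})$ with $\bm{k}=\bm{n}$, and $A_{\Lambda(\bm{k})}$ has minimum $P_4(\bm{k})=BB^{*}$. Hence $(x\cdot B^{*})(B^{*}B)=x(BB^{*}B B^{*})=x(P_4(\bm{k}))=1$. So $x\cdot B^{*}$ takes the value $1$ at $P_4(\bm{n})$ (in the statement's notation the second index is $\bm{n}$). By Remark~\ref{Minimum Projections} and equation~(\ref{ALambdaK}), the only ultrafilter containing $P_4(\bm{n})$ is $A_{\Lambda(\bm{n})}$, which gives $\phi(\bm{n})$.

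\textbf{Case (ii).} Here $x=\phi(k_1,\infty)$, and $A_{\Lambda(k_1,\infty)}$ contains the cofinal family $P_3(k_1,l_2)$ for $l_2\ge n_2$. For each such $l_2$, conjugation gives $B^{*}P_3(k_1,l_2)B=P_3(m_1,l_2-n_2+m_2)$, so $x\cdot B^{*}$ equals $1$ on $P_3(m_1,t)$ for all sufficiently large $t$. Among ultrafilters, only $A_{\Lambda(m_1,\infty)}$ contains $P_3(m_1,t)$ for arbitrarily large $t$: for finite $k_2$, $A_{\Lambda(m_1,k_2)}$ only contains $P_3(m_1,t)$ with $t\le k_2$, and the others contain no $P_3$ with first index $m_1$. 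Hence $x\cdot B^{*}=\phi(m_1,\infty)$.

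\textbf{Cases (iii) and (iv).} Case (iii) is symmetric to case (ii), with the roles of the coordinates swapped and $P_2$ in place of $P_3$. For case (iv), $B_1$ conjugates $P_1(l,l)$ to $P_1(l',l')$ with $l'=l-n_1+m_1\to\infty$, so the image contains all of $E_1$ and equals $A_{\Lambda(\infty,\infty)}$ by maximality.

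I expect the main obstacle to be bookkeeping rather than ideas. First, the direction convention ($x\cdot s$ versus $x\cdot s^{*}$) must be checked against the stated formulas. Second, in cases (ii) and (iii), uniqueness of the ultrafilter must be verified via (\ref{ALambdaK}) using a cofinal family instead of a minimum element. A reliable alternative is to use continuity: approximate $\phi(k_1,\infty)$ by $\phi(k_1,l)$ with $l\to\infty$, apply case (i), and pass to the limit with Proposition~\ref{Homeomorphism}.
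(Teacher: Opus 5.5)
Your argument is correct in substance, but it follows a different route from the paper's. The paper's proof is one line: the relations ``can be verified from Lemma~\ref{FiniteFinite}''. That proof leans on the fact that $x\cdot s$ depends only on the class $[(x,s)]$, and on the explicit description of $\bm{A}_1,\dots,\bm{A}_4$, which lets one pass to convenient representatives. You instead compute the character directly from $(x\cdot s)(e)=x(ses^{*})$: you push $A_x$ forward along $e\mapsto B^{*}eB$ and identify the image using the list (\ref{ALambdaK}) of ultrafilters. This is self-contained and actually carries out the verification the paper leaves to the reader. The paper's route is shorter once the classes are known, and it is what your continuity alternative secretly needs. Case (i) only covers $B_4$ arrows, so at $\phi(k_1,l)$ you would first replace $B_3(r,(k_1,n_2),\bm m)$ by its $B_4$-representative from Lemma~\ref{FiniteFinite}(i).

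One correction: the statement's $\phi(\bm k)\cdot B$ is the paper's $x\cdot s$ with $s=B$ itself, not $s=B^{*}$.
\begin{itemize}
\item $x\in D_B$ because $x(BB^{*})=1$.
\item $x\in D_{B^{*}}$ would require $x(B^{*}B)=1$, which fails in general; in case (i), $D_{B_4(r,\bm k,\bm n)^{*}}=\{\phi(\bm n)\}$.
\item With $s=B^{*}$ one gets $(x\cdot B^{*})(B^{*}B)=x(B^{*}B^{*}BB)$, not $x(BB^{*}BB^{*})$.
\end{itemize}
The formulas you actually use are $x(BB^{*}BB^{*})=1$ and $(x\cdot B)(B^{*}eB)=x(BB^{*}\,e\,BB^{*})=x(e)$ for $e\in A_x$. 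These are computations of $x\cdot B$, so the argument stands once you relabel.

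You also do not need the a priori invariance of $\widehat{E}_\infty$. In each case the projections you exhibit upward-generate the ultrafilter $A_{\Lambda(\bm l)}$, with $\bm l=\bm n$, $(m_1,\infty)$, $(\infty,m_2)$, $(\infty,\infty)$ respectively. Hence $A_{x\cdot B}\supseteq A_{\Lambda(\bm l)}$, and maximality gives equality.
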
	
	\begin{proof}
The relations can be verified from Lemma \ref{FiniteFinite}.
	\end{proof}
	\begin{ppsn}
		The groupoid $\mathcal{G}_{\mathrm{tight}}$ is Hausdorff.
	\end{ppsn}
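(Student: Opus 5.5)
To prove that $\mathcal{G}_{\mathrm{tight}}$ is Hausdorff, we will use the standard criterion for groupoids of germs built from an inverse semigroup: such a groupoid is Hausdorff once, for each $s\in T$ and each pair of distinct points $[(x,s)]\neq[(y,t)]$, we can find disjoint basic open sets $\Theta(U,s)$ and $\Theta(V,t)$ containing them (Proposition~\ref{gpoid basis}). Since all $D_s$ are compact open and $\widehat{E}_{\mathrm{tight}}\cong\overline{\mathbb N}^2$ is Hausdorff (Proposition~\ref{Homeomorphism}), the proof splits into two cases according to whether the base points differ.

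\textbf{Case 1: $x\neq y$.} Take disjoint open sets $U\ni x$ and $V\ni y$ in $\widehat{E}_{\mathrm{tight}}$, and intersect them with $D_s$ and $D_t$. The sets $\Theta(U\cap D_s,s)$ and $\Theta(V\cap D_t,t)$ are disjoint, because the source map $[(z,s)]\mapsto z$ (or equivalently the range map, depending on convention) is well defined on germs and separates them.

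\textbf{Case 2: $x=y=\phi(\bm k)$ but the germs differ.} Here we use the explicit germ classes of Lemma~\ref{FiniteFinite}. Each class is determined by a small set of invariants:
\begin{itemize}
\item[(i)] for $\bm k\in\mathbb N^2$, the pair $(r,\bm k-\bm n+\bm m)$;
\item[(ii)] for $\bm k=(k_1,\infty)$, the integer $r$ together with $m_1-n_1+k_1$ (the first coordinate of the target, cf.\ Lemma~\ref{rmrkunitspace});
\item[(iii)] for $\bm k=(\infty,k_2)$, symmetrically;
\item[(iv)] for $(\infty,\infty)$, the integer $r$ alone.
\end{itemize}
The plan is to show these invariants are locally constant on the basic sets $\Theta(D_s,s)$. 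Explicitly, for $s=B_i(r,\bm n,\bm m)$ and $z=\phi(\bm l)\in D_s$, the germ $[(z,s)]$ carries the label $r$, with target $z\cdot s$ given by shifting $\bm l$ by $\bm m-\bm n$ in the appropriate coordinates.

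Given two distinct germs at the same point, one of two things happens:
\begin{itemize}
\item Their $r$ values differ. Then $\Theta_s\cap\Theta_t=\emptyset$ outright, since the $r$-label (the power of $\bm t$) is preserved under the relation $\sim$: multiplying by a projection never changes the $\bm t^r$ factor.
\item Their $r$ values agree but their displacements $\bm m-\bm n$ differ. Then they have different targets at points near $x$. Using continuity of $z\mapsto z\cdot s$ and $z\mapsto z\cdot t$ together with Case 1 applied to the targets, we shrink $U$ so that the targets stay separated.
\end{itemize}

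The main obstacle is the boundary points $(k_1,\infty)$, $(\infty,k_2)$ and $(\infty,\infty)$, where two elements with the same $r$ but different displacements might seem to give germs that cannot be separated near $x$. The key observation is that if $r$ agrees and the displacement vectors agree, then for a large projection $e=P_j(\bm h)$ in the filter (as in the proofs of Lemma~\ref{FiniteFinite}(ii) and~(iv)) we get $es=et$ on a whole neighbourhood. This equality holds uniformly on $D_e\cap D_s\cap D_t$, so the germs coincide on an open set. Conversely, if the displacements differ, then for $\phi(\infty,\infty)$ the germ classes in $\bm A_4$ show that only $r$ matters, which forces the displacement $m_1-n_1=r$ to be equal anyway. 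Hence the only remaining cases are the finite-coordinate ones, which are handled by separation of the targets.

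Put differently, the crux is to verify that the set $\{z\in D_s\cap D_t: [(z,s)]=[(z,t)]\}$ is closed in $D_s\cap D_t$; this is equivalent to Hausdorffness for étale germ groupoids. We will check this directly by computing it to be $D_s\cap D_t$ or $\emptyset$ (up to finitely many isolated points in $\mathbb N^2$), using Lemmas~\ref{Lemma 1} and~\ref{FiniteFinite}.
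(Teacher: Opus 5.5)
Your proof is correct. Case 1 is the paper's case (i), but Case 2 argues differently.

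\textbf{The paper's route.} After rewriting the second germ in the same family $B_i$ via Lemma~\ref{FiniteFinite}, the paper claims that distinct germs at $x$ force the whole basic sets $\Theta_s$ and $\Theta_t$ to be disjoint. It justifies this by asserting $eB_i(r,\bm n,\bm m)\neq eB_i(r'',\bm n'',\bm m'')$ for every $e\in E$. Taken literally, that assertion fails: take $e=0$, or any $e$ annihilating both. What actually makes the conclusion true is that $T$ is $E^*$-unitary: a nonzero projection $e\le B_i(r,\bm n,\bm m)$ forces $B_i(r,\bm n,\bm m)$ itself to be a projection. Consequently the germs of $s$ and $t$ agree either on all of $D_s\cap D_t$ or nowhere.

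\textbf{Your route.} You never need this global dichotomy; you use two invariants instead.
\begin{enumerate}[(1)]
\item The exponent of $\bm t$. It is $\sim$-invariant, because $es=et\neq 0$ forces equal powers of $\bm t$. Hence it is constant on each $\Theta_s$, and gives outright disjointness when it differs.
\item The map $[(z,s)]\mapsto z\cdot s$. It is well defined on germs and continuous, so it separates germs with equal exponent but different targets once $U$ is shrunk.
\end{enumerate}
That these invariants, together with the base point, determine a germ is exactly the content of Lemmas~\ref{FiniteFinite} and~\ref{rmrkunitspace}. In effect you show that $[(z,s)]\mapsto\bigl(z,\,z\cdot s,\,\text{exponent of }\bm t\text{ in }s\bigr)$ is a continuous injection of $\mathcal{G}_{\mathrm{tight}}$ into the Hausdorff space $\overline{\mathbb N}^2\times\overline{\mathbb N}^2\times\mathbb Z$.

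\textbf{Comparison.} Your route is local and can be checked directly from the explicit germ classes. The paper's route, once repaired via $E^*$-unitarity, is shorter and needs no computation of targets. It also applies verbatim to any $E^*$-unitary inverse semigroup.

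\textbf{Exposition.} Three small points.
\begin{enumerate}[(1)]
\item Your ``key observation'' paragraph shows that equal exponent and equal displacement give equal germs near $x$. That is openness of the agreement set, which is automatic. Its only real role in your argument is to infer that distinct germs with equal exponent have different displacements, hence different targets, and that is fine.
\item Target separation is genuinely needed at the units $\phi(k_1,\infty)$ and $\phi(\infty,k_2)$, not only on $\mathbb N^2$. On $\mathbb N^2$ the units are isolated, so everything there is trivial.
\item The agreement set in your last paragraph is exactly $D_s\cap D_t$ or $\emptyset$, so the hedge about finitely many isolated points is unnecessary.
\end{enumerate}
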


	\begin{proof}
		Let $[\phi(\bm{k}),B_{i}(r,\bm{n},\bm{m})]\neq[\phi(\bm{k'}),B_{j}(r',\bm{n'},\bm{m'})]$ in $\mathcal{G}_{\mathrm{tight}}$. By the equivalence relation, there are two cases.
		\begin{enumerate}[(i)] 
			\item Let $\phi(\bm{k})\neq\phi(\bm{k'})$ in $\widehat{E}_{\mathrm{tight}}$. Since $\widehat{E}_{\mathrm{tight}}$ is Hausdorff, there exist disjoint open sets $U$ and $V$ of  $\widehat{E}_{\mathrm{tight}}$ such that $\phi(\bm{k})\in U\subset D_{B_{i}(r,\bm{n},\bm{m})}$ and $\phi(\bm{k'})\in V\subset D_{B_{j}(r',\bm{n'},\bm{m'})}$. Thus,  $\Theta(B_{i}(r,\bm{n},\bm{m}),U)$ and $\Theta(B_{j}(r',\bm{n'},\bm{m'}),V)$ are disjoint open sets in  $\mathcal{G}_{\mathrm{tight}}$ containing $[\phi(\bm{k}),B_{i}(r,\bm{n},\bm{m})]$ and $[\phi(\bm{k'}),B_{j}(r',\bm{n'},\bm{m'})]$ resp.
			
			\item Let $\phi(\bm{k}) = \phi(\bm{k'})$. By Lemma \ref{FiniteFinite}, $[(\phi(\bm{k}),
			B_{j}(r',\bm{n'},\bm{m'}))] =[(\phi(\bm{k}), B_{i}(r'',\bm{n''},\bm{m''}))]$ for some $r''\in\mathbb{Z}$ and $\bm{n''},\bm{m''}\in\mathbb{N}^{2}$.  Hence, for every $e \in E$,$eB_{i}(r,\bm{n},\bm{m}) \neq e B_{i}(r'',\bm{n''},\bm{m''}).$ Then $$\Theta(B_{i}(r,\bm{n},\bm{m}), D_{B_{i}(r,\bm{n},\bm{m})}) \cap \Theta(B_{i}(r'',\bm{n''},\bm{m''}), D_{B_{i}(r'',\bm{n''},\bm{m''})})=\emptyset,$$ since if there is an 
			$h \in \Theta(B_{i}(r,\bm{n},\bm{m}),D_{B_{i}(r,\bm{n},\bm{m})} ) \cap \Theta(B_{i}(r'',\bm{n''},\bm{m''}),D_{B_{i}(r'',\bm{n''},\bm{m''})} )$, then there exists an element $z \in D_{B_{i}(r,\bm{n},\bm{m})} \cap D_{B_{i}(r'',\bm{n''},\bm{m''})}$ such that
			$h = [(z, B_{i}(r,\bm{n},\bm{m}))] = [(z, B_{i}(r'',\bm{n''},\bm{m''}))]$, which is a contradiction. 
		\end{enumerate}
	\end{proof}
	\begin{ppsn}\label{etale}
		$\mathcal{G}_{\mathrm{tight}}$ is an  \etale\,\, groupoid.
	\end{ppsn}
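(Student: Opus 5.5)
We will show that the range map $r:\mathcal{G}_{\mathrm{tight}}\to\mathcal{G}_{\mathrm{tight}}^{0}$ is a local homeomorphism, using the basic open sets $\Theta(U,s)$ of Proposition~\ref{gpoid basis}. Every arrow lies in some bisection $\Theta(U,s)$ with $s=B_i(r,\bm n,\bm m)\in T$ and $U\subseteq D_s\cap\widehat{E}_{\mathrm{tight}}$ open. We will check three things:
\begin{enumerate}[(a)]
\item $\Theta(U,s)$ is open;
\item $r$ restricted to $\Theta(U,s)$ is injective with open image;
\item this restriction is a homeomorphism onto its image.
\end{enumerate}
Together these give that $r$ is a local homeomorphism.

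First, we identify the range and source of $[(x,s)]$. By the groupoid operations recalled in Section~\ref{preliminaries}, $[(x,s)][(x,s)]^{-1}=[(x,ss^*)]\simeq x$ and $[(x,s)]^{-1}[(x,s)]\simeq x\cdot s$. This is the computation already used in Lemma~\ref{Unitspace}. So on $\Theta(U,s)$ one of the anchor maps is $[(x,s)]\mapsto x$ and the other is $[(x,s)]\mapsto \alpha_s(x)$, depending on the convention. Either way, since $\alpha_s:D_s\to R_s$ is a homeomorphism (Section~\ref{preliminaries}), it suffices to show that $\pi_s:\Theta(U,s)\to U$, $[(x,s)]\mapsto x$, is a homeomorphism onto the open set $U$. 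Openness of $U$ in $\widehat{E}_{\mathrm{tight}}$ holds because $D_s$ is compact open in $\widehat{E}_0$ and $\widehat{E}_{\mathrm{tight}}$ carries the subspace topology.

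\emph{Bijectivity of $\pi_s$.} Surjectivity onto $U$ holds by definition. For injectivity, $[(x,s)]$ is determined by the pair $(x,s)$, and two elements $[(x,s)],[(y,s)]$ with $x=y$ coincide trivially. Hence $\pi_s$ is a bijection.

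\emph{Continuity and openness of $\pi_s$.} For openness, an open subset of $\Theta(U,s)$ is a union of basic sets $\Theta(V,t)\cap\Theta(U,s)$. We will show that each point $[(x,s)]=[(x,t)]$ of such an intersection has a neighbourhood $\Theta(W,s)\subseteq\Theta(V,t)\cap\Theta(U,s)$ with $W\ni x$ open. The equivalence gives a projection $e$ with $x(e)=1$ and $es=et$. Taking $W=U\cap V\cap D_e$ then works, since for $z\in W$ the same $e$ witnesses $[(z,s)]=[(z,t)]$. Thus $\pi_s(\Theta(V,t)\cap\Theta(U,s))$ is open, so $\pi_s$ is open. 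Continuity is immediate, because $\pi_s^{-1}(W)=\Theta(W,s)$ is basic open for $W\subseteq U$ open. Hence $\pi_s$ is a homeomorphism.

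Concretely, Lemma~\ref{FiniteFinite} and Lemma~\ref{Lemma 1} make all of this explicit. In every case the relevant $e$ can be taken to be $P_i(\bm n)$ or a larger $P_j(\bm h)$ in the ultrafilter, and $D_e$ is an explicit clopen subset of $\phi(\overline{\mathbb N}^2)$. For instance, $D_{P_4(\bm n)}=\{\phi(\bm n)\}$ and $D_{P_3(\bm n)}=\{\phi(n_1,k_2):k_2\ge n_2\}$.

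Combining the above, $r|_{\Theta(U,s)}$ is a homeomorphism onto an open subset of the unit space (either $U$ or $\alpha_s(U)$). Since these sets cover $\mathcal{G}_{\mathrm{tight}}$, $r$ is a local homeomorphism. The groupoid is locally compact and Hausdorff: Hausdorffness is the preceding proposition, and local compactness follows because each $\Theta(D_s,s)$ is homeomorphic to the compact set $D_s\cap\widehat{E}_{\mathrm{tight}}$. Hence $\mathcal{G}_{\mathrm{tight}}$ is étale.

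The main obstacle is the openness step, i.e. controlling intersections $\Theta(V,t)\cap\Theta(U,s)$ of different bisections. I would handle it via the witness projection $e$ as above, using Lemma~\ref{FiniteFinite} to see that $e$ can be chosen uniformly on the clopen set $D_e$.
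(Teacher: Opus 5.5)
Your proof is correct, but it takes a different route from the paper. The paper verifies nothing by hand. It cites Exel--Pardo for the fact that the germ groupoid $\mathcal{G}'=\Sigma/\sim$ is locally compact and \'etale. From this it infers that $\mathcal{G}'$, and then $\mathcal{G}_{\mathrm{tight}}$, carry Haar systems of counting measures, and it uses Williams' Proposition~1.29 to conclude that the range map is a local homeomorphism.

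You instead show directly that every basic set $\Theta(U,s)$ is an open bisection on which $r$ is the map $[(x,s)]\mapsto x$, a homeomorphism onto the open set $U$. The one nontrivial step, openness, is handled correctly by the witness projection $e$: it automatically witnesses $[(z,s)]=[(z,t)]$ for every $z\in U\cap V\cap D_e$. So your closing appeal to Lemma~\ref{FiniteFinite} for uniformity of $e$ is unnecessary. Applying the same argument with $s=\bm{1}$ also shows that $\mathcal{G}_{\mathrm{tight}}^0=\Theta(\widehat{E}_{\mathrm{tight}},\bm{1})$ is homeomorphic to $\widehat{E}_{\mathrm{tight}}$, which is what justifies reading $\pi_s$ as the range map into the unit space.

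What your route buys is self-containment and an explicit open-bisection structure, the same structure that underlies describing $C^*(\mathcal{G}_{\mathrm{tight}})$ through the functions $1_{\Theta_s}$. The paper's route is shorter, but it rests entirely on the literature and detours through Haar systems for what is a purely topological statement.

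One point that both arguments leave implicit, and that you should state, is invariance of $\widehat{E}_{\mathrm{tight}}$. To regard $\Theta(U,s)$, with $U\subseteq D_s\cap\widehat{E}_{\mathrm{tight}}$ open, as a basic open set of the reduction $\mathcal{G}_{\mathrm{tight}}$ (so that its $r$-image is open in $\widehat{E}_{\mathrm{tight}}$), you need $x\cdot s\in\widehat{E}_{\mathrm{tight}}$ whenever $x\in\widehat{E}_{\mathrm{tight}}\cap D_s$. This holds here: it follows from Lemma~\ref{FiniteFinite} together with Lemma~\ref{rmrkunitspace}, and in general from Exel's result that the tight characters form an invariant set. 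So there is no gap, but without it the passage from $\mathcal{G}'$ to $\mathcal{G}_{\mathrm{tight}}$ would not be justified.
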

	\begin{proof}
		By \cite[page 284]{ExePar-2016aa}, the groupoid $\mathcal{G}' = \Sigma / \sim$ is a locally compact, étale groupoid and therefore admits a left Haar system consisting solely of counting measures (see \cite[page 12]{Wil-2019a}, \cite[page 11, Proposition~1.29]{Wil-2019a}). Consequently, $\mathcal{G}_{\mathrm{tight}}$ also has a left Haar system consisting only of counting measures. Hence, the range map $r : \mathcal{G}_{\mathrm{tight}} \longrightarrow \widehat{E}_{\mathrm{tight}}$
		is a local homeomorphism (see \cite[page 11, Proposition~1.29]{Wil-2019a}). This implies that $\widehat{E}_{\mathrm{tight}}$ is an étale groupoid (see \cite[page 12]{Wil-2019a}).
	\end{proof} 
	\begin{ppsn}\label{g_amenable}
		$\clg_{\mathrm{tight}}$ is amenable. 
	\end{ppsn}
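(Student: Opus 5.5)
The plan is to apply Theorem~\ref{crit_amenable}, which reduces amenability of $\clg_{\mathrm{tight}}$ to two checks: that the orbit space $\clg_{\mathrm{tight}}\backslash\clg_{\mathrm{tight}}^0$ is $T_0$, and that every isotropy group is amenable.

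The standing hypotheses of Theorem~\ref{crit_amenable} are as follows.
\begin{enumerate}[(i)]
\item $\clg_{\mathrm{tight}}$ is locally compact and Hausdorff: this comes from the Hausdorff proposition above and from Proposition~\ref{etale}.
\item It has a Haar system of counting measures: this is also Proposition~\ref{etale}.
\item It is second countable. The inverse semigroup $T$ is countable, since it is indexed by $\{1,2,3,4\}\times\bbz\times\bbn^2\times\bbn^2$. The unit space $\widehat{E}_{\mathrm{tight}}\cong\overline{\bbn}^2$ (Proposition~\ref{Homeomorphism}) is compact and metrizable. So the basic open sets $\Theta(U,s)$, with $s\in T$ and $U$ ranging over a countable base of $D_s$, form a countable base by Proposition~\ref{gpoid basis}.
\end{enumerate}

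The isotropy condition is immediate from Lemma~\ref{fibres 1}(c): every isotropy group is isomorphic to $\bbz$, which is abelian and hence amenable.

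The main work is the $T_0$ property of the orbit space. Using Lemma~\ref{rmrkunitspace}, together with Lemma~\ref{Size of X} to know which elements act on which units, I will identify the orbits under $\phi$ as exactly the four sets
\[
\mathbb N\times\mathbb N,\qquad \mathbb N\times\{\infty\},\qquad \{\infty\}\times\mathbb N,\qquad \{(\infty,\infty)\}.
\]
Transitivity inside each set:
\begin{enumerate}[(i)]
\item On $\bbn^2$, the element $B_4(0,\bm k,\bm n)$ moves $\phi(\bm k)$ to $\phi(\bm n)$.
\item On $\bbn\times\{\infty\}$, the element $B_3(r,(k_1,n_2),\bm m)$ moves $\phi(k_1,\infty)$ to $\phi(m_1,\infty)$, and $m_1$ can be chosen arbitrarily subject to $m_1\le m_2$.
\item The set $\{\infty\}\times\bbn$ is handled symmetrically with $B_2$.
\end{enumerate}
Invariance of each set is read off from Lemma~\ref{rmrkunitspace}, which shows that no element maps between these sets.

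Hence the orbit space has four points. Denote them $O_{ff}$, $O_{f\infty}$, $O_{\infty f}$ and $O_{\infty\infty}$, for the orbits of $\bbn^2$, $\bbn\times\{\infty\}$, $\{\infty\}\times\bbn$ and $(\infty,\infty)$ respectively. A subset of the orbit space is open exactly when the union of the corresponding orbits is open in $\overline{\bbn}^2$. This gives the following open sets.
\begin{enumerate}[(i)]
\item $\bbn^2$ is open, so $\{O_{ff}\}$ is open.
\item $\bbn^2\cup(\bbn\times\{\infty\})=\bbn\times\overline{\bbn}$ is open.
\item $\overline{\bbn}\times\bbn$ is open.
\item $\overline{\bbn}^2$ minus the point $(\infty,\infty)$ is open.
\end{enumerate}
These open sets separate every pair of distinct points:
\begin{enumerate}[(i)]
\item $O_{ff}$ is separated from each of the other three orbits by $\{O_{ff}\}$.
\item $O_{f\infty}$ versus $O_{\infty f}$ is separated by the open set $\bbn\times\overline{\bbn}$.
\item $O_{f\infty}$ versus $O_{\infty\infty}$ is separated by the same set $\bbn\times\overline{\bbn}$.
\item $O_{\infty f}$ versus $O_{\infty\infty}$ is separated by $\overline{\bbn}\times\bbn$.
\end{enumerate}
So the orbit space is $T_0$ (in fact each orbit is locally closed), and Theorem~\ref{crit_amenable} yields amenability.

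The step I expect to be the most delicate is the exact orbit decomposition, specifically invariance. One must check that no $B_i$ connects a unit with a finite coordinate to one with an infinite coordinate. This follows because, by Lemma~\ref{Size of X}, at $\phi(\bm k)$ with $\bm k\in\bbn^2$ every germ is represented by some $B_4$, and $B_4$ produces finite targets. The infinite cases are handled similarly via Lemma~\ref{FiniteFinite}(ii)--(iv).
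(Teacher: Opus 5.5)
Your proposal is correct and follows the same route as the paper. Both apply Theorem~\ref{crit_amenable}, read off the four orbits from Lemma~\ref{rmrkunitspace}, check that the quotient topology on the resulting four-point orbit space is $T_0$, and use that every isotropy group is $\bbz$. Your explicit checks of second countability, orbit invariance and pairwise separation fill in details the paper leaves implicit without changing the argument.
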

	\begin{proof}
		We will use the criterion for amenability from Theorem \ref{crit_amenable}. The orbit space $\clg_{\mathrm{tight}}\backslash\clg_{\mathrm{tight}}^0$ is $\left\{\bbn^2, \bbn\times\{\infty\}, \{\infty\}\times\bbn,\{(\infty,\infty)\}\right\}$ (Lemma \ref{rmrkunitspace}). Observe that the quotient topology comes out to be \begin{align*}\tau=
			\Big\{
			&\emptyset,\;
			\{\mathbb{N}^2\},\;
			\{\mathbb{N}^2,\;\{\infty\}\times \mathbb{N}\},\;
			\{\mathbb{N}^2,\;\mathbb{N}\times \{\infty\}\},\;\\
			&\{\mathbb{N}^2,\;\{\infty\}\times \mathbb{N},\;\mathbb{N}\times \{\infty\}\},\;
			\{\mathbb{N}^2,\;\{\infty\}\times \mathbb{N},\;\mathbb{N}\times \{\infty\},\;\{(\infty,\infty)\}\}
			\Big\}.
		\end{align*}
		Hence $\clg_{\mathrm{tight}}\backslash\clg_{\mathrm{tight}}^0$ is $T_0$. Also, the isotropy groups $\clg_u^u\cong\bbz$ are amenable. Hence $\clg_{\mathrm{tight}}$ is amenable.
	\end{proof}
	\section{Isomorphism between $C^*(\mathcal{G}_{\mathrm{tight}})$ and $D_0$}\label{sect_isomism}
	In this section, we prove that the $C^*$-algebra $C^*(\mathcal{G}_{\mathrm{tight}})$ is isomorphic to $D_0$. This isomorphism is obtained through the induced representation $\operatorname{Ind}_{\delta_v}$ of $C^*(\mathcal{G}_{\mathrm{tight}})$ where $v=\phi(0,0)$. Using the representation $\operatorname{Ind}_{\delta_v}$, we first generate the reduced $C^*$-algebra $C^*_{\mathrm{red}}(\clg_{\mathrm{tight}})$. Since $\clg_{\mathrm{tight}}$ is amenable (Proposition \ref{g_amenable}), the reduced $C^*$-algebra $C^*_{\mathrm{red}}(\clg_{\mathrm{tight}})$ coincides with the $C^*$-algebra $C^*(\clg_{\mathrm{tight}})$.
	\begin{ppsn}
		The representation $\operatorname{Ind}_{\delta_v}:C^*(\mathcal{G}_{\mathrm{tight}})\rightarrow \mathcal{L}\left(\ell^2\left(\mathcal{G}_{\mathrm{tight},\,v}\right)\right)$ is faithful.
	\end{ppsn}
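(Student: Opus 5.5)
The plan is to apply Proposition~\ref{faithful} with $\mu=\delta_v$, $v=\phi(0,0)$, and then pass from the reduced to the full algebra using amenability. Proposition~\ref{faithful} says that $\operatorname{Ind}_{\delta_v}$ is faithful on $C^*_{\mathrm{red}}(\clg_{\mathrm{tight}})$ as soon as the largest open invariant subset $U\subseteq\clg^0_{\mathrm{tight}}$ with $\delta_v(U)=0$ is empty. Since $\clg_{\mathrm{tight}}$ is amenable (Proposition~\ref{g_amenable}), the reduced and full norms coincide on $C_c(\clg_{\mathrm{tight}})$, so $C^*_{\mathrm{red}}(\clg_{\mathrm{tight}})=C^*(\clg_{\mathrm{tight}})$. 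Faithfulness on the full algebra then follows.

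The main step is to show that every nonempty open invariant subset $U$ of $\clg^0_{\mathrm{tight}}\cong\widehat{E}_{\mathrm{tight}}\cong\overline{\bbn}^2$ contains $v$. Invariance means $U$ is a union of orbits. By Lemma~\ref{rmrkunitspace}, the orbits are $\phi(\bbn^2)$, $\phi(\bbn\times\{\infty\})$, $\phi(\{\infty\}\times\bbn)$ and $\{\phi(\infty,\infty)\}$. I would check that each orbit other than $\phi(\bbn^2)$ lies in the closure of $\phi(\bbn^2)$, so any open set meeting it must also meet $\phi(\bbn^2)$:
\begin{itemize}
\item A point $(k_1,\infty)$ is the limit of $(k_1,n)$ as $n\to\infty$.
\item A point $(\infty,k_2)$ is the limit of $(n,k_2)$.
\item The point $(\infty,\infty)$ is the limit of $(n,n)$.
\end{itemize}
Each of these uses the homeomorphism $\phi$ of Proposition~\ref{Homeomorphism}, which carries the product topology on $\overline{\bbn}^2$ to the topology of $\widehat{E}_{\mathrm{tight}}$. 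Hence a nonempty open invariant $U$ meets $\phi(\bbn^2)$, and by invariance it contains the whole orbit $\phi(\bbn^2)$, in particular $v=\phi(0,0)$.

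To confirm that $\phi(\bbn^2)$ really is a single orbit, I would use Lemma~\ref{rmrkunitspace}(i). For any $\bm n\in\bbn^2$, the element $B_4(0,(0,0),\bm n)\in T$ satisfies $(\phi(0,0),B_4(0,\bm 0,\bm n))\in\Sigma$ by Lemma~\ref{Size of X}(iv), and $\phi(0,0)\cdot B_4(0,\bm 0,\bm n)=\phi(\bm n)$. So every point of $\phi(\bbn^2)$ is in the orbit of $v$. Consequently $\delta_v(U)=1\neq 0$ for every nonempty open invariant $U$, and the largest open invariant $\delta_v$-null set is $\emptyset$.

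The only delicate point is matching the hypotheses of Proposition~\ref{faithful}. The measure $\delta_v$ is a Radon measure on $\clg^0_{\mathrm{tight}}$, and $\operatorname{Ind}_{\delta_v}$ is the representation on $\ell^2(\clg_{\mathrm{tight},v})$ given by (\ref{discrete ind}), since the groupoid is étale (Proposition~\ref{etale}) with counting-measure Haar system. I expect the main obstacle to be this bookkeeping rather than any hard estimate. In particular, one must make sure invariance is taken in the sense of orbits $G(x)=\{x\cdot t\}$, and that the null set is taken with respect to the measure on units, not on $\clg_{\mathrm{tight}}$. With that in place, the argument above completes the proof.
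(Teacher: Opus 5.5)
Your proposal is correct and follows essentially the same route as the paper. You apply Proposition~\ref{faithful} with $\mu=\delta_v$, note that an open invariant set is a union of the four orbits and that any nonempty one must meet, hence contain, the dense orbit $\phi(\bbn^2)\ni v$, and use amenability (Proposition~\ref{g_amenable}) to identify $C^*_{\mathrm{red}}(\clg_{\mathrm{tight}})$ with $C^*(\clg_{\mathrm{tight}})$; your density formulation is in fact more accurate than the paper's remark that ``no nonempty union of these orbits is open,'' which literally fails for the open orbit $\phi(\bbn^2)$ itself.
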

	\begin{proof}
		Let $U$ be the largest open invariant subset of $\mathcal{G}_{\mathrm{tight}}^0$ such that $\delta_v(U)=0$, which implies $(0,0)\notin U$. The orbits of $\mathcal{G}_{\mathrm{tight}}^0$ under the action of $\mathcal{G}_{\mathrm{tight}}$ are $\bbn^2$, $\mathbb{N}\times\{\infty \}$, $\{\infty\}\times\mathbb{N}$, and $\{(\infty, \infty)\}$. Since $U$ is invariant, $U$ is some union of these orbits. Because every nonempty open set in $\overline{\bbn}^2$ contains some element of $\bbn^2$, no nonempty union of these orbits is open. Hence $U=\emptyset$. Thus, by Proposition \ref{faithful}, $\operatorname{Ind}_{\delta_v}$ is faithful.
	\end{proof}
	
	From Lemma \ref{fibres 1}, we have the bijection $\Delta:\bbn\times\bbn\times\bbz\rightarrow \mathcal{G}_{\textrm{tight},\,v}$. Then $$U:\ell^2\left(\mathcal{G}_{\textrm{tight},\,v}\right)\rightarrow\ell^2(\bbn\times\bbn\times\bbz)\,;\quad U(e_z)\coloneq e_{\Delta^{-1}(z)},\quad \,\, z\in\mathcal{G}_{\textrm{tight},\,v},$$ is a unitary map for all $v\in\clg_{\mathrm{tight}}^0$. This gives an isomorphism $$\operatorname{Ad}_U:\mathcal{L}\left(\ell^2\left(\mathcal{G}_{\mathrm{tight},\,v}\right)\right)\rightarrow\mathcal{L}\left(\ell^2(\bbn\times\bbn\times\bbz)\right)\,;\quad \operatorname{Ad}_Uw\coloneq U^*w U,\quad\,\, w\in\mathcal{L}\left(\ell^2\left(\mathcal{G}_{\textrm{tight},\,v}\right)\right).$$
	Thus, we have the following diagram,
	\begin{center}
		\begin{tikzcd}[
			arrows={-{Stealth[length=9pt,width=9pt]}}, 
			line width=1.5pt,                         
			column sep=huge, row sep=huge             
			]
			C^*(\mathcal{G}_{\mathrm{tight}}) \arrow[r, "\operatorname{Ind}_{\delta_v}"] \arrow[dr, swap, "\Psi"]
			& \mathcal{L}\left(\ell^2\left(\mathcal{G}_{\textrm{tight},\,v}\right)\right) \arrow[d, "\operatorname{Ad}_U"] \\
			& \mathcal{L}(\mathbb{N} \times \mathbb{N} \times \mathbb{Z})
		\end{tikzcd}
	\end{center}
	where $\Psi :C^*(\mathcal{G}_{\mathrm{tight}})\rightarrow \mathcal{L}(\mathbb{N} \times \mathbb{N} \times \mathbb{Z})\,\,;\,\Psi \coloneq \operatorname{Ad}_U\circ \operatorname{Ind}_{\delta_v} $ is faithful.
	\begin{lmma}\label{pre lemma} 
		\begin{enumerate}[(i)]
			\item		$\phi(\bm{k})\cdot s^0_1=\phi(k_1+1,k_2+1)$ for $\bm{k}=(k_1, k_2)\in\bbn$.
			\item $\phi(k_1,0)\cdot s^0_2=\phi(k_1+1,0),\,k_1\in\bbn$ for all $k_{1}\in\mathbb{N}$.
		\end{enumerate}
	\end{lmma}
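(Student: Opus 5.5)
The plan is to compute both actions directly from the definition $(x\cdot s)(e)=x(ses^{*})$. For a fixed generator $s=s^0_i$, we first record how conjugation by $s$ acts on the projections $P_j(\bm n)\in E$. Then we identify the filter $A_{x\cdot s}=\{e\in E: ses^{*}\in A_x\}$, and recognise it as one of the ultrafilters $A_{\Lambda(\bm l)}$ of Proposition~\ref{Image}.

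\textbf{Domain check.} We first confirm that $x\in D_s$, i.e.\ $x(ss^{*})=1$, so that $x\cdot s$ is indeed a character.
\begin{itemize}
\item For (i): $s^0_1(s^0_1)^{*}$ is a tensor product of $S^{*}S$, $S^{*}S$ and $\bm t\bm t^{*}$. Up to the adjoint convention, it is one of $P_1(\bm 0)=1$ or $P_1((1,1))$, and in either case it lies in $A_{\Lambda(\bm k)}$ by (\ref{ALambdaK}).
\item For (ii): $s^0_2$ carries the factor $p$ in the first leg, so we check that the corresponding $P_2$-type projection lies in $A_{\Lambda(k_1,0)}$, using the description $A_{\Lambda(k_1,k_2)}\supseteq\{P_2(\bm n): k_2=n_2\le n_1\le k_1\}$.
\end{itemize}

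\textbf{Conjugation on projections.} The scalar sign and the $\bm t$-factor drop out of $ses^{*}$, since $\bm t\bm t^{*}=1$.
\begin{itemize}
\item For $s=s^0_1$: the multiplication rule in the proof of Proposition~\ref{Semigroup U} gives $S^{*}(S^{*a}p^{k}S^{a})S=S^{*(a+1)}p^{k}S^{a+1}$. Hence conjugation sends $P_j(\bm n)\mapsto P_j(\bm n+(1,1))$ for each $j$, with the obvious reverse shift if the convention is $s^{*}es$.
\item For $s=s^0_2$: the first leg becomes $pXp$, which is nonzero only for the $p$-projections in the first leg, or the $S^{*n}S^n$ ones with $n=0$. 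The second leg is shifted as before.
\end{itemize}
Plugging these formulas into (\ref{ALambdaK}) gives an explicit description of $A_{x\cdot s}$ as a set of $P_j(\bm n)$'s.

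\textbf{Identifying the character.} Rather than prove equality of filters element by element, we will show the containment $A_{\Lambda(\bm l)}\subseteq A_{x\cdot s}$ for the target $\bm l$. Here $\bm l=(k_1+1,k_2+1)$ in (i) and $\bm l=(k_1+1,0)$ in (ii).
\begin{itemize}
\item In (i) with $\bm k\in\mathbb N^2$, it suffices by Remark~\ref{Minimum Projections} to check the single minimum element $P_4(\bm l)$: all of $A_{\Lambda(\bm l)}$ lies above it and $A_{x\cdot s}$ is upward closed.
\item In (ii) the same minimum-element argument applies.
\end{itemize}
Since $x\cdot s$ is a character, $A_{x\cdot s}$ is a filter. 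Maximality of $A_{\Lambda(\bm l)}$ (Proposition~\ref{Image}) then forces $A_{x\cdot s}=A_{\Lambda(\bm l)}$, i.e.\ $x\cdot s=\phi(\bm l)$. As a cross-check, this should agree with Lemma~\ref{rmrkunitspace}(i), via the class of $(\phi(\bm k),s)$ in $\bm A_1$ from Lemma~\ref{FiniteFinite}.

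The main obstacle is bookkeeping of conventions: whether $x\cdot s$ uses $ses^{*}$ or $s^{*}es$, and how $s^0_1,s^0_2$ are written as $B_i(r,\bm n,\bm m)$. These choices determine whether the index shifts by $+1$ or $-1$, and which leg the $p$-factor in $s^0_2$ affects. I would first write $s^0_1=B_1(\mp1,\cdot,\cdot)$ and $s^0_2$ explicitly in the $B_2$ form. Then I would read off $D_s$ and the target from Lemma~\ref{Lemma 1} and Lemma~\ref{rmrkunitspace}, and only afterwards do the containment check above.
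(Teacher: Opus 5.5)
Your mechanism is a sound, cleaner version of the paper's omitted comparison of supports: describe $A_{x\cdot s}$ by conjugation, test only the minimum element $P_4(\bm{l})$ of the candidate ultrafilter, then use maximality. The gap is that the convention you defer as ``bookkeeping'' decides whether the statement is true, and the steps you commit to fail under the conventions the paper fixes. These are $(x\cdot s)(e)=x(ses^*)$ on $D_s=\{x:x(ss^*)=1\}$, as used in Lemmas \ref{FiniteFinite} and \ref{rmrkunitspace}, with $S^*$ the isometry.

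Under them $s^0_1{s^0_1}^*=S^*S\otimes S^*S\otimes 1=P_1((1,1))$. This lies in $A_{\Lambda(\bm{k})}$ only when $\min\{k_1,k_2\}\geq 1$. So your domain claim is false, and $\phi(\bm{k})\cdot s^0_1$ is not a character when a coordinate of $\bm{k}$ is $0$.

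Worse, your own formula $s^0_1P_j(\bm{n}){s^0_1}^*=P_j(\bm{n}+(1,1))$ gives $P_4(\bm{l})\in A_{x\cdot s}$ if and only if $P_4(\bm{l}+(1,1))\in A_{\Lambda(\bm{k})}$, i.e. $\bm{l}=\bm{k}-(1,1)$. So the containment test for $\bm{l}=\bm{k}+(1,1)$ fails, and the argument actually yields $\phi(\bm{k})\cdot s^0_1=\phi(k_1-1,k_2-1)$ for $k_1,k_2\geq 1$. Your proposed cross-check confirms this: $(\phi(\bm{k}),s^0_1)\sim(\phi(\bm{k}),P_4(\bm{k})s^0_1)$ and $P_4(\bm{k})s^0_1=B_4(1,\bm{k},\bm{k}-(1,1))$, so Lemma \ref{rmrkunitspace}(i) returns $\phi(\bm{k}-(1,1))$.

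Formula (i) as written holds only for the opposite action $(x\cdot s)(e)=x(s^*es)$ on $\{x:x(s^*s)=1\}$; in the paper's convention it describes $\phi(\bm{k})\cdot{s^0_1}^*$. With that action, ${s^0_1}^*s^0_1=1$ removes the domain problem, and ${s^0_1}^*P_4(\bm{k}+(1,1))s^0_1=P_4(\bm{k})$ makes your minimum-element check go through.

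In (ii) the leg carrying $p$ also matters. For $s^0_2=-p\otimes S^*\otimes\bm{t}$ as displayed, $s^0_2{s^0_2}^*=p\otimes S^*S\otimes 1=P_3((0,1))$. This is a $P_3$-type projection, not $P_2$-type, since $B_2$ has $p$ in the second leg. Moreover $P_3((0,1))\notin A_{\Lambda(k_1,0)}$ for every $k_1\in\mathbb{N}$, so no $\phi(k_1,0)$ lies in $D_{s^0_2}$. Even with the $s^*es$ action, $\phi(k_1,0)\cdot s^0_2$ is defined only for $k_1=0$, where it equals $\phi(0,1)$.

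Claim (ii) is correct for the generator $-S^*\otimes p\otimes\bm{t}$ together with the $s^*es$ action. That generator is the displayed $s^0_3$, and it is how $s^0_2$ is actually treated in Section \ref{sect_rep} and in the proof of the isomorphism theorem. Then $s^*s=1\otimes p\otimes 1=P_2(\bm{0})\in A_{\Lambda(k_1,0)}$ and $s^*P_4((k_1+1,0))s=P_4((k_1,0))$, and your argument finishes. So you must fix both conventions explicitly and state the lemma in the form they force. With the definitions as given in the paper, the plan cannot be completed.
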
	
	\begin{proof}  The proof uses case analysis to verify that the supports of both these characters are equal. We skip the details.
	\end{proof}
	\begin{thm}\label{the isomism}
		The map $\Psi$ is an isomorphism from $C^*(\mathcal{G}_{\mathrm{tight}})$ onto $D_0$.
	\end{thm}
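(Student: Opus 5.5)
Since $\Psi=\operatorname{Ad}_U\circ\operatorname{Ind}_{\delta_v}$ is already known to be faithful, by amenability (Proposition \ref{g_amenable}) together with the faithfulness of $\operatorname{Ind}_{\delta_v}$, it remains only to identify the image. By Proposition \ref{gpoid basis}(iii), $C^*(\mathcal{G}_{\mathrm{tight}})$ is generated by the indicator functions $1_{\Theta_s}$, $s\in T$. Moreover, $\Theta_{st}=\Theta_s\Theta_t$ and $\Theta_s^{-1}=\Theta_{s^*}$, and every element of $T$ is a word in the $s_i^0$ and their adjoints. Hence $C^*(\mathcal{G}_{\mathrm{tight}})$ is generated as a $C^*$-algebra by the four functions $1_{\Theta_{s_i^0}}$, $1\le i\le 4$. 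Here we use that for an étale groupoid $1_{\Theta_s}*1_{\Theta_t}=1_{\Theta_s\Theta_t}$ because $\Theta_s$ and $\Theta_t$ are open bisections, and that $1_{\Theta_s}^*=1_{\Theta_{s^*}}$. The plan is therefore to show that
\[
\Psi(1_{\Theta_{s_i^0}})=s_i^0,\qquad 1\le i\le 4,
\]
possibly up to harmless sign conventions, which can be absorbed by a unitary or by the sign already present in $s_i^0$. Once this holds, the image of $\Psi$ is the $C^*$-algebra generated by the $s_i^0$, namely $D_0$, and $\Psi$ is an isomorphism onto $D_0$.

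To compute $\Psi(1_{\Theta_s})$, I will use formula (\ref{discrete ind}). For $x\in\mathcal{G}_{\mathrm{tight},v}$, with $v=\phi(0,0)$,
\[
\operatorname{Ind}_{\delta_v}(1_{\Theta_s})e_x=\sum_{y}1_{\Theta_s}(y)\,e_{yx}.
\]
Since $\Theta_s$ is a bisection, there is at most one $y\in\Theta_s$ with $s(y)=r(x)$, and it exists iff $r(x)\in D_{s^*}$ in the appropriate convention. So $\operatorname{Ind}_{\delta_v}(1_{\Theta_s})$ is the partial isometry sending $e_x$ to $e_{[r(x),s^{\ast}]\,x}$, or $e_{[\cdot,s]x}$ depending on convention, when this is defined, and to $0$ otherwise. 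By Lemma \ref{fibres 1}(b)(iv), the source fibre is
\[
\mathcal{G}_{\mathrm{tight},v}=\{[(\phi(\bm n),B_4(r,\bm n,\bm 0))]\},
\]
and $\Delta$ identifies this element with $(n_1,n_2,r)\in\mathbb{N}\times\mathbb{N}\times\mathbb{Z}$. I will fix $\Delta$ carefully so that the orientation of $r$ and of the shifts matches the definitions of $S^*$ and $\bm t$.

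I then compute the product in the groupoid:
\[
[(\phi(\bm m),t)]\,[(\phi(\bm n),B_4(r,\bm n,\bm 0))]=[(\phi(\bm m),\,tB_4(r,\bm n,\bm 0))],
\]
which requires $\phi(\bm m)\cdot t=\phi(\bm n)$. Then I reduce this to the normal form $B_4(r',\bm m,\bm 0)$ via Lemma \ref{FiniteFinite}(i). The cases are as follows.

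For $t=s_1^0=B_1(1,\bm 0,(1,1))$, or its adjoint depending on the convention, Lemma \ref{pre lemma}(i) gives that the transition is $(n_1,n_2,r)\mapsto(n_1\pm1,n_2\pm1,r\pm1)$, defined exactly when the decrement stays in $\mathbb{N}$. This reproduces $S^*\otimes S^*\otimes\bm t$.

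For $s_2^0=-p\otimes S^*\otimes\bm t$, the product $s_2^0B_4(r,\bm n,\bm 0)$ is nonzero only when the first coordinate is $0$ in the relevant slot. Lemma \ref{pre lemma}(ii) together with Lemma \ref{Size of X}(ii) shows that the move shifts only $n_2$ and $r$ and kills the vectors with $n_1\neq0$. This gives $p\otimes S^*\otimes\bm t$.

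The case of $s_3^0$ is symmetric, and $s_4^0$ changes only $r$ and is supported on $n_1=n_2=0$.

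The signs in the $s_i^0$ are carried along because the $B_i$ are defined as operators and $T$ contains the words with signs. Failing that, I would absorb them by conjugating with the diagonal unitary $\bm t$-independent sign, or by noting that $-s$ and $s$ generate the same algebra. I expect this to suffice.

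The main obstacle is the bookkeeping in this step. One difficulty is choosing $\Delta$ so that the convolution action lines up exactly with $S^*$ rather than $S$, and with $\bm t$ rather than $\bm t^{-1}$. The other is verifying the product rule for the $B_i$ (proof of Proposition \ref{Semigroup U}) when reducing $s_i^0B_4(r,\bm n,\bm 0)$ to the normal form of Lemma \ref{FiniteFinite}(i), particularly for the boundary cases $n_j=0$ where $p$ appears. I would first check the behaviour on $e_{(0,0,r)}$ and $e_{(1,1,r)}$ to pin down the conventions, then extend by the general formula.
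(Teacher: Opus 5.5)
Your proposal is correct and follows essentially the paper's route: faithfulness comes from amenability and Proposition \ref{faithful}, Proposition \ref{gpoid basis} reduces the problem to the generators $1_{\Theta_{s_i^0}}$, and evaluating $\operatorname{Ind}_{\delta_v}$ on them via the \'etale convolution formula on the source fibre $\mathcal{G}_{\mathrm{tight},v}$ (with Lemma \ref{pre lemma} and the normal forms of Lemma \ref{FiniteFinite}) identifies $\Psi(1_{\Theta_{s_i^0}})$ with $s_i^0$. Your treatment of signs and orientation is more careful than the paper's, but rest it on your fallback ($\pm s_i^0$ and their adjoints generate the same $C^*$-algebra, plus a suitable choice of $\Delta$) rather than on $T$ containing signed words: $T$ as defined consists only of the unsigned $B_i$, and signed copies would be distinct arrows that change the fibres.
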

	\begin{proof}
		From Proposition \ref{gpoid basis}, $C^*(\mathcal{G}_{\mathrm{tight}})$ is generated by $\left\{\mathbbm{1}_{\Theta_s}:s\in T\right\}$. Since $T$ is generated by the set $\left\{s_i^{0}:i=1,2,3,4\right\}$, hence, by Proposition \ref{gpoid basis} (i), $C^*(\mathcal{G}_{\mathrm{tight}})$ is generated by the set  $\left\{\mathbbm{1}_{\Theta_{s_i^{0}}}:i=1,2,3,4\right\}$. Let $\mathbbm{1}_{\Theta_{s_i^{0}}}$ be denoted by  $a_i$. We will prove that $\Psi(a_i)=s_i^{0}$ for $i=1, 2, 3,4$.
		
		Since $\Psi =\operatorname{Ad}_U\circ \operatorname{Ind}_{\delta_v}$, we start by evaluating  $\operatorname{Ind}_{\delta_v}$. From Proposition \ref{etale}, the groupoid $\clg_{\mathrm{tight}}$ is étale. In equation (\ref{discrete ind}), consider the generators $a_i\in C_c(\mathcal{G}_{\mathrm{tight}})$, the basis elements $e_{\Delta(m_0,n_0,r_0)}\in\ell^2(\mathcal{G}_{\textrm{tight},\,v})$ and the elements $\Delta(m,n,r)\in\mathcal{G}_{\textrm{tight},\,v}$ for all $v\in \clg^0_{\mathrm{tight}}$. One can verify that $\mathcal{G}^{r(\Delta(m,n,r))}_{\mathrm{tight}}=\left\{[\phi(m,n),s]\in\mathcal{G}_{\mathrm{tight}}\right\}$. Hence, equation (\ref{discrete ind}) becomes
		{\small
			\begin{equation}\label{ind compu}
				\operatorname{Ind}_{\delta_v}\left(a_i\right)e_{\Delta(m_0,n_0,r_0)}\left(\Delta(m,n,r)\right)=\sum_{\eta\in\left\{[\phi(m,n),s]\in\mathcal{G}_{\mathrm{tight}}\right\}}a_i(\eta)e_{\Delta(m_0,n_0,r_0)}\left(\eta^{-1}\Delta(m,n,r)\right).
			\end{equation}
		}
				\textbf{Claim:} $\Psi(a_{i})=s^0_i$ for $i=1,2,3,4$.
\begin{enumerate}[(i)]
\item We intend to prove that $\operatorname{Ind}_{\delta_v}\left(a_1\right)e_{\Delta(m_0,n_0,r_0)}=e_{\Delta(m_0-1,n_0-1,r_0+1)}$, which in turn implies $\Psi(a_1)=s^0_1$. From equation (\ref{ind compu}), consider the expression $a_1(\eta)$ where $\eta\in\{[\phi(m,n),s]\in\mathcal{G}_{\mathrm{tight}}\}$. From Lemma \ref{Lemma 1}, $\Theta_{s^0_1}=\left\{[\phi(k),s^0_1]\in\mathcal{G}_{\mathrm{tight}}:k\in\bbn^2\right\}$. Hence we have $a_1(\eta)=1$ if and only if $\eta=[\phi(m,n),s^0_1]$. Thus, equation (\ref{ind compu}) becomes
		\begin{align*}	
			\operatorname{Ind}_{\delta_v}\left(a_1\right)e_{\Delta(m_0,n_0,r_0)}\left(\Delta(m,n,r)\right)&=e_{\Delta(m_0,n_0,r_0)}\left([\phi(m,n),s^0_1]^{-1}\Delta(m,n,r)\right)\\&=e_{\Delta(m_0-1,n_0-1,r_0+1)}\left(\Delta(m,n,r)\right),
		\end{align*}	
	thanks to Lemma \ref{pre lemma} (we skip the trivial cases of $m_0=0$ and $n_0=0$).	Hence, we have $$\operatorname{Ind}_{\delta_v}\left(a_1\right)e_{\Delta(m_0,n_0,r_0)}=e_{\Delta(m_0-1,n_0-1,r_0+1)},\quad\Psi e_{\Delta(m_0,n_0,r_0)}=e_{\Delta(m_0-1,n_0-1,r_0+1)}.$$
		Therefore, the claim follows.
\item Similarly as above, we have $a_2(\eta)=1$ if and only if $n=0$ and $\eta=[\phi(m,0),s^0_2]$. Thus,
		\begin{equation}\label{noname2}
				\operatorname{Ind}_{\delta_v}\left(a_2\right)e_{\Delta(m_0,n_0,r_0)}\left(\Delta(m,n,r)\right)=\bbone_n e_{\Delta(m_0,n_0,r_0)}\left(\left[\phi(m,0)\cdot s^0_2,p_{0\,m+1}\otimes p\otimes t^{r-1}\right]\right).
			\end{equation}
But, from Lemma \ref{pre lemma}, we have
		\begin{align*}
			e_{\Delta(m_0,n_0,r_0)}\left(\left[\phi(m,0)\cdot s^0_2,p_{0\,m+1}\otimes p\otimes t^{r-1}\right]\right)&=e_{\Delta(m_0,n_0,r_0)}\left(\Delta(m+1,0,r-1)\right)\\
			&=e_{\Delta(m_0-1,n_0,r_0+1)}\left(\Delta(m,0,r)\right),
		\end{align*}
		and therefore, $\operatorname{Ind}_{\delta_v}(a_2)e_{\Delta(m_0,n_0,r_0)}(\Delta(m,n,r))=\bbone_n e_{\Delta(m_0-1,n_0,r_0+1)}(\Delta(m,n,r))$.
		Hence\\ $\Psi(a_2)e_{m_0n_0r_0}=\bbone_{n_0}e_{m_0-1\,0\,r_0+1}$. Thus, the claim follows.
		\item The proof of $\Psi(a_{3})=s^0_{3}$ follows similarly as in the above case.
		
		\item We have $a_4(\eta)=1$ if and only if $(m,n)=(0,0)$ and $\eta=[\phi(0,0),s^0_4]$. Also $\phi(0,0)\cdot s^0_4=\phi(0,0)$. Thus, we have
		\begin{align*}
			\operatorname{Ind}_{\delta_v}\left(a_4\right)e_{\Delta(m_0,n_0,r_0)}\left(\Delta(m,n,r)\right)&=\bbone_{(m,n)}e_{\Delta(m_0,n_0,r_0)}\left(\Delta(0,0,r-1)\right)\\
			&=\bbone_{(m,n)}e_{\Delta(m_0,n_0,r_0+1)}\left(\Delta(0,0,r)\right).\end{align*}
		Therefore, 
		$\operatorname{Ind}_{\delta_v}(a_4)e_{\Delta(m_0,n_0,r_0)}=	\bbone_{(m_0,n_0)}e_{\Delta(0,0,r_0+1)}$ and $\Psi(a_4)e_{m_0n_0r_0}= \bbone_{(m_0,n_0)}e_{0\,0\,r_0+1}$.\end{enumerate}
		Thus, the claim follows. Hence, $\Psi$ is an isomorphism from $C^*(\mathcal{G}_{\mathrm{tight}})$ onto $D_0$.
	\end{proof}
	\section{Irreducible Representations} \label{sect_rep}
	In this section, we look at the irreducible representations induced by the isotropy groups of the groupoid $C^*$-algebra $C^*(\mathcal{G}_{\mathrm{tight}})$ and show that they are equivalent to the irreducible representations of $C(SO_{q}(4)/SO_{q}(2))$ obtained in \cite{BhuBisSau-2024aaa}.\\ 
	\subsection{The Irreducible Representations of $C(SO_{q}(4)/SO_{q}(2))$}
	In this subsection, we enlist all the irreducible representations of $C(SO_{q}(4)/SO_{q}(2))$ (see \cite{BhuBisSau-2024aaa}). Consider the Weyl group $W=\left\langle \omega_1,\omega_2\,\vert\, \omega_1^2= \omega_2^2=1,\, \omega_1 \omega_2= \omega_2 \omega_1\right\rangle$. Given $t_0\in\mathbb{T}$ and $\omega\in W$,  we have the irreducible representations $\Pi_{(\omega,t_0)}$ as listed below.
	\begin{enumerate}[(1)]
		\item $\begin{aligned}[t]\Pi_{(\omega_1 \omega_2,t_0)}:&C(SO_{q}(4)/SO_{q}(2))\rightarrow\cll(\ell^2(\bbn)\otimes \ell^2(\bbn)),\\&s_1^0\mapsto t_0(S^*\otimes S^*), \,s_2^0\mapsto t_0(S^*\otimes p),\,s_3^0\mapsto t_0(p\otimes S^*),\,s_4^0\mapsto t_0(p\otimes p).\end{aligned}$
		\item $\Pi_{(\omega_1,t_0)}:C(SO_{0}(4)/SO_{0}(2))\rightarrow \cll(\ell^2(\bbn)),\quad s_1^0\mapsto t_0S^*, \,s_2^0\mapsto 0,\,s_3^0\mapsto t_0p,\,s_4^0\mapsto 0.$
		\item $\Pi_{(\omega_2,t_0)}:C(SO_{0}(4)/SO_{0}(2))\rightarrow \cll(\ell^2(\bbn)),\quad s_1^0\mapsto t_0S^*, \,s_2^0\mapsto t_0p,\,s_3^0\mapsto 0,\,s_4^0\mapsto 0.$
		\item $\Pi_{(1,t_0)}:C(SO_{0}(4)/SO_{0}(2))\rightarrow \cll(\bbc),\quad s_1^0\mapsto t_0, \,s_2^0\mapsto 0,\,s_3^0\mapsto 0,\,s_4^0\mapsto 0.$
		
	\end{enumerate}
	
	\subsection{Induced Representation}
	The purpose of this subsection is to show the equivalence of irreducible representations of $C(SO_{q}(4)/SO_{q}(2))$ and the irreducible representations induced from the isotropy groups of $\clg_{\mathrm{tight}}$.
	\begin{lmma}
		The orbits of the unit space $\clg_{\mathrm{tight}}^0$ are locally closed.
	\end{lmma}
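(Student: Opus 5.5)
We identify $\clg_{\mathrm{tight}}^0$ with $\overline{\bbn}^2$ via the homeomorphism $\phi$ of Proposition~\ref{Homeomorphism}; the identification $\clg_{\mathrm{tight}}^0\cong\widehat{E}_{\infty}$ comes from Lemma~\ref{Unitspace}. By Lemma~\ref{rmrkunitspace}, together with the fibre computations in Lemma~\ref{fibres 1}, the orbits are exactly
\[
O_1=\bbn^2,\quad O_2=\bbn\times\{\infty\},\quad O_3=\{\infty\}\times\bbn,\quad O_4=\{(\infty,\infty)\}.
\]
Recall that a subset is locally closed if it is open in its closure, or equivalently if it is the intersection of an open set and a closed set. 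The plan is to exhibit such a decomposition for each orbit explicitly, using only the product topology on $\overline{\bbn}^2$.

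We handle the orbits in order.
\begin{enumerate}[(i)]
\item $O_1=\bbn\times\bbn$ is open, since $\bbn$ is open in $\overline{\bbn}$ (its points are isolated). An open set is locally closed, taking the whole space as the closed set.
\item $O_4=\{(\infty,\infty)\}$ is closed, since $\overline{\bbn}^2$ is Hausdorff, and is therefore locally closed.
\item $O_2=\bbn\times\{\infty\}$ equals $(\bbn\times\overline{\bbn})\cap(\overline{\bbn}\times\{\infty\})$. The first factor is open and the second is closed, so $O_2$ is locally closed. Equivalently, $\overline{O_2}=\overline{\bbn}\times\{\infty\}$ and $O_2=\overline{O_2}\setminus\{(\infty,\infty)\}$ is open in it.
\item $O_3$ is handled symmetrically.
\end{enumerate}

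The only point that needs care is confirming that the orbits are exactly these four sets, so that the invariant pieces are not finer or coarser than claimed.
\begin{itemize}
\item Orbits cannot merge. By Lemma~\ref{Size of X} and Lemma~\ref{rmrkunitspace}, each element of $T$ acting on $\phi(\bm{k})$ keeps $\bm{k}$ in the same stratum: $B_4$ acts only on $\bbn^2$, $B_3$ only moves points within $\bbn\times\{\infty\}$, and so on.
\item Each stratum is a single orbit, not a union of several. On $O_1$, elements $B_4(r,\bm{k},\bm{n})$ carry $\phi(\bm{k})$ to $\phi(\bm{n})$, so $\bbn^2$ is transitive. On $O_2$, $B_3(r,(k_1,n_2),\bm m)$ carries $\phi(k_1,\infty)$ to $\phi(m_1,\infty)$, and the constraints on $m_1$ allow any value. $O_3$ is analogous.
\end{itemize}
This transitivity was already used in the proof of Proposition~\ref{g_amenable}, so I would cite it from there rather than redo it.

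Once the orbits are established, local closedness is the elementary topological check in items (i)--(iv).
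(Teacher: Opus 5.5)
Your proposal is correct and takes the same route as the paper: the paper reads off the four orbits $\{(\infty,\infty)\}$, $\bbn\times\{\infty\}$, $\{\infty\}\times\bbn$, $\bbn\times\bbn$ from Lemma~\ref{rmrkunitspace} and declares local closedness immediate, whereas you write out the open-intersect-closed decompositions explicitly. One wording fix: elements of type $B_3$ (and likewise $B_1$, $B_2$) also act on points of $\bbn^2$, not only on $\bbn\times\{\infty\}$, but by Lemma~\ref{FiniteFinite}(i) those germs coincide with $B_4$-germs, so each of the four sets is still invariant.
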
\begin{proof}Recall, from Lemma \ref{rmrkunitspace}, the orbits of $\clg_{\mathrm{tight}}^0$ are 
		\begin{align*}
			\clg_{\mathrm{tight}}^0&= \clg_1^0\sqcup\clg_2^0\sqcup\clg_3^0\sqcup\clg_4^0\\
			&\cong\{(\infty,\infty)\}\bigsqcup\bbn\times\{\infty\}\bigsqcup\{\infty\}\times\bbn\bigsqcup\bbn\times\bbn.
		\end{align*} Hence, the lemma immediately follows.\end{proof}
\begin{lmma}
	The irreducible faithful $\ast$-representations of $C^{*}(\mathbb{Z})$ are of the form $$L_{t}(f)=\sum_{n\in\mathbb{Z}}f(n)t^{n},\quad t\in\bbbt,\,f\in C_{c}(\mathbb{Z}).$$
\end{lmma}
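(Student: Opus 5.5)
The argument rests on two facts. First, $C^*(\mathbb{Z})$ is commutative. Second, the irreducible representations of a commutative $C^*$-algebra are one-dimensional and are exactly its characters. The statement says "faithful", but the intended content is clearly that every irreducible $\ast$-representation has this form. Faithfulness cannot hold literally, because a one-dimensional representation of $C^*(\mathbb{Z})\cong C(\mathbb{T})$ is never injective. I would prove the classification statement and note this point.

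\emph{Step 1: reduce to dimension one.} $\mathbb{Z}$ is an étale groupoid with unit space a point, and its Haar system is counting measure. The convolution on $C_c(\mathbb{Z})$ is therefore the usual one, $(f\ast g)(n)=\sum_k f(k)g(n-k)$, which is commutative. Hence $C^*(\mathbb{Z})$ is a commutative unital $C^*$-algebra with unit $\delta_0$. Let $L$ be an irreducible representation. Each $L(f)$ commutes with $L(C^*(\mathbb{Z}))$, so by Schur's lemma $L(f)$ is a scalar. Irreducibility then forces $\dim\scrh_L=1$, and $L$ is a nonzero character.

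\emph{Step 2: identify the character.} Put $u=L(\delta_1)\in\mathbb{C}$. Since $\delta_1^*\ast\delta_1=\delta_0$ is the unit, we get $|u|^2=1$, so $u=t\in\bbbt$. Since $\delta_n=\delta_1^{\ast n}$ for all $n\in\mathbb{Z}$, we get $L(\delta_n)=t^n$. Any $f\in C_c(\mathbb{Z})$ is a finite sum $f=\sum_n f(n)\delta_n$, so $L(f)=\sum_n f(n)t^n$. By continuity this determines $L$ on all of $C^*(\mathbb{Z})$.

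\emph{Step 3: the converse.} For each $t\in\bbbt$, the formula $L_t$ is a $\ast$-homomorphism on $C_c(\mathbb{Z})$; this is the routine computation $L_t(f\ast g)=L_t(f)L_t(g)$ and $L_t(f^*)=\overline{L_t(f)}$. It is $I$-norm bounded, because $|L_t(f)|\le\sum_n|f(n)|=\|f\|_I$. So it extends to $C^*(\mathbb{Z})$, and it is irreducible because it is one-dimensional. Distinct values of $t$ give inequivalent representations, since they differ on $\delta_1$.

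The only delicate point is the mismatch with the word "faithful" in the statement, which I would address by the remark above rather than attempt to prove.
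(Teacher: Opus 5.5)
Your proof is correct and is essentially the paper's argument. The paper invokes the bijection between representations of $C^*(\mathbb{Z})$ and unitary representations of $\mathbb{Z}$ and reads off $t$ as the image of the generator $1$, which is your Step 2 with $t=L(\delta_1)$; your Schur's-lemma reduction to dimension one supplies the step the paper leaves implicit when it takes the unitary representation to be $\mathbb{T}$-valued. Your remark on ``faithful'' is also right, since $L_t(\delta_0-\bar t\,\delta_1)=0$ shows no $L_t$ is injective, and the paper's proof likewise establishes only the classification of irreducible representations, not faithfulness.
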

\begin{proof}
	There is a bijection between the representations of $C^{*}(\mathbb{Z})$ and the unitary representations of $\mathbb{Z}$. Each unitary representation $U_{t}$ of $\mathbb{Z}$ is determined by a parameter $t\in\mathbb{T}$ as $$U_{t}:\mathbb{Z}\longrightarrow \mathbb{T};\,\, 1\mapsto t. $$ Thus, the representations $L_{t}$ of $C^{*}(\mathbb{Z})$ on a Hilbert space $\scrh_{L}$ over $\mathbb{C}$ are  $$L_{t}(f)=\sum_{n\in\mathbb{Z}}f(n)U_{t}(n)=\sum_{n\in\mathbb{Z}}f(n)t^{n},\quad f\in C_{c}(\mathbb{Z}).$$ \end{proof}
\noindent Given the isotropy groups $\clk_i\coloneq\clg_{\mathrm{tight},u}^u\cong\mathbb{Z}$, where $u\in \clg^{0}_i,\,i=1,2,3,4$ (see Lemma \ref{fibres 1}), the irreducible faithful $\ast$-representations of $C^{*}(\clk)$ are thus given by $L_{t}$.  For simplicity, we denote $L_{t}$ by $L$.
\begin{thm}
	The representations $\operatorname{Ind}_{\clk_i}^{\mathcal{G}}L:C^*(\clg_{\textrm{tight}})\rightarrow \mathcal{L}(\scrh_{\operatorname{Ind}L}^i)$ induced by the isotropy groups $\clk_i\coloneq\clg_{\mathrm{tight},u}^u$, for $u\in \clg^{0}_i, \,i\in\{1,2,3,4\}$, are as described below.
\begin{enumerate}[(i)] \item For $u\in\clg_1^{0}=\{\phi(\infty,\infty)\}$,
	\begin{enumerate}[a)]
		\item $\{e_r:r\in\bbz\}$ forms a basis for $\scrh_{\operatorname{Ind}L}^1$ and the inner product is $\left\langle e_{{r_{1}}},e_{{r_{2}}}\right\rangle=t_0^{r_{2}-r_{1}}$,
		\item $\scrh_{\operatorname{Ind}L}^1\cong\mathbb{C}$ with the isomorphism $e_{{r}}\mapsto t_0^{-r}$,
		\item $\operatorname{Ind}_{\clk_{1}}^{\mathcal{G}}L\cong\Pi_{(1,t_0)}$ as a representation on $\scrh^1_{\operatorname{Ind} L}$.
	\end{enumerate}
	\item For $u\in\clg_2^{0}=\{\phi(\infty,k_{2}):\,k_{2}\in\bbn\}$,
	\begin{enumerate}[a)]
		\item given $\bm{s}=(s_1,s_2)\in\bbn\times\bbz$, $\{e_{\bm{s}}:s_{2}=0\}$ forms a basis for $\scrh_{\operatorname{Ind}L}^2$, and the inner product is $\langle e_{\bm{s}},e_{\bm{u}}\rangle=\bbone_{s_{1}-u_{1}}t^{u_{2}-s_{2}}$,
		\item $\scrh^2_{\operatorname{Ind}L}\cong\mathcal{L}(\ell^2(\bbn))$ with the isomorphism $e_{\bm{s}}\mapsto t_0^{-s_{2}}e_{s_{1}}$,
		\item $\operatorname{Ind}_{\clk_{2}}^{\mathcal{G}}L\cong\Pi_{(\omega_{2},t_0)}$ as a representation on $\scrh^2_{\operatorname{Ind}L}$.
	\end{enumerate}
	\item For $u\in\clg_3^0=\{\phi(k_1,\infty):k_1\in\bbn\}$,	
	\begin{enumerate}[a)]
		\item given $\bm{s}=(s_1,s_2)\in\bbn\times\bbz$, $\{e_{\bm{s}}:s_{2}=0\}$ forms a basis for $\scrh_{\operatorname{Ind}L}^3$, and the inner product is $\langle e_{\bm{s}},e_{\bm{u}}\rangle=\bbone_{s_{1}-u_{1}}t^{u_{2}-s_{2}}$,
		\item $\scrh^3_{\operatorname{Ind}L}\cong\mathcal{L}(\ell^2(\bbn))$ with the isomorphism $e_{\bm{s}}\mapsto t_0^{-s_{2}}e_{s_{1}}$,
		\item $\operatorname{Ind}_{\clk_{3}}^{\mathcal{G}}L\cong\Pi_{(\omega_{1},t_0)}$ as a representation on $\scrh^3_{\operatorname{Ind}L}$.
	\end{enumerate}
	\item For $u\in\clg_4^{0}=\{\phi(\bm{k}):\,\bm{k}\in\bbn^2\}$,
		\begin{enumerate}[a)]
		\item given $\bm{s}\in\bbn\times\bbn\times\bbz$, $\{e_{\bm{s}}:s_{3}=0\}$ forms a basis for $\scrh_{\operatorname{Ind}L}^4$, and the inner product is 		$\left\langle e_{\bm{s}},e_{\bm{u}}\right\rangle=\bbone_{(u_{1},u_{2})-(s_{1},s_{2})}t_0^{u_{3}-s_{3}}$,
		\item $\scrh^4_{\operatorname{Ind} L}\cong \ell^{2}(\mathbb{N})\otimes\ell^{2}(\mathbb{N})$ with the isomorphism $e_{\bm{s}}\mapsto t_0^{-s_3}(e_{s_{1}}\otimes e_{s_{2}})$,
		\item $\operatorname{Ind}^\clg_{\clk_4}L\cong\Pi_{(\omega_1\omega_2,t_0)}$ as a representation on $\scrh^4_{\operatorname{Ind} L}$.
	\end{enumerate}	
	
\end{enumerate}
\end{thm}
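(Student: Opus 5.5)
For each orbit $\clg_i^0$ I will fix a base point $u$: $u=\phi(\infty,\infty)$, $\phi(\infty,0)$, $\phi(0,\infty)$ and $\phi(0,0)$ respectively. Part (ii) of the induction theorem of Section~\ref{preliminaries} shows that this choice does not affect the induced representation up to equivalence. I then compute the module $C_c(\clg_{\clk_i^0})\otimes_{C^*(\clk_i)}\scrh_L$ concretely. Since $\clk_i^0=\{u\}$, we have $\clg_{\clk_i^0}=\clg_{\mathrm{tight},u}$, the source fibre listed in Lemma~\ref{fibres 1}(b). It is discrete and in bijection with $\{\ast\}\times\bbz$, $\bbn\times\bbz$, $\bbn\times\bbz$ and $\bbn\times\bbn\times\bbz$. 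The $\bbz$-coordinate is the label $r$ of $B_i(r,\cdot,\cdot)$, and the remaining coordinates record the range $\phi(\cdot)$.

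Take $\scrh_L=\bbc$ with $L=L_{t_0}$, and let $e_{\bm s}$ denote $\delta_{\gamma_{\bm s}}\otimes 1$. Equation~(\ref{pre ip}) gives
\[
\langle\delta_\gamma,\delta_{\gamma'}\rangle_*(k)=\bbone\{\gamma k=\gamma'\},\qquad k\in\clk_i.
\]
This is nonzero only when $\gamma$ and $\gamma'$ have the same range, and then $k=\gamma^{-1}\gamma'$. Applying $L_{t_0}$ turns this into $t_0^{n(k)}$, where $n(k)\in\bbz$ is the isotropy label of Lemma~\ref{fibres 1}(c). Using Lemma~\ref{FiniteFinite} to compute the composition $\gamma^{-1}\gamma'$, I expect $n(k)$ to be the difference of the $\bbz$-coordinates. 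This yields the inner products in (a): $\langle e_{\bm s},e_{\bm u}\rangle$ equals the indicator that the $\bbn$-coordinates agree, times $t_0^{u_{\mathrm{last}}-s_{\mathrm{last}}}$.

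Next I form the quotient by the null space. The vectors $e_{\bm s}$ and $t_0^{s_{\mathrm{last}}}e_{(\bm s',0)}$ differ by a null vector when their $\bbn$-parts agree. So $\{e_{\bm s}\}$ with last coordinate $0$ gives an orthonormal basis, and $e_{\bm s}\mapsto t_0^{-s_{\mathrm{last}}}e_{\text{$\bbn$-part}}$ is a well-defined unitary onto $\bbc$, $\ell^2(\bbn)$, $\ell^2(\bbn)$ and $\ell^2(\bbn)\otimes\ell^2(\bbn)$ respectively. This proves (b). The sign convention for $t_0^{\pm}$ has to be fixed consistently with how $L_{t_0}$ evaluates $\delta_k$; I will just track it carefully.

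For (c) it suffices to compute $\operatorname{Ind}L(a_j)$ on the generators $a_j=\bbone_{\Theta_{s_j^0}}$, because these generate $C^*(\clg_{\mathrm{tight}})$ by Theorem~\ref{the isomism}. Since $a_j*\delta_\gamma=\delta_{[x,s_j^0]\gamma}$ whenever $r(\gamma)\in D_{s_j^0}$ and is $0$ otherwise, Lemma~\ref{Lemma 1} identifies $D_{s_j^0}$ on each orbit. On $\phi(\infty,\infty)$ only $s_1^0$ is defined. On $\bbn\times\{\infty\}$ or $\{\infty\}\times\bbn$, exactly one of $s_2^0,s_3^0$ acts, and only at the boundary coordinate $0$. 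On $\bbn^2$ all four generators act. Lemma~\ref{rmrkunitspace} and Lemma~\ref{pre lemma} give the shift of ranges, and each multiplication raises the $\bbz$-label by one. After the unitary of (b) this becomes multiplication by $t_0$ times $S^*$, $p$ or $1$ in the appropriate tensor factor, which recovers $\Pi_{(1,t_0)}$, $\Pi_{(\omega_2,t_0)}$, $\Pi_{(\omega_1,t_0)}$ and $\Pi_{(\omega_1\omega_2,t_0)}$.

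I expect the main obstacle to be bookkeeping rather than anything conceptual. I have to compute $\gamma^{-1}\gamma'$ and $[x,s_j^0]\gamma$ inside the equivalence classes of Lemma~\ref{FiniteFinite}, and I have to verify that the resulting $\bbz$-labels match the isotropy parametrisations of Lemma~\ref{fibres 1}(c) with consistent signs. I will also check that on orbits 2 and 3 the correct generator ($s_2^0$ versus $s_3^0$) survives, so that the pairing with $\omega_2$ versus $\omega_1$ comes out as stated. I will handle this by working first at the base point $k=0$ and reducing everything to products in $T$ computed with the formula in Proposition~\ref{Semigroup U}.
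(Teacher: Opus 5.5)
Your plan is the paper's proof. You identify $\clg_{\clk_i^0}$ with the source fibre $\clg_{\mathrm{tight},u}$, compute $\langle\delta_\gamma,\delta_{\gamma'}\rangle_*=\delta_{\gamma^{-1}\gamma'}$ (zero unless $r(\gamma)=r(\gamma')$), apply $L_{t_0}$, quotient by null vectors (the paper phrases this via a singular Gram matrix), and compute $a_j*\delta_\gamma$ on the four generators.

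The only structural difference is that you work at one base point per orbit and transfer, whereas the paper computes at an arbitrary $u$; the computation is verbatim the same. If you keep the transfer, note that part (ii) as quoted only gives a bijection of equivalence classes. You must add that the $\bbz$-label (the exponent of $\bm t$) is additive under composition. Conjugation by an arrow then preserves it, so $L_{t_0}$ at one point corresponds to $L_{t_0}$ at the other.

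One step is misstated, and taken literally it gives the wrong operators. In $(a_j*\delta_\gamma)(\beta)=\sum_{\eta\in\clg^{r(\beta)}}a_j(\eta)\delta_\gamma(\eta^{-1}\beta)$, the arrow $\eta=[(x,s_j^0)]$ must satisfy $s(\eta)=x\cdot s_j^0=r(\gamma)$. Hence $a_j*\delta_\gamma\neq0$ iff $r(\gamma)\in R_{s_j^0}=D_{s_j^{0*}}$, and the new range is $x=r(\gamma)\cdot s_j^{0*}$. The condition "$\in D_{s_j^0}$" applies to the \emph{output} point $\beta=\eta\gamma$, which is how the paper indexes the sum.

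This matters. Since $s_1^{0*}s_1^0=1$, $R_{s_1^0}$ is all of $\widehat{E}_{\mathrm{tight}}$, while $D_{s_1^0}$ omits the points with a zero coordinate. With your condition, $a_1$ would annihilate vectors whose range has a zero coordinate, which is incompatible with the isometry $t_0S^*$.

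For the same reason, do not take the shift from Lemma~\ref{pre lemma}(i) at face value. With $(x\cdot s)(e)=x(ses^*)$ one gets $\phi(\bm k)\cdot s_1^0=\phi(k_1-1,k_2-1)$, as the paper itself uses in case (iii). The point $\phi(k_1+1,k_2+1)$ is $\phi(\bm k)\cdot s_1^{0*}$, which is the range of $a_1*\delta_\gamma$.

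Two smaller bookkeeping points. First, your null relation $e_{\bm s}\sim t_0^{s_{\mathrm{last}}}e_{(\bm s',0)}$ and your unitary $e_{\bm s}\mapsto t_0^{-s_{\mathrm{last}}}e_{\bm s'}$ carry opposite exponents. Fix the linearity convention of the inner product once and use the same exponent in both.

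Second, your planned check of $s_2^0$ versus $s_3^0$ is warranted. With the literal formulas at the start of Section~\ref{sect_gpoid}, where $s_2^0=-p\otimes S^*\otimes\bm t$, the pairing of the orbits $\{\infty\}\times\bbn$ and $\bbn\times\{\infty\}$ with $\omega_2,\omega_1$ comes out swapped. The statement holds with the convention used for $B_2,B_3$ and in the list of $\Pi_{(\omega,t_0)}$, where $s_2^0$ carries $p$ in the second tensor factor.
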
	
\begin{proof}
The representation $\operatorname{Ind}_{\clk_i}^{\clg}L$ acts on $\scrh_{\operatorname{Ind} L}^i$, which is the norm-completion of $C_{c}(\mathcal{G}_{\clk_i^{0}})\otimes\scrh_{L}$, where  $\scrh_{L}\cong\bbc$. Thus, $C_{c}(\mathcal{G}_{\clk_i^{0}})\otimes\scrh_{L}\cong C_{c}(\mathcal{G}_{\clk_i^{0}})$. Since $\clk_i=\clg_{\mathrm{tight},u}^u$, we have $\mathcal{G}_{\clk_i^{0}}=s^{-1}(\clk_i)=\mathcal{G}_{\mathrm{tight},u}$. Since $\clk_i$ is discrete, equation (\ref{pre ip}) simplifies to $$\langle \phi,\psi\rangle_{*}(k)=\sum_{y\in\mathcal{G}_{\mathrm{tight},r(k)}}\overline{\phi(y)}\psi(yk),\quad \phi,\psi\in C_{c}(\mathcal{G}_{\clk^{0}})\,,\,k\in\clk,$$ and the inner product on $\scrh_{\operatorname{Ind} L}^i$ (see equation (\ref{ip ind})) becomes
	\begin{equation}
		\langle \phi,\psi\rangle=L(\langle \phi,\psi\rangle_{*}),\quad \phi,\psi\in C_{c}(\mathcal{G}_{\mathrm{tight},u}).
	\end{equation}
	Also, in equation (\ref{ind rep}), it suffices to consider the action on generators $a_j$ of $C_{c}(\mathcal{G}_{\mathrm{tight}})$ (as in the proof of Theorem \ref{the isomism}). Hence, 
	\begin{equation}\label{ind 1}
		\operatorname{Ind}_{\clk_i}^{\mathcal{G}}L(a_j)(\phi)=a_j\ast\phi,\quad a_j\in C_{c}(\mathcal{G}_{\mathrm{tight}})\,,\,\phi\in C_{c}(\mathcal{G}_{\mathrm{tight},u}),\,j\in\{1,2,3,4\}.
	\end{equation}
\begin{enumerate}[(i)] \item Consider $u\in\clg_1^{0}=\{\phi(\infty,\infty)\}$. From Lemma \ref{fibres 1},
	$$\clk_{1}:=\clg_{\mathrm{tight},u}=\left\{\left[\phi(\infty,\infty),B_1(r, \bm{0}, (r,r)) \right] :r\in\bbz_+\right\}\cup\left\{\left[\phi(\infty,\infty),B_1(r, (-r,-r),\bm{0}) \right] :r\in\bbz_-\right\}.$$
	We can treat this set as $\bbz$ with the canonical map. Denote the elements of this set by $\sigma_r$. Let $e_{\sigma_{r_1}}, e_{\sigma_{r_2}}$ be the standard basis elements of $C_c(\clg_{\mathrm{tight},u})\cong C_c(\bbz)$. First note that, for $\sigma_{r_{3}}\in \clk_{1}$, $\clg_{\mathrm{tight},r(\sigma_{r_3})}=\{[\phi(\infty,\infty),s]\in\clg_{\mathrm{tight}}\}$. Thus,
	$$\langle e_{\sigma_{r_{1}}},e_{\sigma_{r_{2}}}\rangle_{*}(\sigma_{r_{3}})=\sum_{y\in\mathcal{G}_{\mathrm{tight},r(h)}}\overline{e_{\sigma_{r_{1}}}(y)}e_{\sigma_{r_{2}}}(y\sigma_{r_{3}}).$$
	\noindent Since the only non-trivial case is that of  $y=\sigma_{r_{1}}$, we obtain $\langle e_{\sigma_{r_{1}}},e_{\sigma_{r_{2}}}\rangle_{*}(\sigma_{r_{3}})=e_{\sigma_{r_{2}}}(\sigma_{r_{1}}\sigma_{r_{3}})$. A direct verification shows that $\sigma_{r_{1}}\sigma_{r_{3}}=\sigma_{r_{1}+r_{3}}$ for   $r_{1}\geq r_{3},|r_{1}|\geq |r_{3}|$. Therefore, $\langle e_{\sigma_{r_{1}}},e_{\sigma_{r_{2}}}\rangle_{*}(h)= \bbone_{\sigma_{r_{3}}-\sigma_{r_{2}-r_{1}}}$. This implies that $\langle e_{\sigma_{r_{1}}},e_{\sigma_{r_{2}}}\rangle_{*}=e_{\sigma_{r_{2}-r_{1}}}$. The inner product on $\scrh^1_{\operatorname{Ind} L}$ now becomes
	$$
	\left\langle e_{\sigma_{r_{1}}},e_{\sigma_{r_{2}}}\right\rangle=L(\langle e_{\sigma_{r_{1}}},e_{\sigma_{r_{2}}}\rangle_{*})
	=\sum_{n\in\mathbb{Z}}e_{\sigma_{r_{2}-r_{1}}}(\sigma_n)t_0^{n}
	=t_0^{r_{2}-r_{1}},
	$$

	\noindent since only $n=r_2-r_1$ survives. The Gram matrix of $(e_{\sigma_{r_{1}}}, e_{\sigma_{r_{2}}})$ is 
	$$\begin{pmatrix}
		1 &t_0^{r_{2}-r_{1}}\\
		t_0^{r_{1}-r_{2}} &1
	\end{pmatrix},$$
	which is not invertible. Therefore, the basis vectors $e_{\sigma_{r_{1}}}$ and $e_{\sigma_{r_{2}}}$ are linearly dependent and $\scrh_{\operatorname{Ind}L}^1$ has a single basis element. Thus, $\scrh_{\operatorname{Ind}L}^1\cong\mathbb{C}$ with the isomorphism $e_{\sigma_{r_{1}}}\mapsto t_0^{-r_{1}}$. 
	
We now explicitly verify the representation $\operatorname{Ind}_{\clk_{1}}^{\mathcal{G}}L$ from equation (\ref{ind 1}).	For the generator $a_1$, $$a_{1}\ast e_{\sigma_{r_{0}}}(\sigma_{r_{1}})=\sum_{\eta\in\mathcal{G}_{\mathrm{tight}}^{ r(\sigma_{r_{1}})}}a_{1}(\eta) e_{\sigma_{r_{0}}}(\eta^{-1}\sigma_{r_{1}}).$$ Note that $\mathcal{G}_{\mathrm{tight}}^{r(\sigma_{r_{1}})}=\{[(\phi(\infty,\infty),B_{1}(m-n,(n,n)(m,m)))]:m,n\in\mathbb{N}\}$. For $a_1(\eta)\neq0$, we get from equation (\ref{ALambdaK}), that $\eta\in\Theta_{s_1^0}=\{[\phi(j),s_1^0]:j\in\overline{\bbn}^2,\,j_1,j_2\geq1\}$. The only non-trivial contribution comes from the case  $\eta=[(\phi(\infty,\infty),s_{1}^{0})]$. Then,
	$$
		a_{1}\ast e_{\sigma_{r_{0}}}(\sigma_{r_{1}})=e_{\sigma_{r_{0}}}\left(\left[\phi(\infty,\infty),s_{1}^{0}\right]^{-1}\sigma_{r_{1}}\right)=e_{\sigma_{r_{0}}}(\sigma_{r_1+1}).
	$$	
	Thus, $a_{1}\ast e_{\sigma_{r_{0}}}=e_{\sigma_{r_{0}-1}}$ and $\operatorname{Ind}_{\clk_{1}}^{\mathcal{G}}L(a_{1})(e_{\sigma_{r_{0}}})=e_{\sigma_{r_{0}-1}}$ as a representation on $\clh^1_{\operatorname{Ind}L}$. Equivalently, as a representation on $\bbc$, $\operatorname{Ind}_{\clk_{1}}^{\mathcal{G}}L(a_{1})(t_0^{-r_0})=t_0^{-r_0+1}$. Thus, $\operatorname{Ind}_{\clk_{1}}^{\mathcal{G}}L(a_{1})=t_0$. For the generators $a_j$, $j\in\{2,3,4\}$, one can verify that $\clg_{\mathrm{tight}}^{r(\sigma_{r_1})}\cap \Theta_{s_j^0}=\emptyset$. Hence $\operatorname{Ind}^\clg_{\clk_1}L(a_j)=0$.
\item  The case $u\in\clg_2^{0}=\{\phi(\infty,k_{2}):\,k_{2}\in\bbn\}$ is analogous to the next case.
\item For $u\in\clg_3^0=\{\phi(k_1,\infty):k_1\in\bbn\}$ we have,
	{\small$$\clk_{3}=\{[\phi(k_{1},\infty),B_3(r, (k_{1},2k_{1}), (k_{1},r+2k_{1}))] :r\in \bbz_+\}\cup\{[\phi(k_{1},\infty),B_3(k_{1},2k_{1}-r),(k_{1},2k_{1})] :r\in\bbz_-\},$$} which we treat as $\mathbb{Z}$ as before, and  
	\begin{align*}
		\clg_{\mathrm{tight},u}=&\left\{\left[\phi(s_{1},\infty),B_3(s_2, (s_{1},s_{1}+k_{1}), (k_{1},s_{2}+s_{1}+k_{1}))\right] :\bm{s}=(s_{1},s_2)\in\bbn\times\bbz_+\right\}\\
		&\cup\{[\phi(s_{1},\infty),B_3(s_2,(s_{1},s_{1}+k_{1}-s_{2})),(k_{1},s_{1}+k_{1})] :\bm{s}=(s_{1},s_2)\in\bbn\times\bbz_-\},
	\end{align*}
	which we treat as $\mathbb{N}\times \mathbb{Z}$ as before. Denote the elements of $\clk_3$ and $\clg_{\mathrm{tight},u}$ as $\sigma_r$ and $\rho_{\bm{s}}$, respectively.
	The inner product on $C_{c}(\mathcal{G}_{\mathrm{tight},u})$ is given by 
	$\langle e_{\rho_{\bm{s}}},e_{\rho_{\bm{v}}}\rangle_{\ast}=e_{\rho_{\bm{v}}}(\rho_{\left(s_{1},s_{2}+r\right)}),$ using the observation $\rho_{\bm{s}}\sigma_{r}=\rho_{\left(s_{1},s_{2}+r\right)}.$
This implies that $\langle e_{\rho_{\bm{s}}},e_{\rho_{\bm{v}}}\rangle_{\ast}=\bbone_{ s_{1}-v_{1}} e_{\sigma_{v_{2}-s_{2}}}$ is the inner product on $C_{c}(\mathcal{G}_{\mathrm{tight},u})$. 

We now verify the representation $\operatorname{Ind}_{\clk_{3}}^{\mathcal{G}}L$.
	\begin{itemize}
		\item Computations similar to before yield that in the expression for $a_{1}\ast e_{\rho_{s}}(\rho_{\bm v})$, $\eta=\left[\phi(v_{1},\infty),s_{1}^{0}\right]$ for $v_{1}\geq1$, while no such $\eta$ exists for $v_{1}<1$, Since the computation for the other cases will follow similarly, we consider  $v_{2}\geq 0$. Using $v_{1}\geq 1$ and $\phi(v_{1},\infty)\cdot s_{1}^{0}=\phi(v_{1}-1,\infty)$, we get $a_{1}\ast e_{\rho_{\bm{s}}}(\rho_{\bm{v}})=e_{\rho_{s}}\left(\rho_{(v_{1}-1,v_{2}+1)}\right)$. Hence, $a_{1}\ast e_{\rho_{\bm{s}}}=e_{\rho_{\left(s_{1}+1,s_{2}-1\right)}}$. Therefore, $$\operatorname{Ind}_{\clk_{3}}^{\mathcal{G}}L(a_{1})\left(e_{\rho_{(s_{1},0)}}\right)=a_{1}\ast e_{\rho_{(s_{1},0)}}=e_{\rho_{\left(s_{1}+1,-1\right)}}.$$
		This implies $\operatorname{Ind}_{\clk_{3}}^{\mathcal{G}}L(a_{1})(e_{s_{1}})=t_{0}(e_{s_{1}+1})$. Thus, $\operatorname{Ind}_{\clk_{3}}^{\mathcal{G}}L(a_{1})=t_{0}S^{*}$.
		\item Since $\Theta_{s_{1}^{0}}\cap\mathcal{G}_{\mathrm{tight}}^{r(\rho_{v})}=\emptyset$, no such $\eta$ exists and $\operatorname{Ind}_{\clk_{3}}^{\mathcal{G}}L(a_{j})=0$ for $j\in\{2,4\}$.
		\item For $v_{1}=0, \eta=\left[\phi(0,\infty),s_{3}^{0}\right]$, and no such $\eta$ exists otherwise . Assume $v_{2}\geq 0$.  The equivalence relations from Lemma \ref{FiniteFinite} yield $a_{3}\ast e_{\rho_{\bm s}}(\rho_{v})=\bbone_{v_{1}}e_{\rho_{\bm s}}(\rho_{\left(0,v_{2}+1\right)}).$ This implies, $a_{3}\ast e_{\rho_{\bm{s}}}=\bbone_{s_{1}}e_{\rho_{\left(0,s_{2}-1\right)}}$ and hence,
		$\operatorname{Ind}_{\clk_{3}}^{\mathcal{G}}L(a_{3})\left(e_{(s_{1},0)}\right)=\bbone_{s_1}e_{\rho_{(0,-1)}}.$
		Thus, $\operatorname{Ind}_{\clk_{3}}^{\mathcal{G}}L(a_{3})=t_{0}p$.
	\end{itemize}
\item Consider $u\in\clg_4^{0}=\{\phi(\bm{k}):\,\bm{k}\in\bbn^2\}$. In this case, we have
	\begin{align*}\mathcal{G}_{\mathrm{tight},\bm{v}}&=\{[(\phi(s_1,s_2),B_{4}(s_3,(s_1,s_2),\bm{k}))]:\bm{s}=(s_1,s_2,s_3)\in \mathbb{N}\times\mathbb{N}\times\mathbb{Z}\},\\
		\clk_{4}&=\{[(\phi(k),B_{4}(r,\bm{k},\bm{k}))]:r\in\mathbb{Z}\}.\end{align*}
	Denote the elements of $\clg_{\mathrm{tight},u}$ and $\clk_4$ by $\rho_s$ and $\sigma_r$ respectively. Upon computing as before, we have $$
	\langle e_{\rho_{\bm{s}}},e_{\rho_{v}}\rangle_{*}(\sigma_{\bm{r}})=e_{\rho_{\bm{v}}}\left(\rho_{(s_{1},s_{2},r+s_{3})}\right),\quad\langle e_{\rho_{\bm{s}}},e_{\rho_{\bm{v}}}\rangle_{*}=\bbone_{(v_{1},v_{2})-(s_{1},s_{2})}e_{\sigma_{v_{3}-s_{3}}}.$$
	The result follows similarly. 
\end{enumerate}
	\end{proof}
	\textbf{Acknowledgement:} The first author gratefully acknowledges Indian Institute of Technology Gandhinagar for supporting and facilitating a major part of this manuscript. The first author also sincerely acknowledges Professor B.V.Rajarama Bhat, Indian Statistical Institute Bangalore for support through J.C.Bose fellowship during the final stages of this manuscript.

	\bigskip
	
	\bigskip
	
	\bigskip
	\bigskip

	\noindent{\sc Shreema Subhash Bhatt} (\texttt{shreemab@iitgn.ac.in},  \texttt{shreemabhatt3@gmail.com})\\
	{\footnotesize Indian Statistical Institute Bangalore, Stat. Math. Unit, R.V. College Post, Bengaluru 560059, India}\\\\
	\noindent{\sc Vinay Deshpande} (\texttt{deshpandevinay@iitgn.ac.in})\\
	{\footnotesize Department of Mathematics,  Indian Institute of Technology, Gandhinagar, Palaj, Gandhinagar 382055, India}\\\\
	\noindent{\sc Bipul Saurabh} (\texttt{bipul.saurabh@iitgn.ac.in},  \texttt{saurabhbipul2@gmail.com})\\
	{\footnotesize Department of Mathematics,  Indian Institute of Technology, Gandhinagar, Palaj, Gandhinagar 382055, India}

	\end{document}